	\newcommand{\mymathbold}{\symbf}%
	\newcommand{\mymathbold}{\bm}%
\newcommand{\scrbar}[1]{\overline{\mathcal{#1}}}
\DeclareMathOperator*{\union}{\bigcup}
\DeclareMathOperator{\E}{\mathbf{E}}
\renewcommand{\P}{\operatorname{\mymathbold{P}}}
\newcommand{\tr}{\operatorname{tr}}
\newcommand{\argmin}{\operatornamewithlimits{arg~min}}
\newcommand{\argmax}{\operatornamewithlimits{arg~max}}
\DeclarePairedDelimiter{\norm}{\lVert}{\rVert}
\DeclarePairedDelimiter{\abs}{\lvert}{\rvert}
\DeclarePairedDelimiter{\braces}{\{}{\}}
\DeclarePairedDelimiter{\parens}{(}{)}
\DeclarePairedDelimiter{\brackets}{[}{]}
\DeclarePairedDelimiterX{\ip}[2]{\langle}{\rangle}{#1,#2}
\DeclarePairedDelimiterXPP{\normsub}[2]{}{\lVert}{\rVert}{_{#2}}{#1}
\DeclarePairedDelimiterXPP{\ipsub}[3]{}{\langle}{\rangle}{_{#3}}{#1,#2}
\DeclarePairedDelimiterXPP{\ipHS}[2]{}{\langle}{\rangle}{_{\mathrm{HS}}}{#1, #2}
\DeclarePairedDelimiterXPP{\normHS}[1]{}{\lVert}{\rVert}{_{\mathrm{HS}}}{#1}
\DeclarePairedDelimiterXPP{\ipF}[2]{}{\langle}{\rangle}{_{\mathrm{F}}}{#1, #2}
\DeclarePairedDelimiterXPP{\normF}[1]{}{\lVert}{\rVert}{_{\mathrm{F}}}{#1}
\DeclarePairedDelimiterXPP{\dkl}[2]{\operatorname{D_{KL}}}{(}{)}{}{#1 \: \delimsize\Vert \: #2}
\DeclarePairedDelimiterXPP{\restr}[2]{}{{}}{\vert}{_{#2}}{#1}
\newcommand{\R}{\mathbf{R}}
\newcommand{\var}{\operatorname{var}}
\newcommand{\spn}{\operatorname{span}}
\newcommand{\normaldist}{\operatorname{\mathcal{N}}}
\newcommand{\poissondist}{\operatorname{Poisson}}
\newcommand{\given}{\:\vert\:}
\newcommand{\sign}{\operatorname{sign}}
\newcommand{\conv}{\operatorname{conv}}
\newtheorem{lemma}{Lemma}
\newtheorem{theorem}{Theorem}
\newtheorem{corollary}{Corollary}
\author{Andrew D.\ McRae \and Justin Romberg \and Mark A.\ Davenport%
}
\title{Optimal convex lifted sparse phase retrieval and PCA with an atomic matrix norm regularizer%
	\footnotetext{A.\ McRae is with the Institute of Mathematics, EPFL, Lausanne, Switzerland (e-mail: \texttt{andrew.mcrae@epfl.ch}). J.\ Romberg and M.\ Davenport are with the School of Electrical and Computer Engineering, Georgia Institute of Technology, Atlanta, Georgia, United States (e-mail: \texttt{jrom@ece.gatech.edu}, \texttt{mdav@gatech.edu}). This work was supported, in part, by NSF grants CCF-1718771 and CCF-2107455.}%
}
\DeclarePairedDelimiterXPP{\ipSigma}[2]{}{\langle}{\rangle}{_{\Sigma}}{#1,#2}
\DeclarePairedDelimiterXPP{\normSigma}[1]{}{\lVert}{\rVert}{_{\Sigma}}{#1}
\DeclarePairedDelimiterXPP{\normMixeds}[1]{}{\lVert}{\rVert}{_{*,s}}{#1}
\DeclarePairedDelimiterXPP{\normpsitwo}[1]{}{\lVert}{\rVert}{_{\psi_2}}{#1}
\newcommand{\projbp}{\scrP_{\beta^\perp}}
\newcommand{\projb}{\scrP_{\beta}}
\newcommand{\projI}{\scrP_I}
\newcommand{\projIp}{\scrP_{I^\perp}}
\newcommand{\projTpIp}{\scrP_{T^\perp \cap I^\perp}}
\newcommand{\projTIp}{\scrP_{T \cap I^\perp}}
\newcommand{\projTpI}{\scrP_{T^\perp \cap I}}
\newcommand{\projT}{\scrP_{T}}
\newcommand{\projTp}{\scrP_{T^\perp}}
\newcommand{\projTI}{\scrP_{T \cap I}}
\newcommand{\projbpI}{\scrP_{\beta^\perp \cap I}}
\theoremstyle{definition}
\newtheorem{assumption}{Assumption}
\Crefname{assumption}{Assumption}{Assumptions}
\newtheorem{remark}{Remark}
\begin{document}
	\maketitle
	
	\begin{abstract}
		We present novel analysis and algorithms for solving sparse phase retrieval and sparse principal component analysis (PCA) with convex lifted matrix formulations. The key innovation is a new mixed atomic matrix norm that, when used as regularization, promotes low-rank matrices with sparse factors. We show that convex programs with this atomic norm as a regularizer provide near-optimal sample complexity and error rate guarantees for sparse phase retrieval and sparse PCA. While we do not know how to solve the convex programs exactly with an efficient algorithm, for the phase retrieval case we carefully analyze the program and its dual and thereby derive a practical heuristic algorithm. We show empirically that this practical algorithm performs similarly to existing state-of-the-art algorithms.
	\end{abstract}
	\section{Introduction}
	\label{sec:intro}
	\subsection{Sparsity, phase retrieval, and PCA}
	\label{sec:main_contrib}
	Consider the standard linear regression problem in which we make observations of the form $y_i = \ip{x_i}{\beta^*} + \xi_i$, $i = 1, \dots, n$,
	where $\beta^* \in \R^p$ is a vector we want to estimate,
	$x_1, \dots, x_n \in \R^p$ are measurement vectors, and $\xi_1, \dots, \xi_n$ represent noise or other error.
	If the $x_i$'s are chosen randomly and independently (e.g., i.i.d.\ Gaussian),
	and the noise is zero-mean and independent with $\var(\xi_i) \leq \sigma^2$,
	it is well-known that in general, we need\footnote{Here and throughout the paper, $\lesssim$ and $\gtrsim$ denote, respectively, $\leq$ and $\geq$ within absolute constants.}
	$n \gtrsim p$ measurements to estimate $\beta^*$ meaningfully,
	and the best possible error we can obtain is $\norm{\betahat - \beta^*}_2 \lesssim \sigma \sqrt{p/n}$.
		
	We can potentially do much better if we exploit \emph{sparsity} in the vector $\beta^*$.
	If $\beta^*$ has (at most) $s$ nonzero entries, the standard LASSO algorithm, which requires solving an $\ell_1$-regularized least-squares optimization problem,
	yields an estimator $\betahat$ satisfying $\norm{\betahat - \beta^*}_2 \lesssim \sigma \sqrt{(s/n) \log (p/s)}$ as long as the number of measurements satisfies $n \gtrsim s \log (p/s)$ (see, e.g., \cite[Chapter 10]{Vershynin2018}).
	Thus by using a convex regularized optimization problem we can exploit sparsity to reduce the number of measurements $n$ and the estimation error proportionally to sparsity level (i.e., the number of nonzero entries in $\beta^*$).
	In this paper, we seek to extend this phenomenon to two problems: \emph{phase retrieval} and \emph{principal component analysis} (PCA).
	To introduce our main results, we briefly describe phase retrieval and PCA and their sparse variants.
	We focus on the formulations most relevant to our results.
	More complete background and related literature can be found in \Cref{sec:pr_intro,sec:pca_intro}.
	
	In phase retrieval, we seek to estimate a vector $\beta^*$ from $n$ noisy \emph{quadratic} observations of the form $y_i = \abs{\ip{x_i}{\beta^*}}^2 + \xi_i$.
	The nonlinearity in the measurement model makes estimation and analysis more complicated than if our measurements are linear.
	To get around this, a common approach is to note that for any $x, \beta \in \R^p$, $\abs{\ip{x}{\beta}}^2 = \ipHS{X}{B}$,
	where $X = x \otimes x$ and $B = \beta \otimes \beta$ are rank-1 positive semidefinite (PSD) matrices,
	and $\ipHS{\cdot}{\cdot}$ denotes the Hilbert-Schmidt (Frobenius) matrix inner product.
	We can then write our observations as the \emph{linear} measurements $y_i = \ipHS{X_i}{B^*} + \xi_i$,
	where $B^* = \beta^* \otimes \beta^*$ and $X_i = x_i \otimes x_i$.
	This is often called a ``lifted'' formulation, since we are mapping the parameter of interest from $\R^p$ to the larger space of $p\times p$ PSD matrices.
	If the $x_i$'s are randomly chosen (say, Gaussian),
	and we solve the semidefinite program
	\[
		\Bhat = \argmin_{B \succeq 0}~\frac{1}{2n} \sum_{i=1}^n (y_i - \ipHS{X_i}{B})^2,
	\]
	we can bound $\normHS{\Bhat - B^*} \lesssim \sigma \sqrt{p/n}$
	as long as $n \gtrsim p$, where $\sigma$ is the standard deviation of the $\xi_i$'s.
	(As shown in \cite{Candes2012}, this implies that the leading eigenvector of $\Bhat$ is close to $\beta^*$ up to its sign.)
	Both the sample complexity and the error rate are comparable to those in ordinary linear regression.
	
	In PCA,
	we observe $n$ i.i.d.\ random vectors $\{x_i\}_{i=1}^n$,
	and we want to estimate the leading eigenvector $v_1$ of the covariance matrix $\Sigma = \E(x_1 \otimes x_1)$.
	Again, this can be solved in a lifted manner with a semidefinite program, noting that
	\[
		P_1 \coloneqq v_1 \otimes v_1
		= \argmax_{P \in \R^{p \times p}}~\ipHS{\Sigma}{P}~\text{s.t.}~\norm{P}_* \leq 1.
	\]
	An estimator $\Phat$ of $P_1$ is obtained\footnote{It would be computationally suboptimal in practice to compute the leading eigenvector of $\Sigmahat$ with a semidefinite program, but this formulation helps motivate our approach to the sparse case.} by replacing $\Sigma$ with the empirical covariance $\Sigmahat$.
	Again, if $n \gtrsim p$, we can recover $P_1$ within error proportional to $\sqrt{p/n}$ (where the constants depend on the gap between the first and second leading eigenvalues of $\Sigma$).
	
	\emph{Sparse phase retrieval} seeks to combine phase retrieval with sparse recovery.
	If $\beta^*$ is $s$-sparse,
	and we observe $y_i = \abs{\ip{x_i}{\beta^*}}^2 + \xi_i$ for $i \in \{1, \dots, n\}$,
	can we recover $\beta^*$ with a similar sample complexity and error as in linear sparse recovery?
	Similarly, the question we consider in \emph{sparse PCA} is whether, if the leading eigenvector $v_1$ is $s$-sparse,
	we can recover it with a similar sample complexity and error as in linear recovery.
	
	Our main contributions are the following:
	\begin{itemize}
		\item We present novel convex relaxations of the sparse phase retrieval and sparse PCA problems that use both a lifted formulation and a sparsity-inducing regularization,
		and we prove that for both problems, an estimator computed via a convex program achieves an $O(s \log (p/s))$ sample complexity as in linear sparse recovery.
		Furthermore, in both problems, the estimators achieve the optimal $O(\sqrt{(s/n) \log (p/s)})$ error rate (with the caveat, for the sparse phase retrieval problem with unbounded noise, that $n$ may need to be larger than the minimum sample complexity to obtain this optimal rate).
		\item Although we do not know how to compute the convex programs exactly (we suspect they may, in fact, be computationally intractable), we present a heuristic motivated by a careful analysis of the dual problem and the problem's optimality conditions,
		and we show that in the case of sparse phase retrieval,
		the resulting algorithm achieves nearly identical empirical performance to existing state-of-the-art sparse phase retrieval algorithms.
	\end{itemize}
	In the following sections, we describe the sparse phase retrieval and sparse PCA problems in more detail,
	and we review the related literature.

	\subsection{Sparse phase retrieval}
	\label{sec:pr_intro}
	Phase retrieval in $p$ dimensions with (sub-)Gaussian measurements is by now well-studied.
	If we have $n$ observations of the form $y_i \approx \abs{\ip{x_i}{\beta^*}}^2$,
	we can solve the optimization problem
	\begin{equation}
		\label{eq:pr_basic_ncvx}
		\betahat = \argmin_{\beta \in \R^p}~ \sum_{i=1}^n (y_i - \abs{\ip{x_i}{\beta^*}}^2)^2.
	\end{equation}
	Unfortunately, this is a nonconvex problem,
	so there is no immediately obvious way to solve it efficiently.
	(A similar optimization problem and similar nonconvexity appear if we instead write our measurements without the square, i.e., our observations are $\approx \abs{\ip{x_i}{\beta^*}}$.)
	
	Most approaches to this algorithmic difficulty fall into one of two categories.
	One method is to optimize a nonconvex loss function such as \eqref{eq:pr_basic_ncvx} directly (and iteratively) with a suitable initialization (e.g., \cite{Candes2015}).
	The other is the lifted semidefinite approach outlined in \Cref{sec:main_contrib}.
	For example, \textcite{Candes2013} show that if the design vectors $x_i$ are Gaussian,
	$y_i = \abs{\ip{x_i}{\beta^*}}^2 + \xi_i$,
	and we have $n \gtrsim p$ measurements,
	solving
	\[
		\Bhat = \argmin_{B \succeq 0}~\sum_{i=1}^n \abs{y_i - \ipHS{X_i}{B}}
	\]
	achieves $\norm{\Bhat - B^*}_F \lesssim \frac{1}{n} \sum_{i=1}^n \abs{\xi_i}$ with high probability.
	In the case of zero-mean random noise with standard deviation $\sigma$,
	we can, by using a squared loss, improve this to $\norm{\Bhat - B^*}_F \lesssim \sigma \sqrt{p/n}$ (see \cite{Thrampoulidis2019}).
	Thus we can solve the phase retrieval problem with a sample complexity and susceptibility to noise proportional to the dimension $p$;
	this is the same complexity as ordinary linear regression.
	
	Several results have been published on how to adapt iterative nonconvex phase retrieval algorithms to the sparse setting \cite{Netrapalli2013,Cai2016,Wang2018,Yuan2019,Yang2019,Jagatap2019}.
	Some \cite{Cai2016,Yang2019} do indeed achieve $O(\sigma \sqrt{(s/n) \log p})$ error bounds with zero-mean noise---this is very close to the optimal rate in linear sparse recovery (the rest do not analyze theoretically the noisy case).
	However, the theory in this literature requires $n \gtrsim s^2 \log p$,
	which, unless $s$ is very small, is much larger than what is required in linear sparse recovery.
	As \textcite{Soltanolkotabi2019} points out, the key difficulty is finding a good initialization for the algorithms---once we are close enough to $\beta^*$,
	we only need\footnote{Here and hereafter, $\gtrsim_{\log}$ ($\lesssim_{\log}$) will denote ``greater (less) than within a logarithmic factor.''} $n \gtrsim_{\log} s$ measurements to converge to a correct estimate.
	In practice, the first initialization step is often to estimate the support of $\beta^*$; the best known methods require $n \gtrsim_{\log} s^2$ measurements.
	We compare several of these algorithms (in addition to that of the purely algorithmic/empirical work \cite{Schniter2015}) to ours empirically in \Cref{sec:sims},
	and we see that all of them appear empirically to have \emph{linear} sample complexity in $s$.
	Another similar iterative algorithm is given in \cite{Bakhshizadeh2020}; it has similar sample complexity requirements but, interestingly,
	it is derived from a more abstract compression-based algorithm that, though not practically computable, does obtain optimal $O(s)$ sample complexity.
	
	We see qualitatively similar sample complexity requirements in the works \cite{Hand2016,Salehi2018}, which extend to the sparse case the convex PhaseMax framework \cite{Bahmani2017,Goldstein2018}.
	Both results only require $n \gtrsim_{\log} s$ measurements if we already have an ``anchor'' vector $\beta_0 \in \R^p$ that has significant correlation with $\beta^*$.
	However, it is not known how to find such a $\beta_0$ (in a computationally efficient manner) without $n \gtrsim_{\log} s^2$ measurements.
	
	More related to our results are methods to adapt the lifted convex phase retrieval approach to the sparse setting.
	The foundational theoretical work in this area is by \textcite{Li2013}, although some work (mostly empirical) appeared in \cite{Ohlsson2012,Ohlsson2012a}.
	The key idea is that if $\beta^* \in \R^p$ is $s$-sparse,
	the lifted version $B^* = \beta^* \otimes \beta^*$ is both rank-1 and at most $s^2$-sparse.
	In the noiseless case, they solve the optimization problem
	\begin{equation}
		\label{eq:separate_reg}
		\Bhat = \argmin_{B \succeq 0}~\lambda_1 \tr(B) + \lambda_2 \norm{B}_{1,1} \text{ s.t. } \ipHS{X_i}{B} = y_i,\ i = 1,\dots, n,
	\end{equation}
	where $\norm{\cdot}_{1,1}$ denotes the elementwise $\ell_1$ norm of a matrix.
	The trace regularization term promotes low rank, while the $\ell_1$ norm promotes sparsity.
	As with the nonconvex methods, their theory requires $n \gtrsim s^2 \log p$ measurements to get exact recovery.
	The result of \cite{Thrampoulidis2019}, when specialized to sparse phase retrieval, extends this approach to the noisy case, getting, within log factors, the same $O(s^2)$ sample and noise complexity.
	
	Finally, we note that although we are primarily concerned with generic measurement vectors $x_i$ (e.g., sub-Gaussian),
	one can obtain better theoretically guaranteed sample complexity with practical algorithms if we have complete control over how the measurements are chosen; see, for example, \cite{Bahmani2015,Iwen2017}.
	% Bahmani, Iwen, etc.--specially structured measurements
	\subsection{Sparse PCA}
	\label{sec:pca_intro}
	PCA is a well-established technique with which, given points $x_1, \dots, x_n \in \R^p$,
	we try to find a low-dimensional linear (or affine) subspace that contains most of the energy in the data.
	If $x_1, \dots, x_n$ have zero empirical mean (e.g., after centering),
	the closest $r$-dimensional subspace to the points (in mean square $\ell_2$ distance) is
	the space spanned by the top $r$ eigenvectors of the empirical covariance matrix $\Sigmahat = \frac{1}{n} \sum_{i=1}^n x_i \otimes x_i$.
	
	For simplicity, take $r = 1$.	
	Suppose the $x_i$'s are i.i.d.\ copies of a random variable $x$ with true covariance $\Sigma$ with eigenvalue decomposition $\Sigma = \sum_\ell \sigma_\ell v_\ell \otimes v_\ell$,
	where $\sigma_1 > \sigma_2 \geq \cdots \geq \sigma_p$.
	If $x$ is Gaussian, and $\sigma_2 \gtrsim \frac{\sigma_1}{p-1}$,
	then, with high probability \cite{Koltchinskii2017},
	\[
		\norm{\Sigmahat - \Sigma}_2 \lesssim\sqrt{\sigma_1 \frac{\sigma_1 + (p-1) \sigma_2}{n}} \lesssim \sqrt{\sigma_1 \sigma_2 \frac{p}{n}}.
	\]
	Then, if $\vhat_1$ is the leading eigenvector of $\Sigmahat$,
	the Davis-Kahan $\sin \Theta$ theorem gives 
	\[
		\norm{\vhat_1 \otimes \vhat_1 - v_1 \otimes v_1}_2 \lesssim \frac{\sqrt{\sigma_1 \sigma_2}}{\sigma_1 - \sigma_2} \sqrt{\frac{p}{n}}.
	\]
	This rate is minimax-optimal over general covariance matrices with the given $\sigma_1,\sigma_2$ (see \cite{Vu2012}).
	
	When $p$ is large compared to $n$, we need to impose more structure on $\Sigma$ to recover the leading eigenvector(s) accurately.
	In sparse PCA, we consider the case in which the eigenvector(s) of interest are \emph{sparse}.	
	This problem has been extensively studied in the past decade:
	see \cite{Zou2018} for a recent review.
	
	In the single-eigenvector recovery case ($r = 1$),
	\textcite{Cai2013} show that if the leading eigenvector $v_1$ is $s$-sparse,
	the minimax rate for all estimators $\vhat_1$ of $v_1$ over the simple class $\{\Sigma = \sigma_2 I_p + (\sigma_1 - \sigma_2) v_1 \otimes v_1 \colon v_1\ \text{$s$-sparse}, \norm{v_1}_2 = 1 \}$ is
	\[
		\norm{\vhat_1 \otimes \vhat_1 - v_1 \otimes v_1}_2 \approx \frac{\sqrt{\sigma_1 \sigma_2}}{\sigma_1 - \sigma_2} \sqrt{\frac{s \log (p/s)}{n}}.
	\]
	
	While this theoretical result is clean and achieves our desire to bring sparse-recovery sample complexity and error to the PCA problem,
	one practical problem remains:
	how do we \emph{compute} an estimator $\vhat_1$ that achieves these theoretical properties?
	The optimal estimator proposed in \cite{Cai2013} is, to quote that paper ``computationally intensive.''
	As with sparse phase retrieval, the best theoretical results for computationally efficient algorithms require $n \gtrsim_{\log} s^2$ to guarantee accurate recovery (see, e.g., \cite{Cai2013,Birnbaum2013}).
	Once again, proper initialization (often by estimating the support of $v_1$) is the key difficulty.
	
	There is strong evidence to suggest that this $s^2$ barrier may be inescapable for computationally efficient algorithms.
	Recent results suggest that any statistically optimal estimator that requires fewer measurements must be NP-hard to compute.
	\textcite{Berthet2013} showed that if a certain testing problem in random graph theory (the \emph{planted clique problem}) is NP-hard to compute in certain regimes (which is widely believed although so-far unproved in standard computational models), then accurately \emph{testing for the existence of} a sparse leading eigenvector when $n \lesssim_{\log} s^2$ is NP-hard.
	\textcite{Wang2016,Gao2017} further refine this by showing that, under a similar assumption, there is no efficiently computable consistent estimator of $v_1$ when $n \lesssim_{\log} s^2$.
	
	\section{Key tool: A sparsity-and-low-rank--inducing atomic norm}
	\label{sec:atomic}
	To motivate our approach,
	consider the optimization problem \eqref{eq:separate_reg} from \cite{Li2013} for sparse phase retrieval
	or its least-squares version
	\begin{equation}
		\label{eq:ls_norm_comb}
		\Bhat = \argmin_{B \succeq 0}~\frac{1}{2n} \sum_{i=1}^n (y_i - \ipHS{X_i}{B})^2 + \lambda_1 \tr(B) + \lambda_2 \norm{B}_{1,1}.
	\end{equation}
	It turns out that quadratic (in sparsity) $O(s^2)$ complexity is a fundamental performance bound for this class of methods.
	Our target matrix $B^*$ has two kinds of structure: it is rank-1 and $s^2$-sparse.
	The trace regularization in our estimator encourages low rank, while the $\ell_1$ regularization encourages sparsity.
	However, recent work \cite{Oymak2015,Kliesch2019} has shown it is impossible to take advantage of both kinds of structure simultaneously with a regularizer that is merely a convex combination of the two structure-inducing regularizers;
	the best we can do is exploit either the low rank as in non-sparse phase retrieval,
	in which case we get $O(p)$ complexity, or the $s^2$-sparsity, in which case we get $O(s^2)$ complexity.
	
	To see intuitively why we have this problem, note that the nuclear norm and elementwise $\ell_1$ norm are both examples of \emph{projective tensor norms} \cite{Diestel2002}.
	For matrix $A$ of any size,
	\[
		\norm{A}_* = \inf~ \braces*{ \sum \norm{u_k}_2 \norm{v_k}_2 : A = \sum u_k \otimes v_k}
	\]
	and
	\[
		\norm{A}_{1,1} = \inf~ \braces*{ \sum \norm{u_k}_1 \norm{v_k}_1 : A = \sum u_k \otimes v_k}
	\]
	Equivalently, these norms are atomic norms \cite{Chandrasekaran2012} where the atoms are rank-1 matrices with unit $\ell_2$ or $\ell_1$ norms.
	For a PSD matrix, the trace is the nuclear norm,
	so the regularizer in \eqref{eq:ls_norm_comb} can be expressed as
	\begin{equation}
		\label{eq:cvx_reg}
		\begin{aligned}
			\lambda_1 \tr(B) + \lambda_2 \norm{B}_{1,1}
			&= \lambda_1 \inf~ \braces*{ \sum \norm{u_k}_2 \norm{v_k}_2 : B = \sum u_k \otimes v_k} \\
			&\qquad+ \lambda_2 \inf~ \braces*{ \sum \norm{w_k}_1 \norm{z_k}_1 : B = \sum w_k \otimes z_k}.
		\end{aligned}
	\end{equation}
	A key feature of $B^* = \beta^* \otimes \beta^*$ is that the factors of its rank-1 decomposition have a certain $\ell_2$ norm \emph{and} are sparse.
	Because the two infima in \eqref{eq:cvx_reg} are separate, the regularizer promotes matrices with two \emph{separate} atomic decompositions of low $\ell_2$ and $\ell_1$ norm respectively.
	It does not encourage a decomposition into low-rank matrices with factors that have \emph{simultaneously} low $\ell_2$ norm and low $\ell_1$ norm.
	
	Inspired by the framework of \textcite{Haeffele2020},
	we propose the following regularizer:
	\begin{equation}
		\label{eq:mixednorm_def}
		\normMixeds{B} \coloneqq \inf~\braces*{ \sum \theta_s(u_k, v_k) : B = \sum u_k \otimes v_k },
	\end{equation}
	where
	\[
		\theta_s(u, v) = \parens*{ \norm{u}_2 + \frac{1}{\sqrt{s}} \norm{u}_1 }\parens*{ \norm{v}_2 + \frac{1}{\sqrt{s}} \norm{v}_1},
	\]
	and $s > 0$ is a parameter that represents the sparsity (or an approximation thereof) of the vector we are interested in recovering.
	For some intuition on this choice of regularizer, note that
	\[
		\{ A: \normMixeds{A} \leq 1 \} \approx  \conv\{ u \otimes v : \norm{u}_2 = \norm{v}_2 = 1, u, v \text{ are $s$-sparse} \},
	\]
	by which we mean that either is contained within a modest scaled version of the other.
	One direction is a simple consequence of the fact that for an $s$-sparse vector $u$, $\norm{u}_1 \leq \sqrt{s} \norm{u}_2$.
	The other direction is provided by \Cref{lem:norm_atomic_equiv} in \Cref{app:mixednorm}.
	Thus $\normMixeds{\cdot}$ is (equivalent to) an atomic norm whose atoms are precisely the type of matrix we expect $B^*$ to be.\footnote{If we ``guess wrongly'' the sparsity of $\beta^*$, we can still get similar results with different constants of equivalence.}
	Similar notions of atomic norms that promote simultaneous low rank and sparsity have appeared in \cite{Richard2014,Kliesch2019}.
	
	We will show in the next section that using $\normMixeds{\cdot}$ as a regularizer in lifted formulations of sparse phase retrieval and PCA
	gives sample complexity and error bounds nearly identical to the linear regression case.
	
	\section{Theoretical guarantees for atomic-norm regularized estimators}
	\label{sec:theorems}
	In this section, we state precisely our main problems, assumptions, abstract convex optimization algorithm, and theoretical guarantees.
	
	\subsection{Sparse phase retrieval}
	\label{sec:pr_theory}
	
	Suppose $\beta^* \in \R^p$ is an $s$-sparse vector.
	Let $x$ be a random vector in $\R^p$.
	We observe $n$ i.i.d.\ copies $(x_1, y_1), \dots, (x_n, y_n)$ of the random couple $(x, y)$, where $y$ is a real random variable whose distribution conditioned on $x$ depends only on $\ip{x}{\beta^*}^2$ (i.e., $y \sim p_y(y \given \ip{x}{\beta^*}^2)$).
	Let $\xi \coloneqq y - \ip{x}{\beta^*}^2$ denote the ``noise.'' We make the following assumptions:
	
	\begin{assumption}[Sub-Gaussian measurements]
		\label{assump:spr_meas}
		The entries $(x^{(1)}, \dots, x^{(p)})$ of $x$ are i.i.d.\ real random variables with $\E x^{(\ell)} = 0$, $\E (x^{(\ell)})^2 = 1$, $\E (x^{(\ell)})^4 > 1$, and sub-Gaussian norm $\norm{x^{(\ell)}}_{\psi_2} \leq K$ for some $K > 0$.
	\end{assumption}
	Note that the fourth-moment assumption excludes Rademacher random variables.
	In what follows, for simplicity of presentation, all dependence on $K$ and the difference $\E (x^{(\ell)})^4 - 1$ will be subsumed into unspecified constants.
	
	\begin{assumption}[Zero-mean, bounded-moment noise]
		\label{assump:spr_noise}
		$\E [\xi \given x] = 0$ almost surely, and, for all $u \in \R^p$ such that $\norm{u}_2 \leq 1$,
		\[
			\E \xi^2 \ip{x}{u}^4 \leq \sigma^2(\beta^*),
		\]
		where $\sigma^2(\beta^*)$ is a quantity that possibly depends on the vector $\beta^*$, the distribution of $x$, and the conditional distribution of $y$.
		Furthermore, there are $M, \eta \geq 0$ such that
		\[
			\norm{\xi \ip{x}{u}^2}_\alpha \leq M \alpha^{\eta + 1}
		\]
		for $\alpha \geq 3$ and all $u \in \R^p$ such that $\norm{u}_2 \leq 1$
		(where $\norm{Z}_\alpha \coloneqq (\E \abs{Z}^\alpha)^{1/\alpha}$ for any random variable $Z$).
	\end{assumption}
	Our two working examples are the following:
	\begin{itemize}
		\item Independent additive noise: $\xi$ is independent of all other quantities, in which case we can take $\sigma^2(\beta) \approx \var(\xi)$,
		and $M$ and $\eta$ depend on the moments of $\xi$.
		
		\item Poisson noise: $y \sim \poissondist(\ip{x}{\beta^*}^2)$ conditioned on $x$.
		In this case, under \Cref{assump:spr_meas}, we can take $\sigma^2(\beta^*) \approx \norm{\beta^*}_2^2$,
		$M \approx \norm{\beta^*}_2 + 1$, and $\eta = 1$ (we prove this in \Cref{app:poisson}).
	\end{itemize}
	
	As before, we lift the problem into the space of PSD matrices by setting $B^* = \beta^* \otimes \beta^*$ and $X = x \otimes x$.
	We then choose a regularization parameter $\lambda \geq 0$ and compute our estimate by the following optimization problem:
	\begin{equation}
		\label{eq:pr_opt_main}
		\Bhat = \argmin_{B \in \R^{p\times p}}~\frac{1}{2n} \sum_{i=1}^n (y_i - \ipHS{X_i}{B})^2 + \lambda \normMixeds{B}.
	\end{equation}
	
	We then have the following guarantee for sample complexity and error, proved in \Cref{sec:pr_proof}:
	\begin{theorem}
		\label{thm:pr}
		Suppose \Cref{assump:spr_meas,assump:spr_noise} hold.
		Suppose $\beta^*$ is $s$-sparse and that the number of measurements $n$ satisfies $n \gtrsim s \log (ep/s)$.
		If the regularization parameter satisfies
		\[
			\lambda \gtrsim \sqrt{\frac{s \log(ep/s)}{n} \sigma^2(\beta^*)} + \frac{M}{n^{1-c}} \parens*{s \log \frac{ep}{s}}^{\eta+1},
		\]
		where $c \approx (s \log (ep/s))^{-1}$,
		then, with probability at least $1 - e^{-bn} - e^{-s} (s/p)^s$ (where $b > 0$ is a constant),
		the estimator $\Bhat$ from \eqref{eq:pr_opt_main} satisfies
		\[
			\norm{\Bhat - B^*}_* \lesssim \lambda.
		\]
	\end{theorem}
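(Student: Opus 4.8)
The plan is to run the by-now-standard analysis of a convex regularized $M$-estimator (in the Negahban--Ravikumar--Wainwright style), specialized to the atomic norm $\normMixeds{\cdot}$, and to supply the two probabilistic ingredients it needs: a restricted strong convexity (RSC) lower bound for the quadratic measurement operator on the relevant cone, and a bound on the ``effective noise'' $\normMixeds{\frac1n\sum_i\xi_i X_i}^*$ (measured in the norm dual to $\normMixeds{\cdot}$) that justifies the prescribed $\lambda$.

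\emph{Deterministic step.} Write $\Delta=\Bhat-B^*$; since the loss depends on $B$ only through its symmetric part and $\normMixeds{\cdot}$ is transpose-invariant, we may take $\Bhat$, hence $\Delta$, symmetric. Optimality of $\Bhat$ together with $y_i-\ipHS{X_i}{\Bhat}=\xi_i-\ipHS{X_i}{\Delta}$ gives the basic inequality
\[
 \frac1{2n}\sum_{i=1}^n\ipHS{X_i}{\Delta}^2\;\le\;\frac1n\sum_{i=1}^n\xi_i\ipHS{X_i}{\Delta}+\lambda\bigl(\normMixeds{B^*}-\normMixeds{\Bhat}\bigr).
\]
Let $S$ be the support of $\beta^*$ ($\abs{S}=s$) and let $T$ be the tangent model subspace at the atom $B^*=\beta^*\otimes\beta^*$, namely the matrices supported on $S\times S$ of rank at most two, with orthogonal projections $\projT,\projTp$. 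Using the (approximate) decomposability of $\normMixeds{\cdot}$ at $B^*$ --- the existence of $G\in\partial\normMixeds{B^*}$ with $\ip{\projTp G}{\projTp\Delta}=\normMixeds{\projTp\Delta}$ and $\normMixeds{\projT G}^*\le1$ --- together with $\bigl|\frac1n\sum_i\xi_i\ipHS{X_i}{\Delta}\bigr|\le\normMixeds{\frac1n\sum_i\xi_i X_i}^*\,\normMixeds{\Delta}$ and the choice $\lambda\ge2\,\normMixeds{\frac1n\sum_i\xi_i X_i}^*$, the basic inequality first yields the cone condition $\normMixeds{\projTp\Delta}\le3\,\normMixeds{\projT\Delta}$, and then, after inserting an RSC bound $\frac1n\sum_i\ipHS{X_i}{\Delta}^2\ge\kappa\normHS{\Delta}^2$ and the compatibility estimate $\normMixeds{\projT\Delta}\le4\sqrt2\,\normHS{\projT\Delta}$ (immediate since $\projT\Delta$ has rank at most two and $s$-sparse factors), gives $\normHS{\Delta}\lesssim\lambda/\kappa$. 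Hence $\normMixeds{\Delta}\le4\,\normMixeds{\projT\Delta}\lesssim\normHS{\Delta}$, and since $\normMixeds{\cdot}\ge\norm{\cdot}_*$ we conclude $\norm{\Delta}_*\lesssim\lambda$, $\kappa$ being an absolute constant.

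\emph{Effective noise.} By the equivalence of $\normMixeds{\cdot}$ with the atomic norm whose atoms are $s$-sparse unit rank-one matrices (\Cref{lem:norm_atomic_equiv}), $\normMixeds{\frac1n\sum_i\xi_i X_i}^*$ equals, up to an absolute constant, $\sup\bigl\{\frac1n\sum_i\xi_i\ip{x_i}{u}\ip{x_i}{v}:\norm{u}_2=\norm{v}_2=1,\ u,v\text{ $s$-sparse}\bigr\}$, which is a mean-zero process since $\E[\xi\given x]=0$. I would bound this supremum by a union/covering argument over the $\binom{p}{s}^2$ pairs of sparse supports and $\varepsilon$-nets of the associated unit $\ell_2$-spheres, so the effective entropy is $L\coloneqq s\log(ep/s)$, and control each individual average by a high-moment (Rosenthal-type) inequality with moment order $\asymp L$. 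The ``sub-Gaussian'' part of this estimate is governed by $\E\xi^2\ip{x}{u}^2\ip{x}{v}^2\le\sigma^2(\beta^*)$ (from \Cref{assump:spr_noise} and AM--GM) and produces, after the union bound, the term $\sqrt{\frac{L}{n}\sigma^2(\beta^*)}$ and the failure probability $e^{-s}(s/p)^s=e^{-L}$; the heavy-tailed part is governed by the sub-Weibull growth $\norm{\xi\ip{x}{u}^2}_\alpha\le M\alpha^{\eta+1}$ and, because the moment order used is $\asymp L$, produces a term of size $\frac{M}{n^{1-c}}L^{\eta+1}$ with $c\asymp1/L$ (the factor $n^{c}=n^{1/L}$ being the usual slack of a high-moment tail bound). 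Taking $\lambda$ above the sum of these two bounds is exactly the hypothesis.

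\emph{RSC, and the main obstacle.} For $n\gtrsim L$ I would prove $\frac1n\sum_i\ipHS{X_i}{\Delta}^2\ge\kappa\normHS{\Delta}^2$ uniformly over the cone $\{\normMixeds{\projTp\Delta}\le3\,\normMixeds{\projT\Delta}\}$ via Mendelson's small-ball method. The anti-concentration input is that, for symmetric $\Delta$,
\[
 \E\ip{x}{\Delta x}^2=(\tr\Delta)^2+\bigl(\E (x^{(\ell)})^4-3\bigr)\sum_\ell\Delta_{\ell\ell}^2+2\normHS{\Delta}^2\;\gtrsim\;\normHS{\Delta}^2,
\]
where the hypothesis $\E (x^{(\ell)})^4>1$ --- which excludes Rademacher entries --- is precisely what keeps the right-hand side bounded below by a positive multiple of $\normHS{\Delta}^2$; combined with Paley--Zygmund and the hypercontractive (sub-Gaussian) moments of the entries, this yields the required small-ball estimate with an absolute constant. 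The remaining multiplier term in the small-ball bound reduces to the Gaussian width of the cone intersected with the Frobenius unit ball, which is $\lesssim\sqrt{L}$ because the atoms of $\normMixeds{\cdot}$ are \emph{simultaneously} sparse and low-rank (this width is essentially the expected operator norm of the largest $s\times s$ submatrix of a $p\times p$ Gaussian matrix, $\lesssim\sqrt{s}+\sqrt{\log\binom{p}{s}}\asymp\sqrt{L}$); this smallness of the descent cone at $B^*$ --- in contrast to the much larger cone for the $\tr(\cdot)+\norm{\cdot}_{1,1}$ relaxation --- is exactly why the sample complexity is $O(L)$ rather than $O(s^2)$. The main obstacle is making this uniform lower bound rigorous: one needs logarithmic rather than quadratic dependence on $s$ while controlling the sub-exponential (second-chaos) tails of $\ip{x}{\Delta x}$, which requires chaining with mixed $\psi_1/\psi_2$ increments (e.g.\ bounds on suprema of chaos processes), so that $\kappa$ and the failure probability $e^{-bn}$ emerge as claimed.
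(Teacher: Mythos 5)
Your overall architecture---basic inequality, a subgradient/decomposability bound at $B^*$, a dual-norm bound on $\frac1n\sum_i\xi_i X_i$ via a union bound over sparse supports plus a high-moment (Fuk--Nagaev/Rosenthal) inequality at moment order $\asymp s\log(ep/s)$, and restricted strong convexity via the small-ball method with Paley--Zygmund and the fourth-moment condition---is essentially the paper's. The two probabilistic ingredients are correctly identified (these are \Cref{lem:emp_proc} and \Cref{lem:rsc}), and your worry that the multiplier term in the small-ball bound needs chaining for chaos processes is unfounded: the paper reduces the Rademacher complexity over the mixed-norm ball to a supremum over $s$-sparse unit rank-one atoms via \Cref{lem:norm_atomic_equiv}, after which a covering argument over the $\binom{p}{s}^2$ support pairs with a Bernstein inequality suffices (\Cref{lem:emp_proc_exp}).

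The genuine gap is in your deterministic step. You invoke decomposability of $\normMixeds{\cdot}$ at $B^*$ in the exact Negahban--Ravikumar--Wainwright form: a single $G\in\partial\normMixeds{B^*}$ with $\ipHS{\projTp G}{\projTp\Delta}=\normMixeds{\projTp\Delta}$ and $\normMixeds{\projT G}^*\leq1$. For this nonstandard atomic norm that property is not established anywhere (and is most likely false as stated); producing even an approximate substitute is the technical heart of the proof. What the paper actually proves (\Cref{lem:subgrad_ineq}, via \Cref{lem:subgrads}) is only that for each $A$ there exists some $W\in\partial\normMixeds{B^*}$---depending on $A$---with $\ipHS{W}{A}\geq\frac{1}{10}\normMixeds{A}-5\norm{A}_F$. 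The constant $\frac{1}{10}$ and the additive Frobenius slack are forced: one can only place suitably \emph{scaled} dual certificates in the components $T\cap I^\perp$, $T^\perp\cap I$, and $T^\perp\cap I^\perp$ while keeping $W$ a valid subgradient (the $T^\perp\cap I^\perp$ component, for instance, only admits a $\frac15$-scaled dual element). Your cone condition $\normMixeds{\projTp\Delta}\leq3\,\normMixeds{\projT\Delta}$ and everything downstream therefore rest on an unproven structural claim. The argument still closes with the weaker, provable inequality---one obtains $\normMixeds{H}\leq100\norm{H}_F$ directly from the basic inequality and then applies restricted lower isometry on that set---but that substitution, and the construction of the subgradient behind it, is precisely the nontrivial content your proposal treats as routine.
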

	\begin{remark}
		For simplicity of presentation, we assume that the sparsity level $s$ used in the regularizer is in fact (an upper bound on) the sparsity of $\beta^*$.
		We could easily extend our results to the ``misspecified'' case $\norm{\beta^*}_0 = s_0 > s$.
	\end{remark}

	\begin{remark}
		By a standard argument (found, e.g., in \cite{Candes2012}), if $\betahat \otimes \betahat$ is the closest rank-1 approximation to $\Bhat$, then $\betahat$ satisfies
		\[
		\min\{ \norm{\betahat - \beta^*}_2, \norm{\betahat + \beta^*}_2 \} \lesssim \frac{\lambda}{\norm{\beta^*}_2}.
		\]
	\end{remark}
	
	\begin{remark}
		The required sample complexity $s \log (ep/s)$ is precisely the optimal sample complexity from traditional linear sparse recovery.
		For large $n$, the noise error rate (with appropriately chosen $\lambda$) is also the optimal $\sqrt{(s/n) \log (ep/s)}$,
		but, if $\eta > 0$, achieving this rate may require $n$ to be significantly larger than $s \log (ep/s)$.
		More precisely, the first term containing the optimal rate will dominate if and only if
		\[
			n^{1-2c} \gtrsim \frac{M^2}{\sigma^2(\beta^*)} \parens*{ s \log \frac{ep}{s} }^{1 + 2\eta}.
		\]
		If the noise $\xi$ is bounded, we can take $\eta = 0$, and we only need $n^{1-2c} \gtrsim s \log \frac{ep}{s}$ to obtain the optimal error rate.
		For most interesting cases (where $c$ is very small), this is negligibly different from the sample complexity requirement.
		If $\xi$ is (conditionally) sub-Gaussian, we can take $\eta = 1/2$, in which case we need $n^{1-2c} \gtrsim_{\log} s^2$.
		If $\xi$ is (conditionally) sub-exponential, as in the Poisson noise case, we need $n^{1-2c} \gtrsim_{\log} s^3$.
		The need for larger $n$ comes (in our proof) from concentration inequalities for sums of terms of the form $\xi \ip{x}{u}^2$ for arbitrary vectors $u$; these terms have larger moments than the $\xi \ip{x}{u}$ terms we would typically see in linear settings.
		This could perhaps be improved with judicious truncation as in, for example, \cite{Chen2015b}.
	\end{remark}
	
	\begin{remark}
		In the independent additive noise case, one can check that our proof gives a high-probability bound uniform over $s$-sparse $\beta^*$.
		If $\var(\xi) = \sigma^2$, we get, for appropriately chosen $\lambda$,
		\[
		\norm{\Bhat - B^*}_* \lesssim \sqrt{\frac{s \log(ep/s)}{n}} \sigma + \frac{M}{n^{1-c}} \parens*{s \log \frac{ep}{s}}^{\eta+1}.
		\]
	\end{remark}
	
	\begin{remark}
		In the Poisson observation case, we obtain, for appropriately chosen $\lambda$,
		\[
		\norm{\Bhat - B^*}_* \lesssim \sqrt{\frac{s \log (ep/s)}{n}} \norm{\beta^*}_2 + \frac{\norm{\beta^*}_2 + 1}{n^{1-c}} \parens*{ s \log \frac{ep}{s}}^2.
		\]
		When $\beta^* \neq 0$, and $n$ is large enough that the first error term dominates,
		we have, up to a sign, that
		\[
			\norm{\betahat - \beta^*}_2 \lesssim \sqrt{\frac{s \log (ep/s)}{n}},
		\]
		where $\betahat$ is the appropriately-scaled leading eigenvector of $\Bhat$.
		Thus we get an error bound does that not depend on $\norm{\beta^*}_2$.
	\end{remark}

	\begin{remark}
		If there is no noise ($\xi = 0$),
		our analysis could easily be adapted to study the problem
		\[
			\min_B~\normMixeds{B} \text{ s.t. } \ipHS{X_i}{B} = y_i, i = 1,\dots,n.
		\]
		To understand how to use our proof techniques, note that any solution $\Bhat$ to the above problem satisfies $\sum_{i=1}^n \ipHS{X_i}{H}^2 = 0$
		and
		\[
			0 \geq \normMixeds{\Bhat} - \normMixeds{B^*} \geq \ipHS{W_{B^*}}{H},
		\]
		for any subgradient $W_{B^*} \in \partial \normMixeds{B^*}$,
		where $H = \Bhat - B$.
	\end{remark}
	
	\subsection{Sparse PCA}
	\label{sec:pca_theory}
		We can apply the atomic regularizer to the sparse PCA problem via another standard lifted formulation:
	\begin{theorem}
		\label{thm:pca}
		Suppose we observe $n$ i.i.d.\ copies of the $p$-dimensional vector $x \sim \normaldist(\mu, \Sigma)$,
		where $\Sigma = \sigma_1 v_1 \otimes v_1 + \Sigma_2$,
		$v_1$ is $s$-sparse and unit-norm, $\sigma_1 > \norm{\Sigma_2} \eqqcolon \sigma_2$, and $\Sigma_2 v_1 = 0$.
		Choose
		\[
			\lambda \gtrsim \sqrt{\sigma_1 \sigma_2} \sqrt{\frac{s \log (ep/s)}{n}}
		\]
		and let
		\begin{equation}
			\label{eq:pca_opt}
			\Phat = \argmin_{P \in \R^{p \times p}}~ - \ipHS{\Sigmahat}{P} + \lambda \normMixeds{P} \ \mathrm{s.t.} \ \norm{P}_* \leq 1,
		\end{equation}
		where
		\[
			\Sigmahat = \frac{1}{n} \sum_{i=1}^n (x_i - \xbr) \otimes (x_i - \xbr) = \parens*{\frac{1}{n} \sum_{i=1}^n x_i \otimes x_i} - \xbr \otimes \xbr
		\]
		is the empirical covariance of $x_1, \dots, x_n$ ($\xbr = \frac{1}{n} \sum_{i=1}^n x_i$).
		
		For $t > 0$, if $n \gtrsim \max\braces*{ s \log \frac{ep}{s}, \parens*{\frac{\sigma_1}{\sigma_1 - \sigma_2}}^2 t }$,
		then, with probability at least $1 - e^{-t} - 3 e^{-s}(s/p)^s$,
		\[
			\norm{\Phat - P_1}_F \lesssim \frac{\lambda}{\sigma_1 - \sigma_2},
		\]
		where $P_1 = v_1 \otimes v_1$.
	\end{theorem}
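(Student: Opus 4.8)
The plan is to run the familiar ``basic inequality $+$ restricted curvature $+$ concentration'' argument for regularized estimators, adapted here to a linear objective over the nuclear-norm ball, with the spectral gap $\sigma_1-\sigma_2$ playing the role of strong convexity. Since $\Phat$ minimizes \eqref{eq:pca_opt} and $P_1$ is feasible ($\norm{P_1}_*=1$), comparing objective values gives, with $H\coloneqq\Phat-P_1$ and $E\coloneqq\Sigmahat-\Sigma$,
\[
	\ipHS{\Sigma}{P_1-\Phat}\;\le\;\lambda\bigl(\normMixeds{P_1}-\normMixeds{\Phat}\bigr)+\ipHS{E}{H}.
\]
Everything reduces to lower-bounding the left side by a curvature term and upper-bounding the right side by $\lambda\normHS{H}$ up to absorbable pieces.

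For the curvature I would use the gap via a pinching argument. Because $\Sigma_2 v_1=0$, for any $P$ one has $\ipHS{\Sigma}{P}=\sigma_1\ipHS{P_1}{P}+\ipHS{\Sigma_2}{\scrP_{v_1^\perp}P\scrP_{v_1^\perp}}$, and pinching (zeroing the off-diagonal blocks with respect to $\mathrm{span}(v_1)\oplus v_1^\perp$ does not increase the nuclear norm) gives $\norm{\scrP_{v_1^\perp}P\scrP_{v_1^\perp}}_*\le\norm{P}_*-\lvert\ipHS{P_1}{P}\rvert$. Writing $\delta\coloneqq 1-\ipHS{P_1}{\Phat}=-\ipHS{P_1}{H}\ge 0$, these yield $\ipHS{\Sigma}{P_1-\Phat}\ge(\sigma_1-\sigma_2)\delta$ on the one hand, and $\normHS{H}^2=\normHS{\Phat}^2-2\ipHS{P_1}{\Phat}+1\le 2\delta$ (using $\normHS{\Phat}\le\norm{\Phat}_*\le1$) on the other; so it suffices to bound $\delta$ and then use $\normHS{H}\le\sqrt{2\delta}$.

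Next I would split $E$ relative to $v_1$ into its $v_1\otimes v_1$ block $E_{11}$, its cross block $E_\times$, and its $v_1^\perp\otimes v_1^\perp$ block $E_\perp$. The first contributes $\ipHS{E_{11}}{H}=-\delta\,\ipHS{P_1}{E}$, where $\ipHS{P_1}{E}=v_1^\top(\Sigmahat-\Sigma)v_1$ is a one-dimensional sample-variance fluctuation, so $\lvert\ipHS{P_1}{E}\rvert\lesssim\sigma_1\sqrt{t/n}$ with probability $1-e^{-t}$; under the hypothesis $n\gtrsim(\sigma_1/(\sigma_1-\sigma_2))^2 t$ this is at most $\tfrac12(\sigma_1-\sigma_2)$, so $\ipHS{E_{11}}{H}$ is absorbed back into the curvature term. (This is precisely why that extra lower bound on $n$ appears, and why nothing worse is needed.) The other two blocks are first order in $H$: $E_\perp$ is the centered empirical-covariance deviation of the Gaussian $\scrP_{v_1^\perp}x$, which has covariance $\Sigma_2$ of operator norm $\sigma_2$; and $E_\times$ is assembled from $\scrP_{v_1^\perp}(\Sigmahat-\Sigma)v_1=\scrP_{v_1^\perp}\Sigmahat v_1$, a mean-zero average of products of essentially independent Gaussians with variance proxies $\le\sigma_2$ and $\sigma_1$. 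The crucial probabilistic estimate is that, with the stated probability, the dual norm of $\normMixeds{\cdot}$ applied to $E_\times+E_\perp$ is $\lesssim\sqrt{\sigma_1\sigma_2}\,\sqrt{(s/n)\log(ep/s)}$; the geometric mean $\sqrt{\sigma_1\sigma_2}$ (rather than $\norm{\Sigma}$) arises because one block lives entirely on $v_1^\perp$ (variance $\sigma_2$) while the cross block pairs the $\sigma_1$-direction with an $s$-sparse $\sigma_2$-direction. This is a Koltchinskii--Lounici-type supremum of a degree-two Gaussian chaos over the sparse, rank-one atom set, and the union over $s$-sparse supports ($\binom{p}{s}\le(ep/s)^s$) produces the $(s/p)^s$ factor and the $s\log(ep/s)$ in the sample complexity.

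Choosing $\lambda$ above that threshold, the $\projTp H$ part of $\ipHS{E_\times+E_\perp}{H}$ is dominated by the $-\lambda\normMixeds{\projTp H}$ coming from (approximate) decomposability of $\normMixeds{\cdot}$ at the $s$-sparse rank-one matrix $P_1$ (in the spirit of \Cref{lem:norm_atomic_equiv}), while decomposability together with the nuclear-ball constraint $\norm{\Phat}_*\le\norm{P_1}_*$ confines $H$ to a cone on which $\normMixeds{\projT H}\lesssim\normHS{\projT H}\le\normHS{H}$. Then $\lambda(\normMixeds{P_1}-\normMixeds{\Phat})$ and the surviving part of $\ipHS{E}{H}$ are both $\lesssim\lambda\normHS{H}\le\lambda\sqrt{2\delta}$, so the basic inequality collapses to $(\sigma_1-\sigma_2)\delta\lesssim\lambda\sqrt{\delta}$, i.e.\ $\sqrt{\delta}\lesssim\lambda/(\sigma_1-\sigma_2)$, whence $\normHS{\Phat-P_1}\le\sqrt{2\delta}\lesssim\lambda/(\sigma_1-\sigma_2)$. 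I expect the main obstacle to be the concentration estimate: extracting the sharp $\sqrt{\sigma_1\sigma_2}$ scaling requires handling the $v_1$ and $v_1^\perp$ blocks of $\Sigmahat-\Sigma$ separately, each as a chaos supremum over $s$-sparse directions with the correct variance proxy, and carrying the empirical centering by $\xbr$ cleanly; a secondary nuisance is establishing the approximate decomposability of $\normMixeds{\cdot}$ at $P_1$, which is an atomic norm only up to absolute constants.
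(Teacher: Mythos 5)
Your proposal is correct and follows essentially the same route as the paper's proof: the basic inequality, the spectral-gap curvature via pinching together with $\norm{H}_F^2 \le 2(1-\ipHS{\Phat}{P_1})$, the splitting of $\Sigmahat-\Sigma$ into its $P_1$ component (absorbed into the gap via the $n \gtrsim (\sigma_1/(\sigma_1-\sigma_2))^2 t$ condition) and its orthogonal part bounded in the dual mixed norm at rate $\sqrt{\sigma_1\sigma_2}\sqrt{(s/n)\log(ep/s)}$, and the approximate decomposability of $\normMixeds{\cdot}$ at $P_1$ (which the paper supplies as the subgradient inequality of \Cref{lem:subgrad_ineq}, not \Cref{lem:norm_atomic_equiv}). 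The two ingredients you flag as obstacles are exactly the paper's \Cref{lem:subgrad_ineq} and the Bernstein-plus-covering bound of \Cref{lem:emp_proc_exp}, established just as you describe.
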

	We prove this result fully in \Cref{app:pca_proof}. A sketch of the proof is provided in \Cref{sec:pca_proof_sketch}.
	
	\begin{remark}
		The assumption that $x$ is Gaussian could easily be relaxed to $x = \Sigma^{1/2} z$,
		where $z$ is a sub-Gaussian random vector, as in, for example, \cite{Vu2012}.
	\end{remark}
	\begin{remark}
		For properly chosen $\lambda$ the resulting error rate
		\[
			\norm{\Phat - P_1}_F \lesssim \frac{\sqrt{\sigma_1 \sigma_2}}{\sigma_1 - \sigma_2} \sqrt{\frac{s \log (ep/s)}{n}}
		\]
		matches the minimax lower bounds in \cite{Vu2012,Cai2013}.
	\end{remark}

	\subsection{PSD constraints and another regularizer}
	For phase retrieval and PCA,
	it is natural to restrict our estimators to be PSD.
	All of our theoretical results hold if we add a $B \succeq 0$ constraint to \eqref{eq:pr_opt_main} or a $P \succeq 0$ constraint to \eqref{eq:pca_opt}.
	
	Unlike the nuclear norm case (where the optimal decomposition is the singular value decoposition, which is identical to the eigenvalue decomposition for a PSD matrix),
	it is not clear whether every PSD matrix $B$ admits a symmetric (i.e., $u_k = v_k$) optimal decomposition with regard the definition of $\normMixeds{B}$ in \eqref{eq:mixednorm_def}.
	Therefore, it is natural to define as a new regularizer the following gauge function/asymmetric norm on the space of PSD matrices:
	for $B \succeq 0$,
	\[
		\Theta_s(B) = \inf~\braces*{ \sum \theta_s(u_k, u_k) : B = \sum u_k \otimes u_k }.
	\]
	All of our theoretical and computational results in \Cref{sec:theorems,sec:opt} can be easily extended to this choice of regularizer.
	This choice of regularizer is computationally convenient because if we optimize over a matrix $B$ by optimizing over factors $u_k, v_k$ such that $B = \sum_k u_k \otimes v_k$ (see \Cref{sec:practical_alg}),
	we can enforce a PSD constraint simply by forcing $u_k = v_k$.
	
	\section{Proof highlights}
	In this section, we outline the proofs of \Cref{thm:pr,thm:pca}.
	We fully prove \Cref{thm:pr} from some technical lemmas, while we sketch the proof of \Cref{thm:pca}
	
	\subsection{Sparse phase retrieval proof}
	\label{sec:pr_proof}
	In this section, we prove \Cref{thm:pr}, which is our error bound for sparse phase retrieval.
	We will use the following key technical lemmas:
	\begin{lemma}[Subgradients of mixed atomic norm]
		\label{lem:subgrad_ineq}
		Suppose $\beta \in \R^p$ is $s$-sparse, and let $B = \beta \otimes \beta$.
		Then, for every matrix $A \in \R^{p \times p}$,
		there exists $W \in \partial \normMixeds{B}$ such that
		\[
		\ipHS{W}{A} \geq \frac{1}{10} \normMixeds{A} - 5 \norm{A}_F.
		\]
	\end{lemma}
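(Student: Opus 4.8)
The plan is to build, for each given $A$, a subgradient $W\in\partial\normMixeds{B}$ tailored to $A$ and then lower-bound $\ipHS{W}{A}$. Since $\partial\normMixeds{tB}=\partial\normMixeds{B}$ for $t>0$ and the asserted inequality does not involve $\beta$, I rescale so $\norm{\beta}_2=1$; write $I=\operatorname{supp}(\beta)$ (so $|I|\le s$), $\rho(w)=\norm{w}_2+\tfrac1{\sqrt s}\norm{w}_1$, and $R^*(\cdot)=\sup\{\ipHS{\cdot}{C}:\normMixeds{C}\le1\}$ for the dual norm. The building block is the rank-one certificate $W_0=\gamma\otimes\gamma$ with $\gamma=\beta+\tfrac1{\sqrt s}\operatorname{sign}(\beta)$, supported on $I$ with $\norm{\gamma}_2\le2$. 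Since $\ip{\gamma}{u}\le\norm{u}_2+\tfrac1{\sqrt s}\norm{u}_1=\rho(u)$ for all $u$, any $C=\sum_k u_k\otimes v_k$ with $\sum_k\theta_s(u_k,v_k)\le1$ satisfies $\ipHS{W_0}{C}=\sum_k\ip{\gamma}{u_k}\ip{\gamma}{v_k}\le1$, so $R^*(W_0)\le1$; combined with $\ipHS{W_0}{B}=\ip{\gamma}{\beta}^2=\rho(\beta)^2$ and the one-term bound $\normMixeds{B}\le\theta_s(\beta,\beta)=\rho(\beta)^2$, the chain $\rho(\beta)^2=\ipHS{W_0}{B}\le\normMixeds{B}\le\rho(\beta)^2$ (the first $\le$ by duality) gives $\ipHS{W_0}{B}=\normMixeds{B}$, i.e.\ $W_0\in\partial\normMixeds{B}$. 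Moreover $\ipHS{W_0}{A}=\ip{\gamma}{A\gamma}\ge-\norm{\gamma}_2^2\norm{A}_F\ge-4\norm{A}_F$, so $W=W_0$ already proves the lemma whenever $\normMixeds{A}\le10\norm{A}_F$ (then $\tfrac1{10}\normMixeds{A}-5\norm{A}_F\le-4\norm{A}_F$). It remains to treat the \emph{diffuse} case $\normMixeds{A}>10\norm{A}_F$.

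Here I would augment $W_0$ by a data-dependent term living off the model. Set $T=\operatorname{span}\{\beta\otimes e_i+e_i\otimes\beta:i\in I\}$, a space of rank-$\le 2$ matrices supported on $I\times I$ with $B\in T$, and split $A=A_T+A_{T^\perp}$ orthogonally. On $T$ the mixed norm is comparable to the Frobenius norm, $\normMixeds{M}\le c_0\norm{M}_F$ ($c_0$ absolute: writing $M$ over $\beta\otimes\beta$, $\beta\otimes a'$, $b'\otimes\beta$ with $a',b'\perp\beta$ supported on $I$, each summand is rank one with an $s$-sparse factor of $O(1)$ norm), so $\normMixeds{A_{T^\perp}}\ge\normMixeds{A}-c_0\norm{A}_F$. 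By convex duality there is $Z\in T^\perp$ with $R^*(Z)\le1$ and $\ipHS{Z}{A_{T^\perp}}=\min_{M\in T}\normMixeds{A_{T^\perp}-M}$, and a short argument (using $\normMixeds{\cdot}\ge\norm{\cdot}_F$, $\normMixeds{\cdot}|_T\le c_0\norm{\cdot}_F$, and $A_{T^\perp}\perp T$) bounds this minimum below by $\tfrac1{1+c_0}\normMixeds{A_{T^\perp}}$. I then take $W=W_0+\kappa Z$ for a suitable absolute constant $\kappa$. Since $Z\in T^\perp$ and $B\in T$, $\ipHS{W}{B}=\ipHS{W_0}{B}=\normMixeds{B}$, so $W\in\partial\normMixeds{B}$ as soon as $R^*(W)\le1$; granting that, $\ipHS{W}{A}=\ipHS{W_0}{A}+\kappa\ipHS{Z}{A_{T^\perp}}\ge-4\norm{A}_F+\tfrac{\kappa}{1+c_0}(\normMixeds{A}-c_0\norm{A}_F)$, and calibrating $\kappa$ (with $c_0<10$) yields $\ipHS{W}{A}\ge\tfrac1{10}\normMixeds{A}-5\norm{A}_F$.

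The crux is $R^*(W_0+\kappa Z)\le1$. For $u,v$ with $\rho(u)=\rho(v)=1$ one has $\ipHS{W_0+\kappa Z}{u\otimes v}=\ip{\gamma}{u}\ip{\gamma}{v}+\kappa\ip{u}{Zv}$, with $|\ip{\gamma}{u}\ip{\gamma}{v}|\le1$ and $|\ip{u}{Zv}|\le R^*(Z)\le1$, so the triangle inequality only gives $1+\kappa$; the point is that the two terms cannot both be near their extremes. Indeed $\sup_{\rho(u)\le1}\ip{\gamma}{u}=1$ is attained only at $u=\beta/\rho(\beta)$, and quantitatively $\ip{\gamma}{u}\ge1-\delta$ with $\rho(u)\le1$ forces $\norm{u}_2-\ip{\beta}{u}\le\delta$, hence $u=(\ip{\beta}{u})\beta+r$ with $\norm{r}_2\le\sqrt{2\delta}$, so $u$ carries only $O(\sqrt\delta)$ mass off $I$; meanwhile $Z\in T^\perp$ means $Z\beta$ and $Z^\top\beta$ vanish on $I$, so the on-$I$, $\beta$-aligned components of $u$ and $v$ contribute nothing to $\ip{u}{Zv}$, which is therefore governed by the off-$I$/off-$\beta$ parts of $u,v$---and these are $s$-sparse-like by the constraint $\rho(\cdot)=1$. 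A case split on $\delta_u=1-|\ip{\gamma}{u}|$ and $\delta_v=1-|\ip{\gamma}{v}|$ (each either small or bounded away from $0$) then closes $\ip{\gamma}{u}\ip{\gamma}{v}+\kappa\ip{u}{Zv}\le1$ for $\kappa$ below an absolute threshold. This quantitative, \emph{non-decomposable} dual-norm estimate is the main obstacle: it is what forces the small multiplier $\kappa$ and, downstream, the loss in the constant $\tfrac1{10}$ and the additive slack $5\norm{A}_F$, and making the various absolute constants close up cleanly may require refining the choice of $Z$ (tightening its support, or optimizing $c_0$). The remaining ingredients---the reduction, the easy regime, $\normMixeds{\cdot}|_T\lesssim\norm{\cdot}_F$, the duality producing $Z$, and the final arithmetic---are routine.
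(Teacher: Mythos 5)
Your overall architecture (the rank-one certificate $W_0=\gamma\otimes\gamma$, an easy regime when $\normMixeds{A}\lesssim\norm{A}_F$, and a dual correction supported on a complement of a tangent space) is in the same spirit as the paper's proof, and your easy case and the verification that $W_0\in\partial\normMixeds{B}$ are fine. But the step you yourself flag as the crux --- that $R^*(W_0+\kappa Z)\le 1$ for an \emph{absolute} constant $\kappa>0$ and an arbitrary dual-feasible $Z\in T^\perp$ --- is false for your choice of $T$, and the supporting claim ``$Z\in T^\perp$ means $Z\beta$ and $Z^\top\beta$ vanish on $I$'' is incorrect: orthogonality to $\operatorname{span}\{\beta\otimes e_i+e_i\otimes\beta: i\in I\}$ only forces the \emph{symmetrized} vector $Z\beta+Z^\top\beta$ to vanish \emph{on} $I$, and says nothing about its components off $I$. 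Concretely, take $\norm{\beta}_2=1$, $j\notin I$, and $Z=\tfrac12(\beta\otimes e_j+e_j\otimes\beta)$. This $Z$ lies in your $T^\perp$ and satisfies $R^*(Z)\le 1$. Now perturb along $u=v=(\beta+te_j)/\rho(\beta+te_j)$ with $\rho(w)=\norm{w}_2+\tfrac1{\sqrt s}\norm{w}_1$: one has $\ip{\gamma}{u}^2=1-2t/(r\sqrt s)+O(t^2)$ with $r=\rho(\beta)\in[1,2]$ (moving off the support costs only $t/\sqrt s$ in $\rho$ and nothing in $\ip{\gamma}{\cdot}$ to first order), while $\ipHS{Z}{u\otimes v}=t/r^2+O(t^2)$. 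Hence $\ip{\gamma}{u}\ip{\gamma}{v}+\kappa\ipHS{Z}{u\otimes v}\le 1$ forces $\kappa\lesssim 1/\sqrt s$, not an absolute constant; the calibration you need downstream then collapses.

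This is precisely the difficulty the paper's construction is built around. It uses the \emph{full} tangent space $T=\{x\otimes\beta+\beta\otimes y: x,y\in\R^p\}$ (so the dangerous directions $\beta\otimes e_j+e_j\otimes\beta$, $j\notin I$, live in $T$, not $T^\perp$), and then certifies the pieces of $A$ separately: the $T\cap I^\perp$ piece with the specially scaled certificate $\tfrac1{\sqrt s}(w_\beta\otimes\tilde u+\tilde v\otimes w_\beta)$, $\tilde u,\tilde v\in I^\perp$ with $\norm{\cdot}_\infty\le1$ (the explicit $1/\sqrt s$ is exactly what matches the $t/\sqrt s$ margin computed above); the $T^\perp\cap I$ piece with an \emph{operator-norm}-bounded certificate (valid because vectors supported on $I$ are automatically $s$-sparse); and the $T^\perp\cap I^\perp$ piece with the projection onto $T^\perp\cap I^\perp$ of a matrix of dual mixed norm $\le 1/5$. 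A convex combination of these is then verified directly to satisfy $\ip{Wu}{v}\le\theta_s(u,v)$. To salvage your route you would need to (i) enlarge $T$ to the full tangent space and handle its off-support part with a $1/\sqrt s$-scaled certificate, and (ii) further restrict or project the dual element $Z$ onto $I^\perp\times I^\perp$ rather than taking a generic dual-feasible $Z\in T^\perp$; at that point you have essentially reconstructed the paper's argument.
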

	
	\begin{lemma}[Empirical process bound]
		\label{lem:emp_proc}
		Let $G_1, \dots, G_n$ be i.i.d.\ copies of a random matrix $G \in \R^{p \times p}$,
		where, for all $u, v \in \R^p$, $\ip{Gu}{v}$ has zero mean,
		\[
		\E \ip{G u}{v}^2 \leq \sigma^2 \norm{u}_2^2 \norm{v}_2^2,
		\]
		and
		\[
		\norm{\ip{G u}{v}}_\alpha \leq M \alpha^{\eta+1} \norm{u}_2 \norm{v}_2
		\]
		for all $\alpha \geq 3$.
		
		Let	$Z = \frac{1}{n} \sum_{i=1}^n G_i$.
		For $s \geq 1$, with probability at least $1-e^{-s}(s/p)^s$,
		\[
		\sup_{\normMixeds{A} \leq 1}~\ipHS{Z}{A}
		\lesssim \sigma \sqrt{\frac{s \log (e p/s)}{n}} + \frac{M}{n^{1-c}} \parens*{s \log \frac{ep}{s}}^{\eta + 1},
		\]
		where $c \approx \frac{1}{s \log (ep/s)}$.
	\end{lemma}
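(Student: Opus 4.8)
The plan is to peel the $\normMixeds{\cdot}$-ball down to its extreme points (sparse rank-one matrices), discretize each with an $\epsilon$-net, control each net point by a one-dimensional moment inequality, and close with a union bound over support sets, whose log-cardinality, $\Theta(s\log(ep/s))$, is exactly the budget allowed by the target probability $e^{-s}(s/p)^s = e^{-s\log(ep/s)}$. Concretely, I would first invoke \Cref{lem:norm_atomic_equiv} to replace $\{A : \normMixeds{A}\le 1\}$ by an absolute-constant multiple of $\conv\{u\otimes v : \norm{u}_2 = \norm{v}_2 = 1,\ u\text{ and }v\text{ are }s\text{-sparse}\}$. Since $A\mapsto\ipHS{Z}{A}$ is linear, its supremum over a convex hull is attained at an extreme point, and $\ipHS{Z}{u\otimes v} = \ip{Zv}{u}$, so
\[
	\sup_{\normMixeds{A}\le 1}\ipHS{Z}{A}\ \lesssim\ \max_{\substack{S,T\subseteq\{1,\dots,p\}\\ \abs{S},\abs{T}\le s}}\ \ \sup_{\substack{\operatorname{supp}(u)\subseteq S,\ \norm{u}_2=1\\ \operatorname{supp}(v)\subseteq T,\ \norm{v}_2=1}}\ \ip{Zv}{u}.
\]

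For each fixed support pair $(S,T)$ I would take $\tfrac14$-nets $\mathcal N_S$ and $\mathcal N_T$ of the unit spheres of $\R^S$ and $\R^T$, each of cardinality at most $9^s$; the usual net bound for operator norms then dominates the inner supremum by $2\max_{u\in\mathcal N_S,\,v\in\mathcal N_T}\ip{Zv}{u}$. For a fixed net pair $(u,v)$ the scalar $\ip{Zv}{u}=\tfrac1n\sum_{i=1}^n\ip{G_iv}{u}$ is an average of i.i.d.\ mean-zero random variables with variance at most $\sigma^2$ and, for $\alpha\ge 3$, $\alpha$-th moment at most $M\alpha^{\eta+1}$. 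To this average I would apply a sharp form of Rosenthal's moment inequality, of the shape
\[
	\norm{\textstyle\sum_{i=1}^n\ip{G_i v}{u}}_\alpha\ \lesssim\ \sqrt{\alpha n}\,\sigma\ +\ n^{1/\alpha}M\alpha^{\eta+1}\qquad(\alpha\ge 3),
\]
and then, after dividing by $n$, Markov's inequality gives $\abs{\ip{Zv}{u}}\lesssim\sigma\sqrt{\alpha/n} + M\alpha^{\eta+1}/n^{1-1/\alpha}$ with probability at least $1-e^{-\alpha}$.

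To finish, I would note that there are at most $\binom ps^2\le(ep/s)^{2s}$ support pairs and at most $81^s$ net pairs inside each, so the estimate above has to hold over at most $\exp(Cs\log(ep/s))$ events; choosing $\alpha$ to be a sufficiently large constant times $s\log(ep/s)$ (which in particular makes $\alpha\ge 3$), the product $e^{-\alpha}\exp(Cs\log(ep/s))$ is at most $e^{-s\log(ep/s)} = e^{-s}(s/p)^s$, so by a union bound the estimate holds simultaneously over every support pair and every net point. Feeding $\alpha\asymp s\log(ep/s)$ back in turns $\sqrt{\alpha/n}$ into $\sqrt{(s/n)\log(ep/s)}$ and $n^{1/\alpha}$ into $n^{c}$ with $c\asymp 1/(s\log(ep/s))$; combined with the first two steps, this gives the claimed bound on $\sup_{\normMixeds{A}\le 1}\ipHS{Z}{A}$.

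The one delicate step is the scalar moment inequality: I need the heavy-tail term to be $n^{1/\alpha}M\alpha^{\eta+1}$, i.e.\ with the \emph{same} power $\alpha^{\eta+1}$ as in the hypothesis and no spurious extra factor of $\alpha$. A black-box Rosenthal inequality, whose optimal constant grows like $\alpha/\log\alpha$, would cost an extra power of $\alpha$ and degrade the final rate to $(s\log(ep/s))^{\eta+2}$ — which is no longer the linear sparse-recovery rate — so one genuinely needs the sharp form separating the sub-Gaussian contribution $\sqrt{\alpha n}\,\sigma$ (with only a $\sqrt\alpha$ loss) from the large-values contribution $n^{1/\alpha}\norm{\,\cdot\,}_\alpha$. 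Everything else — the atomic equivalence, the nets, and checking that the counting lands the failure probability at exactly $e^{-s}(s/p)^s$ while $n^{1/\alpha}$ turns into $n^{c}$ with $c\approx 1/(s\log(ep/s))$ — is routine bookkeeping.
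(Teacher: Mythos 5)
Your proposal is correct and follows essentially the same route as the paper's proof: reduce to $s$-sparse unit vectors via \Cref{lem:norm_atomic_equiv}, apply a $1/4$-net of size $9^s$ on each pair of $s$-sparse supports, bound each fixed bilinear form by a one-dimensional tail inequality, and union-bound over the $\binom{p}{s}^2 9^{2s}$ net points with $\alpha \asymp s\log(ep/s)$. The only cosmetic difference is that the paper invokes the Fuk--Nagaev inequality (Rio 2017, Theorem 3.1) directly for the scalar tail bound, which is exactly the sharp Rosenthal-plus-Markov estimate you describe, including the separation of the $\sigma\sqrt{\alpha/n}$ term from the $M\alpha^{\eta+1}n^{-(1-1/\alpha)}$ term that you correctly flag as the delicate point.
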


	\begin{lemma}[Restricted lower isometry]
		\label{lem:rsc}
		Let $x_1, \dots, x_n$ be i.i.d.\ copies of a random vector $x$ satisfying \Cref{assump:spr_meas},
		and let $X_i = x_i \otimes x_i$.
		Suppose
		\[
		n \gtrsim s \log \frac{e p}{s},
		\]
		and let $C \geq 1$ be a fixed constant.
		With probability at least $1 - e^{-bn}$ (for some $b > 0$), the following event holds:
		For all $A \in \R^{p \times p}$ such that
		\[
		\normMixeds{A} \leq C \norm{A}_F,
		\]
		we have
		\[
		\frac{1}{n} \sum_{i=1}^n \ipHS{X_i}{A}^2 \gtrsim \norm{A}_F^2,
		\]
		where the constant in the lower bound depends on $C$.
	\end{lemma}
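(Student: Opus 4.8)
The plan is to use the \emph{small-ball method} (Mendelson; Koltchinskii--Mendelson; Tropp). The quantities $\ipHS{X_i}{A}^2 = (x_i^{\top} A x_i)^2$ are squared quadratic forms, hence far too heavy-tailed for a union-bound-plus-Bernstein argument over an $\epsilon$-net; but they admit a good small-ball (lower-tail) estimate, which is all that a one-sided lower isometry requires. Throughout I work with symmetric $A$ (replace $A$ by $\operatorname{sym}(A)$; $\ipHS{X_i}{A}$ sees only the symmetric part, and this is the only case needed for \Cref{thm:pr} since the minimizer of \eqref{eq:pr_opt_main} may be taken symmetric, using $\normMixeds{\operatorname{sym}(B)} \le \normMixeds{B}$). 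By homogeneity it suffices to show $\inf_{A \in E} \frac1n \sum_{i=1}^n \ipHS{X_i}{A}^2 \gtrsim 1$, where $E \coloneqq \{A \in \R^{p\times p} : A = A^{\top},\ \norm{A}_F = 1,\ \normMixeds{A} \le C\}$.

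First I would establish a pointwise first-moment bound. A fourth-moment computation using \Cref{assump:spr_meas} (independent, mean-zero, unit-variance entries) gives, for symmetric $A$,
\[
\E \ipHS{X}{A}^2 \;=\; (\tr A)^2 + 2\norm{A}_F^2 + \parens*{\E (x^{(1)})^4 - 3} \textstyle\sum_j A_{jj}^2 \;\geq\; c_0 \norm{A}_F^2,
\]
with $c_0 \coloneqq \min\{2,\, \E (x^{(1)})^4 - 1\} > 0$; this is precisely where the hypothesis $\E (x^{(1)})^4 > 1$ is used, since for Rademacher entries the form $(x^{(1)})^2 - (x^{(2)})^2$ vanishes identically. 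The same identity also shows $(\tr A)^2 \le \E \ipHS{X}{A}^2$. Next, a small-ball estimate: by Hanson--Wright together with sub-Gaussianity, $\E \ipHS{X}{A}^4 \lesssim \norm{A}_F^4 + (\tr A)^4 \lesssim (\E \ipHS{X}{A}^2)^2$, so the Paley--Zygmund inequality yields absolute constants $\delta_0, q_0 > 0$ with $\inf_{A \in E} \Pr\parens*{\abs{\ipHS{X}{A}} \ge 2\delta_0} \ge q_0$.

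Now the small-ball reduction. Pointwise, $\frac1n \sum_i \ipHS{X_i}{A}^2 \ge \delta_0^2 \cdot \frac1n \sum_i \mathbf 1\{\abs{\ipHS{X_i}{A}} \ge \delta_0\} \ge \delta_0^2 \cdot \frac1n \sum_i \phi(\ipHS{X_i}{A})$, where $\phi$ is a fixed $\delta_0^{-1}$-Lipschitz function with $\phi(0)=0$ and $\mathbf 1\{\abs{t} \ge 2\delta_0\} \le \phi(t) \le \mathbf 1\{\abs{t} \ge \delta_0\}$; in particular $\E \phi(\ipHS{X}{A}) \ge q_0$ for all $A \in E$. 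Writing $Z_\varepsilon \coloneqq \frac1n \sum_i \varepsilon_i X_i$ with $\varepsilon_i$ i.i.d.\ Rademacher, symmetrization and Talagrand's contraction inequality bound the expected empirical process $\E \sup_{A \in E} \abs[\big]{\frac1n \sum_i (\phi(\ipHS{X_i}{A}) - \E \phi(\ipHS{X}{A}))}$ by $\frac{c_1}{\delta_0} \E \sup_{A \in E} \abs{\ipHS{Z_\varepsilon}{A}} \le \frac{c_1 C}{\delta_0} \E \sup_{\normMixeds{A} \le 1} \ipHS{Z_\varepsilon}{A}$. The matrices $\varepsilon_i X_i = \varepsilon_i\, x_i \otimes x_i$ are i.i.d., mean zero, and (by sub-Gaussianity of $x$) satisfy the hypotheses of \Cref{lem:emp_proc} with $\sigma$ and $M$ absolute constants and $\eta = 0$; the bound there also holds in expectation (its proof is by generic chaining), so the above is $\lesssim C \sqrt{s \log(ep/s)/n}$, the $n^{c-1} s \log(ep/s)$ term being lower-order once $n \gtrsim s \log(ep/s)$. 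Finally, $A \mapsto \frac1n \sum_i \phi(\ipHS{X_i}{A})$ takes values in $[0,1]$ with bounded differences $1/n$, so McDiarmid's inequality gives, with probability at least $1 - e^{-bn}$,
\[
\inf_{A \in E} \frac1n \sum_i \phi(\ipHS{X_i}{A}) \;\ge\; q_0 - \frac{c_1 C}{\delta_0} \sqrt{\frac{s \log(ep/s)}{n}} - \frac{q_0}{4} \;\ge\; \frac{q_0}{2},
\]
the last step using $n \gtrsim (C/\delta_0 q_0)^2\, s \log(ep/s)$; combined with the reduction this yields $\inf_{A \in E} \frac1n \sum_i \ipHS{X_i}{A}^2 \ge \delta_0^2 q_0/2 > 0$, and unnormalizing proves the claim. (All implied constants, including in the sample-size requirement, depend on $C$ and on the constants of \Cref{assump:spr_meas}.)

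The main obstacle is that the heavy-tailed summands $\ipHS{X_i}{A}^2$ preclude any direct uniform concentration argument, which forces the small-ball route. Within it the two delicate points are: (i) the small-ball probability, where Paley--Zygmund needs both the fourth-moment \emph{lower} bound of \Cref{assump:spr_meas} and an upper bound on $\E (x^{\top} A x)^4$, for which one must notice that the potentially large quantity $(\tr A)^2$ is itself dominated by $\E (x^{\top} A x)^2$; and (ii) passing from the high-probability statement of \Cref{lem:emp_proc} to the expectation bound on the Rademacher width of the $\normMixeds{\cdot}$-ball used above --- this is what injects the $\sqrt{s \log(ep/s)/n}$ rate, and hence the $s \log(ep/s)$ sample complexity, into the argument.
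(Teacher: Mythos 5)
Your proof is correct and follows essentially the same route as the paper's: a fourth-moment computation plus Hanson--Wright and Paley--Zygmund to get a uniform small-ball estimate, followed by Mendelson's small-ball method reducing the problem to the expected Rademacher width of the $\normMixeds{\cdot}$-ball, which is controlled by the empirical process lemma. The only differences are cosmetic --- you unpack the contraction/McDiarmid machinery that the paper imports wholesale from Tropp's Proposition 5.1, and you invoke \Cref{lem:emp_proc} with $\eta=0$ where the paper uses the expectation bound stated in \Cref{lem:emp_proc_exp} --- and your explicit restriction to symmetric $A$ is a sound (indeed necessary) reading of the moment computation.
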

	\Cref{lem:subgrad_ineq} is proved in \Cref{app:mixednorm}. \Cref{lem:emp_proc,lem:rsc} are proved in \Cref{app:empirical_proofs}.
	With these, we can prove the sparse phase retrieval error bound:	
	\begin{proof}[Proof of \Cref{thm:pr}]
		Applying \Cref{lem:emp_proc} to the random matrices $G_i = \xi_i X_i$, we can choose $\lambda$ according to the theorem statement with large enough constant
		so that, with probability at least $1 - e^{-s} (s/p)^s$,
		\[
		\sup_{\normMixeds{A} \leq 1} \ipHS*{\frac{1}{n} \sum_{i=1}^n \xi_i X_i }{A} \leq \frac{\lambda}{20}.
		\]
		Furthermore, by \Cref{lem:rsc}, for $n \gtrsim s \log \frac{ep}{s}$ (with large enough constant),
		we have, with probability at least $1 - e^{-bn}$,
		\[
		\frac{1}{n} \sum_{i=1}^n \ipHS{X_i}{A}^2 \gtrsim \norm{A}_F^2
		\]
		for all $A$ satisfying $\normMixeds{A} \leq 100 \norm{A}_F$.
		
		The intersection of these events occurs with probability at least $1 - e^{-s} (s/p)^s - e^{-bn}$.
		In what follows, we assume this holds.
		
		Let $\Bhat$ be the solution to \eqref{eq:pr_opt_main}.
		Writing $F(B)$ as the objective function,
		the convexity of the optimization problem implies that
		\[
		0 \leq \ipHS{\nabla F(\Bhat)}{B^* - \Bhat} = \frac{1}{n} \sum_{i=1}^n (y_i - \ipHS{X_i}{\Bhat})\ipHS{X_i}{\Bhat - B^*} + \lambda \ipHS{W_{\Bhat}}{B^* - \Bhat},
		\]
		for any $W_{\Bhat} \in \partial \normMixeds{\Bhat}$.
		By the monotonicity of (sub)gradients of convex functions,
		we have that, for any $W \in \partial \normMixeds{B^*}$,
		$\ipHS{W - W_{\Bhat}}{B^* - \Bhat} \geq 0$, and therefore
		\[
		0 \leq \frac{1}{n} \sum_{i=1}^n (y_i - \ipHS{X_i}{\Bhat}) \ipHS{X_i}{\Bhat - B^*} + \lambda \ipHS{W}{B^* - \Bhat}.
		\]
		Let $H = \Bhat - B^*$.
		Using the fact that $(y_i - \ipHS{X_i}{\Bhat}) \ipHS{X_i}{\Bhat - B^*} = \xi_i \ipHS{X_i}{H} - \ipHS{X_i}{H}^2$, we have
		\[
		\frac{1}{n} \sum_{i=1}^n \ipHS{X_i}{H}^2 \leq \frac{1}{n} \sum_{i=1}^n \xi_i \ipHS{X}{H} - \lambda \ipHS{W}{H}
		\leq \frac{\lambda}{20} \normMixeds{H} - \lambda \ipHS{W}{H}.
		\]
		By \Cref{lem:subgrad_ineq}, there exists $W \in \partial \normMixeds{B^*}$ such that
		\[
		\ipHS{W}{H} \geq \frac{1}{10} \normMixeds{H} - 5 \norm{H}_F.
		\]
		Therefore, we have
		\[
		\frac{1}{n} \sum_{i=1}^n \ipHS{X_i}{H}^2 \leq 5 \lambda \norm{H}_F - \frac{\lambda}{20} \normMixeds{H}.
		\]
		Because the left side of this inequality is nonnegative,
		we have $\normMixeds{H} \leq 100 \norm{H}_F$.
		Then, by restricted lower isometry, we have
		\[
		\norm{H}_F^2 \lesssim \lambda \norm{H}_F.
		\]
		The result immediately follows.
		
	\end{proof}

	\subsection{Sparse PCA proof sketch}
	\label{sec:pca_proof_sketch}
	The proof of \Cref{thm:pca} is somewhat messier than the proof of \Cref{thm:pr} above,
	so we do not go into all of the details here. We refer the reader to \Cref{app:pca_proof} for the full proof.
	
	If $\Phat$ is an optimal solution of \eqref{eq:pca_opt},
	one can obtain, similarly to the proof of \Cref{thm:pr},
	that
	\[
		\ipHS{\Sigmahat}{H} \geq \lambda \ipHS{W}{H}
	\]
	for any $W \in \partial \normMixeds{P_1}$, where $H = \Phat - P_1$.
	Choosing $W$ according to \Cref{lem:subgrad_ineq},
	we obtain
	\[
		\ipHS{\Sigmahat}{H} \geq \lambda \parens*{ \frac{1}{10} \normMixeds{H} - 5 \norm{H}_F }.
	\]
	By analysis similar to \Cref{lem:emp_proc},
	one can show that
	\[
		\abs{\ipHS{\Sigmahat - \Sigma}{H}}
		\lesssim \sqrt{\sigma_1 \sigma_2 \frac{s \log (ep/s)}{n}} \normMixeds{H} + \sigma_1 \sqrt{\frac{t}{n}} \abs{\ipHS{H}{P_1}}
	\]
	with probability at least $1 - e^{-t} - 3 e^{-s} (s/p)^s$
	when $n \gtrsim s \log (ep/s)$.
	For $\lambda$ chosen so that the coefficient of $\normMixeds{H}$ above is $\leq \lambda/10$, we get, on this event,
	\[
		\ipHS{\Sigma}{H} \gtrsim -\lambda \norm{H}_F -\sigma_1 \sqrt{\frac{t}{n}} \abs{\ipHS{H}{P_1}}.
	\]
	Now, note that because $\norm{\Phat}_* \leq 1$, we have the following:
	\begin{itemize}
		\item $\abs{\ipHS{H}{P_1}} = 1 - \ipHS{\Phat}{P_1}$, and
		\item $\ipHS{\Sigma}{H} = \sigma_1 (\ipHS{\Phat}{P_1} - 1) + \ipHS{\Sigma_2}{\Phat} \leq \sigma_1 (\ipHS{\Phat}{P_1} - 1) + \sigma_2 (1 - \ipHS{\Phat}{P_1})$.
	\end{itemize}
	Then, using the assumption that $n \gtrsim \frac{\sigma_1^2}{(\sigma_1 - \sigma_2)^2} t$, we get
	\[
		(\sigma_1 - \sigma_2)(1 - \ipHS{\Phat}{P_1})
		\lesssim \parens*{\sigma_1 - \sigma_2 - \sigma_1 \sqrt{\frac{t}{n}}} (1 - \ipHS{\Phat}{P_1}) \lesssim \lambda \norm{H}_F.
	\]
	Finally, one can show that $\norm{\Phat}_F \leq \norm{\Phat}_* \leq 1$ implies $\norm{H}_F^2 \lesssim 1 - \ipHS{\Phat}{P_1}$, which immediately gives the result. 
	
	\section{Computational limitations and a practical algorithm for phase retrieval}
	\label{sec:opt}
	Although the mixed atomic norm $\normMixeds{\cdot}$ is a powerful theoretical tool,
	it is not clear how to calculate (let alone optimize) it for a general matrix in practice,
	since it is defined as an infimum over infinite sets of possible factorizations.
	
	A warning that computations with these atomic regularizers may be difficult in general is that they can be used to get $O_{\log}(s)$ sample complexity for sparse PCA,
	which, as discussed in \Cref{sec:pca_intro}, is widely believed to be impossible with efficient algorithms.

	In this section, we will analyze the convex programs more carefully,
	with a particular focus on phase retrieval.%
	\footnote{While our algorithmic approach led to strong empirical performance for sparse phase retrieval, the approach was less effective for sparse PCA. We leave a more thorough investigation of this phenomenon for future work.}
	We will analyze the optimality conditions via a dual problem and thereby develop a heuristic algorithm.
	
	This problem was studied in greater generality in \cite{Haeffele2020}. Their Corollary 1 is similar to our \Cref{cor:opt_factored}.
	However, our analysis of the dual problem is quite different from their perturbation argument,
	and we can much more easily apply our method to the sparse PCA optimization problem \eqref{eq:pca_opt} with its inequality constraint.
	Furthermore, we think the reader will benefit from our deriving the optimality conditions from more elementary principles for the particular problem we are trying to solve.

	\subsection{Factorization, duality, and optimality conditions}
	\label{sec:optimality}
	To move toward a practical algorithm, we consider optimizing \eqref{eq:pr_opt_main} in factored form;
	rather than optimizing over $B$ directly, we optimize over the factors $\{u_k, v_k\}$ of a factorization $B = \sum_k u_k \otimes v_k$.
	Then \eqref{eq:pr_opt_main} is equivalent to
	\begin{equation}
		\label{eq:pr_opt_factored}
		\min_{\{u_k, v_k\} \subset \R^p}~\frac{1}{2n} \sum_{i=1}^n \parens*{y_i - \ipHS*{X_i}{\sum_k u_k \otimes v_k}}^2 + \lambda \sum_k \theta_s(u_k, v_k).
	\end{equation}
	The obvious drawback to this form is that the optimization problem is no longer convex;
	therefore, it is not clear whether finding a global minimum is computationally feasible.

	To determine how well a factored algorithm works (e.g., to certify optimality),
	we examine a dual problem to \eqref{eq:pr_opt_main}.
	We formulate the dual via a trick found in \cite{Zhang2002}:
	note that $b^2/2 = \max_a~ab - a^2/2$ (achieved if and only if $a = b$),
	and therefore
	\begin{align*}
		&\min_{B \in \R^{p \times p}}~\frac{1}{2n} \sum_{i=1}^n (y_i - \ipHS{X_i}{B})^2 + \lambda \normMixeds{B} \\
		&\qquad= \min_{B \in \R^{p \times p}}~\frac{1}{2n} \sum_{i=1}^n \max_{\alpha_i}~\parens*{ 2\alpha_i (y_i - \ipHS{X_i}{B}) - \alpha_i^2} + \lambda  \normMixeds{B} \\
		&\qquad\geq \max_{\alpha \in \R^n}~\brackets*{  \frac{1}{n} \sum_{i=1}^n  \parens*{\alpha_i y_i - \frac{\alpha_i^2}{2}} + \min_{B \in \R^{p \times p}}~\parens*{ \lambda  \normMixeds{B} - \frac{1}{n} \sum_{i=1}^n \alpha_i \ipHS{X_i}{B}} },
	\end{align*}
	where the inequality comes from swapping the maximum over $\alpha = (\alpha_1, \dots, \alpha_n)$ and the minimum over $B$.
	
	Define the dual norm $\normMixeds{\cdot}^*$ by
	\[
		\normMixeds{Z}^* = \max_{\substack{B \in \R^{p \times p} \\ \normMixeds{B} \leq 1}}~\ipHS{Z}{B}.
	\]
	Because $\normMixeds{\cdot}^*$ is nonnegatively homogeneous,
	\[
		\min_{B \in \R^{p \times p}}~\parens*{ \lambda  \normMixeds{B} - \ipHS*{\frac{1}{n} \sum_{i=1}^n \alpha_i X_i}{B} }
		= \begin{cases}
			0 & \text{if } \normMixeds*{ \frac{1}{n} \sum_{i=1}^n \alpha_i X_i }^* \leq \lambda \\
			-\infty & \text{otherwise}.
		\end{cases}
	\]
	Therefore, a dual formulation of \eqref{eq:pr_opt_main} is the convex problem
	\begin{equation}
		\label{eq:pr_dual}
		\max_{\alpha \in \R^n}~\parens*{\frac{1}{n} \sum_{i=1}^n  \alpha_i y_i - \frac{\alpha_i^2}{2}}~\mathrm{s.t.}~\normMixeds*{ \frac{1}{n} \sum_{i=1}^n \alpha_i X_i }^* \leq \lambda.
	\end{equation}
	
	Before we go further, note that,
	\[
		\normMixeds{Z}^* = \max_{\substack{u, v \in \R^p\\ \theta_s(u, v) \leq 1}}~\ip{Z u}{v}.
	\]
	To see this, note that
	\begin{align*}
		\normMixeds{Z}^*
		&= \sup~\braces*{\ipHS{Z}{B} : B \in \R^{p \times p}, \{u_k, v_k\} \subset \R^p, B = \sum_k u_k \otimes v_k,\ \sum_k \theta_s(u_k, v_k) \leq 1} \\
		&= \sup~\braces*{ \sum_k \ip{Z u_k}{v_k} : \{u_k, v_k\} \subset \R^p, \sum_k \theta_s(u_k, v_k) \leq 1 }\\
		&= \sup~\braces*{ \sum_{k=1}^K \ip{Z u_k}{v_k}: K \geq 1, \{u_k, v_k\}_{k=1}^K \subset \R^p, \sum_{k=1}^K \theta_s(u_k, v_k) \leq 1 }.
	\end{align*}
	For any finite sequence $\{ u_k, v_k \}_{k=1}^K$ with $\sum_{k=1}^K \theta_s(u_k, v_k) \leq 1$,
	if we let $k^* = \argmax_{1 \leq k \leq K}~\frac{\ip{Z u_k}{v_k}}{\theta_s(u_k, v_k)}$
	and set $\utl = \frac{u_{k^*}}{\sqrt{\theta_s(u_{k^*}, v_{k^*})}}$ and $\vtl = \frac{v_{k^*}}{\sqrt{\theta_s(u_{k^*}, v_{k^*})}}$,
	we will always have $\ip{Z \utl}{\vtl} \geq \sum_{k=1}^K \ip{Z u_k}{u_k}$.
	Therefore,
	\[
		\normMixeds{Z}^* = \sup~\{ \ip{Zu}{v} : \theta_s(u,v) \leq 1 \}.
	\]
	We can replace the supremum by a maximum because the objective function is continuous and the constraint set is compact.
	
	Returning to the optimization problem,
	note that a feasible point $\alpha$ for the dual problem gives us a \emph{lower} bound on the primal optimal value.
	If there exist $B \in \R^{p \times p}$, $\alpha \in \R^n$ such that $\alpha$ is feasible and the two objective functions are \emph{equal}, then we know $B$ is optimal for the primal problem.
	More precisely, $(B, \alpha)$ is an optimal primal-dual pair if and only if
	\begin{enumerate}[label=(\alph*)]
		\item the primal objective function at $B$ equals the dual objective functions at $\alpha$, and
		\item $\alpha$ is feasible, i.e., $\normMixeds*{ \frac{1}{n} \sum_{i=1}^n \alpha_i X_i }^* \leq \lambda$.
	\end{enumerate}
	From the derivation of the dual problem above, (a) requires $\alpha_i = y_i - \ipHS{X_i}{B}$.
	Making this substitution, setting the objective functions equal, and simplifying gives one direction of the following result:
	\begin{lemma}
		\label{lem:optimal}
		$B$ solves \eqref{eq:pr_opt_main} if and only if both of the following hold:
		\begin{enumerate}[label=(\alph*)]
			\item $\frac{1}{n} \sum_{i=1}^n (y_i - \ipHS{X_i}{B}) \ipHS{X_i}{B} = \lambda \normMixeds{B}$.
			\item $\normMixeds*{ \frac{1}{n} \sum_{i=1}^n (y_i - \ipHS{X_i}{B}) X_i }^* \leq \lambda$.
		\end{enumerate}
	\end{lemma}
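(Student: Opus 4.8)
The plan is to establish both directions from the weak-duality computation already set up in the text, together with the standard fact that for the convex program~\eqref{eq:pr_opt_main} strong duality holds and is characterized by the vanishing of the duality gap. For the ``only if'' direction, suppose $B$ solves the primal. Since the primal objective is continuous, coercive (the regularizer $\normMixeds{\cdot}$ is a norm up to equivalence, and the quadratic data term is bounded below), and convex, and since the feasible set is all of $\R^{p\times p}$ (so Slater's condition is trivially satisfied for the reformulated problem), strong duality holds: there exists a dual-optimal $\alpha^\star$ with equal objective values. Tracing through the derivation of~\eqref{eq:pr_dual}, equality in the step $b^2/2 = \max_a (ab - a^2/2)$ forces $\alpha_i^\star = y_i - \ipHS{X_i}{B}$ for each $i$, and feasibility of $\alpha^\star$ is exactly condition~(b). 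Substituting $\alpha_i^\star = y_i - \ipHS{X_i}{B}$ into the equality of the two objective functions, expanding $\frac{1}{n}\sum_i(\alpha_i^\star y_i - \tfrac{(\alpha_i^\star)^2}{2}) = \frac{1}{n}\sum_i(y_i - \ipHS{X_i}{B})^2/2 + \frac{1}{n}\sum_i(y_i - \ipHS{X_i}{B})\ipHS{X_i}{B}$ and comparing with the primal value $\frac{1}{2n}\sum_i(y_i - \ipHS{X_i}{B})^2 + \lambda\normMixeds{B}$, the squared-error terms cancel and we are left with precisely condition~(a), $\frac{1}{n}\sum_i(y_i - \ipHS{X_i}{B})\ipHS{X_i}{B} = \lambda\normMixeds{B}$.

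For the ``if'' direction, assume (a) and (b) hold for a given $B$, and set $\alpha_i = y_i - \ipHS{X_i}{B}$. Condition~(b) says $\alpha$ is dual-feasible. A direct algebraic check (the reverse of the computation above) shows that the dual objective at $\alpha$ equals the primal objective at $B$: the squared-error pieces match, and the cross term $\frac{1}{n}\sum_i \alpha_i \ipHS{X_i}{B}$ equals $\lambda\normMixeds{B}$ by~(a). Since the dual value at any feasible point lower-bounds the primal optimum (weak duality, established in the text via the min--max swap), having a feasible dual point whose value equals the primal value at $B$ certifies that $B$ is primal-optimal.

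The only mildly delicate point is verifying that strong duality (hence the existence of a dual optimizer attaining the primal value) actually holds, since the argument in the text only gives the inequality $\geq$ from swapping min and max. I would justify this either by invoking Sion's minimax theorem (the inner function $(B,\alpha)\mapsto \frac{1}{2n}\sum_i(2\alpha_i(y_i - \ipHS{X_i}{B}) - \alpha_i^2) + \lambda\normMixeds{B}$ is convex in $B$, concave in $\alpha$, and one can restrict $\alpha$ to a compact set at optimum since the dual objective is a strictly concave quadratic in $\alpha$), or, more elementarily, by the standard Fenchel--Rockafellar strong-duality theorem applied to the sum of the least-squares term and $\lambda\normMixeds{\cdot}$, whose domains are all of $\R^{p\times p}$ so the qualification condition is automatic. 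I expect this verification to be the main (though routine) obstacle; everything else is bookkeeping with the explicit formulas. Note also that condition~(a) implicitly records the subgradient optimality relation $\frac{1}{n}\sum_i(y_i - \ipHS{X_i}{B})X_i \in \lambda\,\partial\normMixeds{B}$ combined with the homogeneity identity $\ipHS{W}{B} = \normMixeds{B}$ for $W \in \partial\normMixeds{B}$; one could alternatively prove the lemma directly from first-order optimality $0 \in \partial F(B)$, but the dual route is cleaner given the setup already in place.
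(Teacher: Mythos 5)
Your proof is correct, and the sufficiency direction (conditions imply optimality) is exactly the paper's argument: weak duality plus the explicit matching of primal and dual objectives at $\alpha_i = y_i - \ipHS{X_i}{B}$. Where you genuinely diverge is the necessity direction. You invoke strong duality with dual attainment (Fenchel--Rockafellar, justified since both the loss and $\lambda\normMixeds{\cdot}$ are finite-valued convex functions on all of $\R^{p\times p}$) and then read conditions (a) and (b) off the forced equalities in the weak-duality chain. The paper instead argues by explicit first-order perturbation: if (b) fails, there are $\ubr,\vbr$ with $\ip{Z\ubr}{\vbr} > \lambda\theta_s(\ubr,\vbr)$ and the step $B \mapsto B + \epsilon\,\ubr\otimes\vbr$ decreases the objective; if (b) holds but (a) fails, then $\ipHS{Z}{B} < \lambda\normMixeds{B}$, so some factor $(u_k,v_k)$ of an optimal decomposition satisfies $\ip{Zu_k}{v_k} < \lambda\theta_s(u_k,v_k)$ and shrinking that factor decreases the objective. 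Each approach buys something: the paper's descent-direction argument is elementary and is reused directly in \Cref{sec:practical_alg} to build the algorithm (the remark after the proof makes this explicit), but it must assume (or approximate) the existence of an optimal factorization of $B$; your dual route needs a named strong-duality theorem but sidesteps the attainment issue entirely and extends more cleanly to constrained variants like \eqref{eq:pca_opt}. Your verification of dual attainment is the only nontrivial extra step, and your justification of it is adequate.
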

	\begin{proof}
	We have already shown that these conditions are \emph{sufficient} for optimality.
	To see the other direction (that these conditions are \emph{necessary} for optimality),
	note that $Z \coloneqq \frac{1}{n} \sum_{i=1}^n (y_i - \ipHS{X_i}{B}) X_i$ is the negative gradient of the empirical loss at $B$.
	Because condition (b) is equivalent to
	\[
		\ip{Z u}{v} \leq \lambda \theta_s(u, v)\ \forall u \in \R^p,
	\]
	if (b) does not hold, there exists some $\ubr, \vbr \in \R^p$ such that $\ip{Z \ubr}{\vbr} > \lambda \theta_s(\ubr, \vbr)$,
	and then we can decrease the objective function by moving to $B + \epsilon \ubr \otimes \vbr$ for some sufficiently small $\epsilon > 0$. Thus (b) is a necessary condition for the optimality of $B$.
	
	Now suppose (b) holds, but (a) does not.
	Condition (b) implies that $\ipHS{Z}{B} \leq \lambda \normMixeds{B}$,
	so we must have $\ipHS{Z}{B} < \lambda \normMixeds{B}$.
	
	Let $B = \sum_k u_k \otimes v_k$ be an optimal factorization with respect to the definition of $\normMixeds{B}$, that is, such that $\normMixeds{B} = \sum_k \theta_s(u_k, v_k)$ (we assume, for clarity, that an optimal factorization exists---if not, we could use an approximation argument).
	There must be some $u_k, v_k$ such that $\ip{Z u_k}{v_k} < \lambda \theta_s(u_k, v_k)$.
	Then, modifying $B$ by replacing $(u_k, v_k)$ with $((1-\epsilon) u_k, (1-\epsilon) v_k)$ for some sufficiently small $\epsilon > 0$ will decrease the objective function.
	
	\end{proof}
	Note that the proof of \Cref{lem:optimal} gives us an explicit way to improve the objective function whenever one of the optimality conditions is not satisfied.
	
	Applying our derivation to the factored optimization problem,
	we get the following result:
	\begin{corollary}
		\label{cor:opt_factored}
		$B$ solves \eqref{eq:pr_opt_main} and $B = \sum_k u_k \otimes v_k$ is an optimal factorization with respect to $\normMixeds{\cdot}$ (equivalently, $\{u_k, v_k\}$ solve \eqref{eq:pr_opt_factored})
		if and only if the following hold:
		\begin{enumerate}[label=(\alph*)]
			\item For all $k$, $\frac{1}{n} \sum_{i=1}^n (y_i - \ipHS{X_i}{B}) \ip{X_i u_k}{v_k} = \lambda \theta_s(u_k, v_k)$.
			\item $\normMixeds*{ \frac{1}{n} \sum_{i=1}^n (y_i - \ipHS{X_i}{B}) X_i }^* \leq \lambda$; equivalently, for all $u,v \in \R^p$,
			\[
				\frac{1}{n} \sum_{i=1}^n (y_i - \ipHS{X_i}{B}) \ip{X_i u}{v} \leq \lambda \theta_s(u, v).
			\]
		\end{enumerate}
	\end{corollary}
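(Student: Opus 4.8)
The plan is to obtain \Cref{cor:opt_factored} as a direct consequence of \Cref{lem:optimal} together with the atomic structure of $\normMixeds{\cdot}$ already established above. Abbreviate $Z \coloneqq \frac{1}{n} \sum_{i=1}^n (y_i - \ipHS{X_i}{B}) X_i$, the negative gradient of the empirical loss at $B$. Condition (b) of \Cref{lem:optimal} reads $\normMixeds{Z}^* \leq \lambda$, and by the variational identity $\normMixeds{Z}^* = \max\{\ip{Z u}{v} : \theta_s(u,v) \leq 1\}$ proved just before \Cref{lem:optimal}, this is exactly $\ip{Z u}{v} \leq \lambda \theta_s(u,v)$ for all $u,v \in \R^p$. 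So condition (b) of the corollary is literally condition (b) of \Cref{lem:optimal}, merely restated, and requires no argument. The entire content of the corollary is therefore the equivalence, under (b), between \Cref{lem:optimal}(a) — i.e.\ $\ipHS{Z}{B} = \lambda \normMixeds{B}$ — and the per-factor equalities $\ip{Z u_k}{v_k} = \lambda \theta_s(u_k, v_k)$ of corollary-(a), for a factorization $B = \sum_k u_k \otimes v_k$ that is optimal with respect to $\normMixeds{\cdot}$.

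The mechanism is a tightness analysis of a short chain of inequalities. Expanding the Hilbert--Schmidt inner product over the factorization gives the identity $\ipHS{Z}{B} = \sum_k \ip{Z u_k}{v_k}$, and (b) applied term by term gives $\ip{Z u_k}{v_k} \leq \lambda \theta_s(u_k, v_k)$ for every $k$, so $\ipHS{Z}{B} \leq \lambda \sum_k \theta_s(u_k, v_k)$. For the forward direction, if $B$ solves \eqref{eq:pr_opt_main} with $\sum_k u_k \otimes v_k$ an optimal factorization, then $\sum_k \theta_s(u_k, v_k) = \normMixeds{B}$ and \Cref{lem:optimal}(a) forces $\ipHS{Z}{B} = \lambda \normMixeds{B} = \lambda \sum_k \theta_s(u_k, v_k)$; hence $\sum_k [\lambda \theta_s(u_k, v_k) - \ip{Z u_k}{v_k}] = 0$ with each summand nonnegative, so each vanishes, which is corollary-(a). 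Conversely, assume corollary-(a) and (b). Summing (a) over $k$ gives $\ipHS{Z}{B} = \lambda \sum_k \theta_s(u_k, v_k) \geq \lambda \normMixeds{B}$, while (b) gives $\ipHS{Z}{B} \leq \lambda \normMixeds{B}$ (using $B/\normMixeds{B}$ as a feasible competitor when $\normMixeds{B} \neq 0$); equality in both directions simultaneously yields \Cref{lem:optimal}(a), so $B$ solves \eqref{eq:pr_opt_main}, and $\sum_k \theta_s(u_k, v_k) = \normMixeds{B}$, so the factorization is optimal.

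Finally I would record the promised equivalence with the factored problem \eqref{eq:pr_opt_factored}: for any $\{u_k, v_k\}$, putting $B = \sum_k u_k \otimes v_k$ shows the factored objective equals the objective of \eqref{eq:pr_opt_main} at $B$ plus $\lambda\bigl(\sum_k \theta_s(u_k, v_k) - \normMixeds{B}\bigr) \geq 0$; taking the infimum over factorizations of a fixed $B$ shows the two problems share the same optimal value, and a global minimizer of \eqref{eq:pr_opt_factored} is precisely a family $\{u_k, v_k\}$ whose sum is optimal for \eqref{eq:pr_opt_main} with $\sum_k \theta_s(u_k,v_k) = \normMixeds{B}$. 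I do not expect a real obstacle here — the proof is just the observation that the inequality chain above is tight exactly when both optimality conditions hold. The only points needing mild care are the degenerate case $B = 0$, where $\normMixeds{B} = 0$, conditions (a) hold vacuously, and $B = 0$ is optimal iff $\normMixeds{Z}^* \leq \lambda$; and the case where the infimum defining $\normMixeds{B}$ is not attained, which is dispatched by the same approximation argument already invoked in the proof of \Cref{lem:optimal}.
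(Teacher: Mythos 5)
Your proposal is correct and follows essentially the same route as the paper: the paper derives the corollary directly from \Cref{lem:optimal} by noting that condition (b) permits splitting the aggregate equality $\ipHS{Z}{B} = \lambda \normMixeds{B}$ into per-factor equalities, and your tightness-of-the-inequality-chain argument (each $\ip{Z u_k}{v_k} \leq \lambda\theta_s(u_k,v_k)$ summing to an equality forces termwise equality, and conversely) is precisely the detailed version of that step. The additional bookkeeping you include — the equivalence with the factored problem \eqref{eq:pr_opt_factored} and the degenerate cases — matches what the paper leaves implicit.
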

	Note that we have broken out condition (a) into individual equalities (rather than equating the sums of each side); condition (b) allows us to do this.
	It is even easier to find a descent direction when one of these conditions fails to hold,
	since the objective function of \eqref{eq:pr_opt_factored}
	already depends explicitly on the vectors $u_k, v_k$.
	
	Note that condition (a) is much easier to verify than condition (b).
	We refer to $\{ u_k, v_k \}$ as a \emph{first-order stationary point} if it satisfies condition (a),
	since this is equivalent to a zero subgradient on the (nonzero) $u_k$'s and $v_k$'s
	(cf.\ Proposition 2 in \cite{Haeffele2020}).
	
%	\subsubsection{Dual problem and optimality conditions for sparse PCA}
%	\label{sec:spca_optimality}
	Although we are not focusing on sparse PCA here,
	it may be interesting to compare \Cref{cor:opt_factored} to what we get for sparse PCA,
	particularly as PCA may be a fundamentally more difficult problem.
	A dual problem to \eqref{eq:pca_opt} is
	\[
		\argmax_{Z \in \R^{p \times p}}~ - \norm{Z}~\text{s.t}~\normMixeds{\Sigmahat - Z}^* \leq \lambda.
	\]
	The following lemma gives (redundant) optimality conditions:
	\begin{lemma}
		\label{lem:opt_pca}
		$P$ solves \eqref{eq:pca_opt} if and only if $\norm{P}_* = 1$ and there exists $Z \in \R^{p \times p}$ such that
		\begin{enumerate}
			\item $\normMixeds{\Sigmahat - Z}^* \leq \lambda$,
			\item $\ipHS{\Sigmahat - Z}{P} = \lambda \normMixeds{P}$,
			\item $\ipHS{Z}{P} = \norm{Z} = \norm{Z} \norm{P}_*$, and
			\item $\norm{Z} = \ipHS{\Sigmahat}{P} - \lambda \normMixeds{P}$.
		\end{enumerate}
	\end{lemma}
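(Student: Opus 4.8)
The plan is to obtain the four conditions from convex duality applied to \eqref{eq:pca_opt}, in the same spirit as the derivation preceding \Cref{lem:optimal}, but now carrying along the inequality constraint. Write $C = \{P \in \R^{p\times p} : \norm{P}_* \leq 1\}$ for the nuclear-norm ball. First I would set up the dual: since the convex conjugate of $\iota_C$ is the spectral norm $\norm{\cdot}$ (the dual norm of $\norm{\cdot}_*$), we have $\iota_C(P) = \sup_Z\,[\ipHS{Z}{P} - \norm{Z}]$; substituting this into the objective of \eqref{eq:pca_opt}, swapping the resulting $\min_P\sup_Z$, and then using the positive homogeneity of $\normMixeds{\cdot}$ exactly as in the derivation before \eqref{eq:pr_dual}, the inner minimization over $P$ equals $0$ when $\normMixeds{\Sigmahat - Z}^* \leq \lambda$ and $-\infty$ otherwise, which reproduces the stated dual $\sup_Z\{-\norm{Z} : \normMixeds{\Sigmahat - Z}^* \leq \lambda\}$. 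Since $P = 0$ lies in the interior of $C$, Slater's condition gives strong duality with attainment on both sides. Moreover, in the regime of \Cref{thm:pca} the primal optimal value is strictly negative (e.g.\ $P = v_1 \otimes v_1$ beats $P = 0$), and the primal objective $-\ipHS{\Sigmahat}{P} + \lambda\normMixeds{P}$ is positively homogeneous, so every optimal $P$ must satisfy $\norm{P}_* = 1$: otherwise scaling $P$ toward the boundary of $C$ strictly decreases a negative quantity.

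Next I would read off the conditions via subgradient optimality, the cleanest route. $P$ solves \eqref{eq:pca_opt} if and only if $0 \in -\Sigmahat + \lambda\,\partial\normMixeds{P} + N_C(P)$, and the subdifferential sum rule applies here because $-\ipHS{\Sigmahat}{\cdot}$ is linear, $\normMixeds{\cdot}$ is finite everywhere, and $C$ has nonempty interior. Hence there are $W \in \partial\normMixeds{P}$ and $Z \in N_C(P)$ with $\Sigmahat - Z = \lambda W$. The standard description of the subdifferential of a norm, $W \in \partial\normMixeds{P} \iff \normMixeds{W}^* \leq 1$ and $\ipHS{W}{P} = \normMixeds{P}$, when multiplied through by $\lambda$, becomes conditions 1 and 2. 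And $N_C(P) = \{Z : \ipHS{Z}{P} = \norm{Z}\}$, which forces $Z = 0$ when $\norm{P}_* < 1$; together with $\norm{P}_* = 1$ this is condition 3 (the factor $\norm{P}_*$ appearing there being vacuous). Condition 4 is then automatic: subtracting $\ipHS{Z}{P}$ from condition 2 gives $\ipHS{\Sigmahat}{P} - \lambda\normMixeds{P} = \ipHS{Z}{P}$, and condition 3 identifies $\ipHS{Z}{P}$ with $\norm{Z}$. For the converse, if $\norm{P}_* = 1$ and some $Z$ satisfies conditions 1--4, then $Z$ is dual feasible and the primal value $-\ipHS{\Sigmahat}{P} + \lambda\normMixeds{P}$ equals $-\norm{Z}$ (condition 4 rearranged), which is the dual value at $Z$; as $P$ is primal feasible, weak duality forces $P$ optimal.

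I expect the only genuine subtlety --- each ingredient above being routine --- to be the bookkeeping around the inequality constraint: recognizing condition 3, which is written as three equalities, as precisely ``$Z$ lies in the normal cone $N_C(P)$,'' and dealing with the degenerate case $Z = 0$, $\norm{P}_* < 1$. This is why the ``only if'' direction genuinely needs the optimal value to be negative, so that $\norm{P}_* = 1$ at every optimum; this holds under the hypotheses of \Cref{thm:pca}, and I would state it explicitly. The duality computation, the subgradient sum rule, and the dual-norm formula for $\partial\normMixeds{\cdot}$ (valid since $\normMixeds{\cdot}$ is a norm) are all standard given what has been developed.
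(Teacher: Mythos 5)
Your argument is correct, and it supplies a proof the paper does not actually write down: \Cref{lem:opt_pca} is stated without proof, immediately after the dual problem, so there is nothing to compare line by line. Your route --- conjugating the indicator of the nuclear ball to get the dual, invoking Slater for strong duality, and then reading conditions 1--4 off the Fermat rule $0 \in -\Sigmahat + \lambda\,\partial\normMixeds{P} + N_C(P)$ together with the dual-norm description of $\partial\normMixeds{P}$ and the normal cone $N_C(P) = \{Z : \ipHS{Z}{P} = \norm{Z}\}$ --- is more systematic than what the paper does for the analogous phase-retrieval statement (\Cref{lem:optimal}), where necessity is argued by exhibiting explicit descent directions. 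Your identification of condition 4 as the zero-duality-gap equation (hence redundant given 2 and 3) matches the paper's parenthetical ``(redundant),'' and your observation that the converse needs only conditions 1 and 4 plus feasibility is a nice bonus. The one substantive point you raise deserves emphasis: as literally stated, the ``only if'' direction of the lemma is false in degenerate cases (if $\lambda \geq \sup_{u,v} \ip{\Sigmahat u}{v}/\theta_s(u,v)$ then $P = 0$ is optimal and $\norm{P}_* = 0$), so the lemma implicitly assumes the optimal value is strictly negative; your homogeneity argument showing that negativity of the optimum forces $\norm{P}_* = 1$ is exactly the right repair, and making that hypothesis explicit (as you propose) would improve the statement.
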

	In the PCA case, the semidefinite version of the problem is somewhat simpler due to the fact that the nuclear norm becomes a trace.
	If we solve
	\[
		\Phat = \argmin_{P \succeq 0}~ - \ipHS{\Sigmahat}{P} + \lambda \Theta_s(P)~\text{s.t.}~\tr(P) \leq 1,
	\]
	we get similar theoretical error guarantees as \Cref{thm:pca}.
	Furthermore, $P = \sum_k u_k \otimes u_k$ solves this optimization program and $\{u_k\}$ is an optimal factorization with respect to $\Theta_s$ if and only if $P$ is feasible and,
	for all $u \in \R^p$.
	\[
		\ip{\Sigmahat u}{u} + \parens*{ \lambda \sum_k \theta_s(u_k, u_k) - \ipHS{\Sigmahat}{P}} \norm{u}_2^2 \leq \theta_s(u, u).
	\]
	
	\subsection{A first factored algorithm, a computational snag, and a heuristic}
	\label{sec:practical_alg}
	The results of the previous section give a simple abstract recipe for finding a global optimum of \eqref{eq:pr_opt_main}:
	\begin{enumerate}
		\item We optimize \eqref{eq:pr_opt_factored} over a fixed number $r$ of rank-1 factors (i.e., vectors $u_1, \dots, u_r, v_1, \dots, v_r$) until we reach a first-order stationary point (by satisfying condition (a) in \Cref{cor:opt_factored}).
		Note that whenever condition (a) is not satisfied, it is easy to find a descent direction,
		since we can simply rescale the vectors $u_k, v_k$ in a similar manner to the second part of the proof of \Cref{lem:optimal}.
		\item At a first-order stationary point, if condition (b) in \Cref{cor:opt_factored} holds, we have reached the global minimum.
		Otherwise, as in the first part of the proof of \Cref{lem:optimal}, there exists $\utl, \vtl \in \R^p$ such that $\frac{1}{n} \sum_{i=1}^n (y_i - \ipHS{X_i}{B}) \ip{X_i \utl}{\vtl} > \lambda \theta_s(\utl, \vtl)$.
		We set $(u_{r+1}, v_{r+1}) = (\epsilon \utl, \epsilon \vtl)$ for $\epsilon > 0$ small enough to decrease the objective function and go back to step 1.
	\end{enumerate}
	The algorithm is guaranteed to terminate with a finite $r$ by \cite[Theorem 2]{Haeffele2020}.

	The most difficult part to implement is step 2.
	Checking condition (b) requires maximizing a bilinear form on vectors $u, v$ under a bound on $\theta_s(u, v)$.
	If we could maximize this for general bilinear forms, we could also solve sparse PCA (see \Cref{sec:optimality}), so we suspect it is not possible.
	However, this does not preclude positive results that exploit the particular structure of the phase retrieval problem.

	To implement a practical algorithm,
	we take a very simple shortcut:
	instead of checking condition (b) over \emph{all} vectors $u, v \in \R^p$,
	we check it over \emph{1-sparse} vectors.
	We simply calculate whether any element of $\frac{1}{n} \sum_{i=1}^n (y_i - \ipHS{X_i}{B}) X_i$
	is greater than $(1 + 1/\sqrt{s})^2\lambda$.
	Although we have not yet found a robust theoretical justification,
	we will see in the next section that this heuristic works reasonably well in practice.
	We summarize our high-level practical algorithm in \Cref{alg:spr_hilevel}.

	\begin{algorithm}
		\caption{High-level sparse phase retrieval algorithm}
		\label{alg:spr_hilevel}
		\begin{algorithmic}[1]
			\State $r \gets 1$
			\State Initialize $u_1, v_1$ (e.g., some spectral algorithm)
			\While{not Converged}
				\State Optimize \eqref{eq:pr_opt_factored} over $\{u_1, \dots u_r\}, \{ v_1, \dots, v_r \}$ with first-order method until condition (a) in \Cref{cor:opt_factored} is satisfied
				\State $Z \gets \frac{1}{n} \sum_{i=1}^n (y_i - \ipHS{X_i}{B}) X_i$, where $B = \sum_{k=1}^r u_k \otimes v_k$
				\If{$Z_{ij} > (1 + 1/\sqrt{s})^2\lambda$ for any $i,j \in \{1, \dots, p\}$}
					\State $r \gets r + 1$
					\State $u_{r+1} \gets \epsilon e_j, v_{r+1} \gets \epsilon e_i$, where $\epsilon > 0$ is sufficiently small to decrease objective function.
				\Else
					\State Converged $\gets$ true
				\EndIf
			\EndWhile
			\State \Return $\{u_1, \dots, u_r\}, \{ v_1, \dots, v_r \}$
		\end{algorithmic}
	\end{algorithm}
	
	\begin{figure}
		\centering
		\begin{subfigure}[t]{0.45\textwidth}
			\centering
			\includegraphics[width=0.95\textwidth]{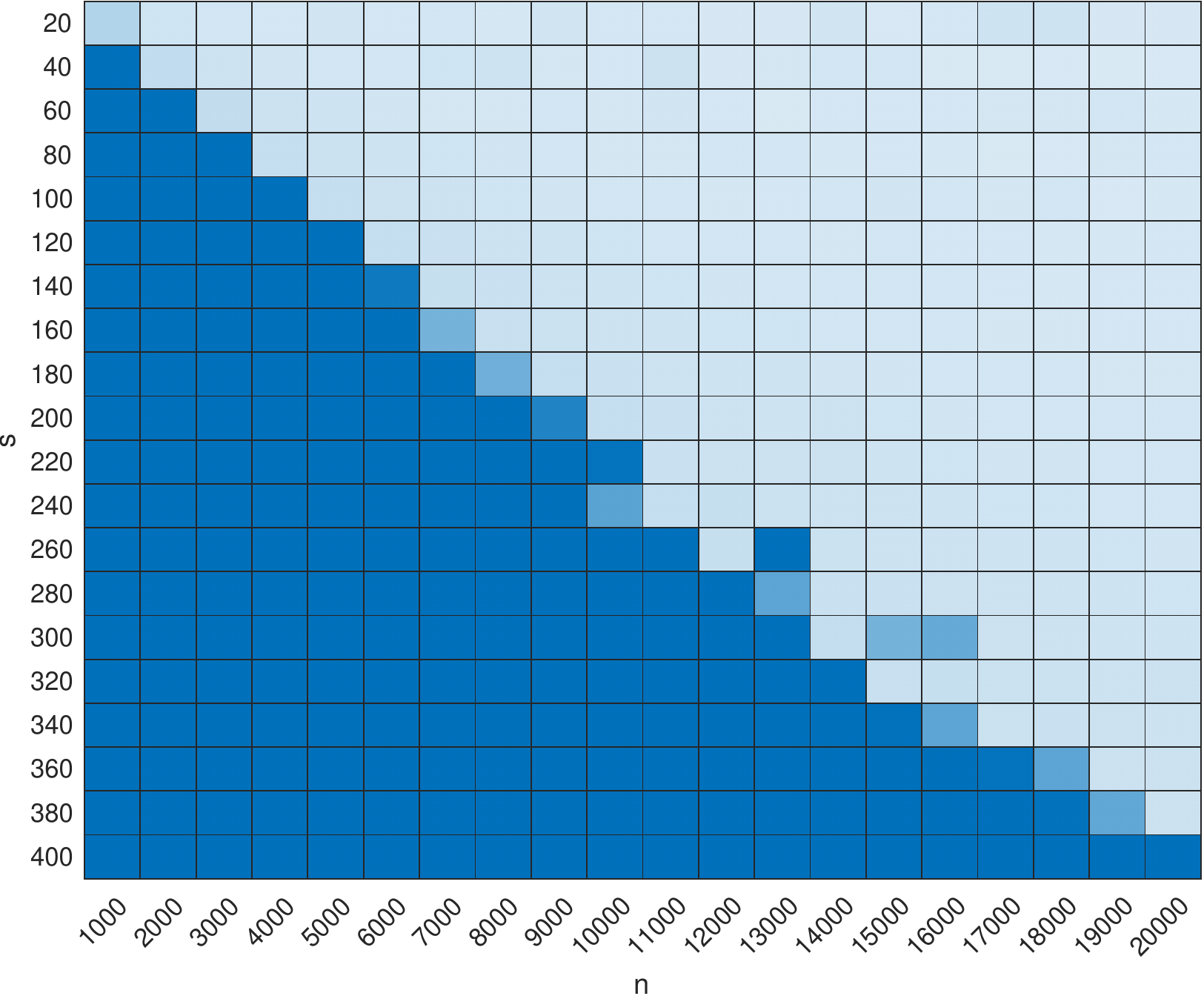}
			\subcaption{Our algorithm}
		\end{subfigure}
		\begin{subfigure}[t]{0.45\textwidth}
			\centering
			\includegraphics[width=0.95\textwidth]{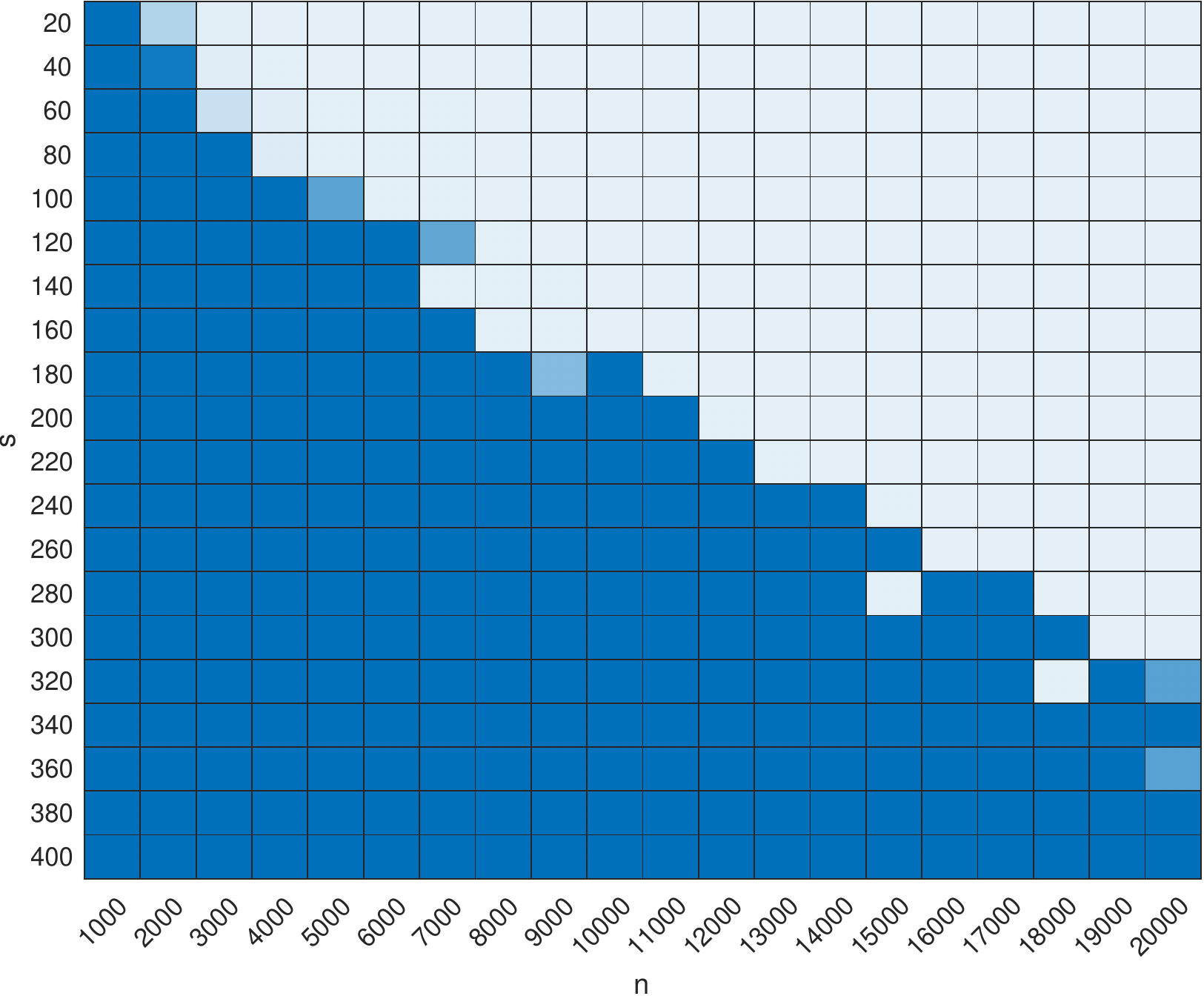}
			\subcaption{SWF \cite{Yuan2019}}
		\end{subfigure}\\[1em]
		\begin{subfigure}[t]{0.45\textwidth}
			\centering
			\includegraphics[width=0.95\textwidth]{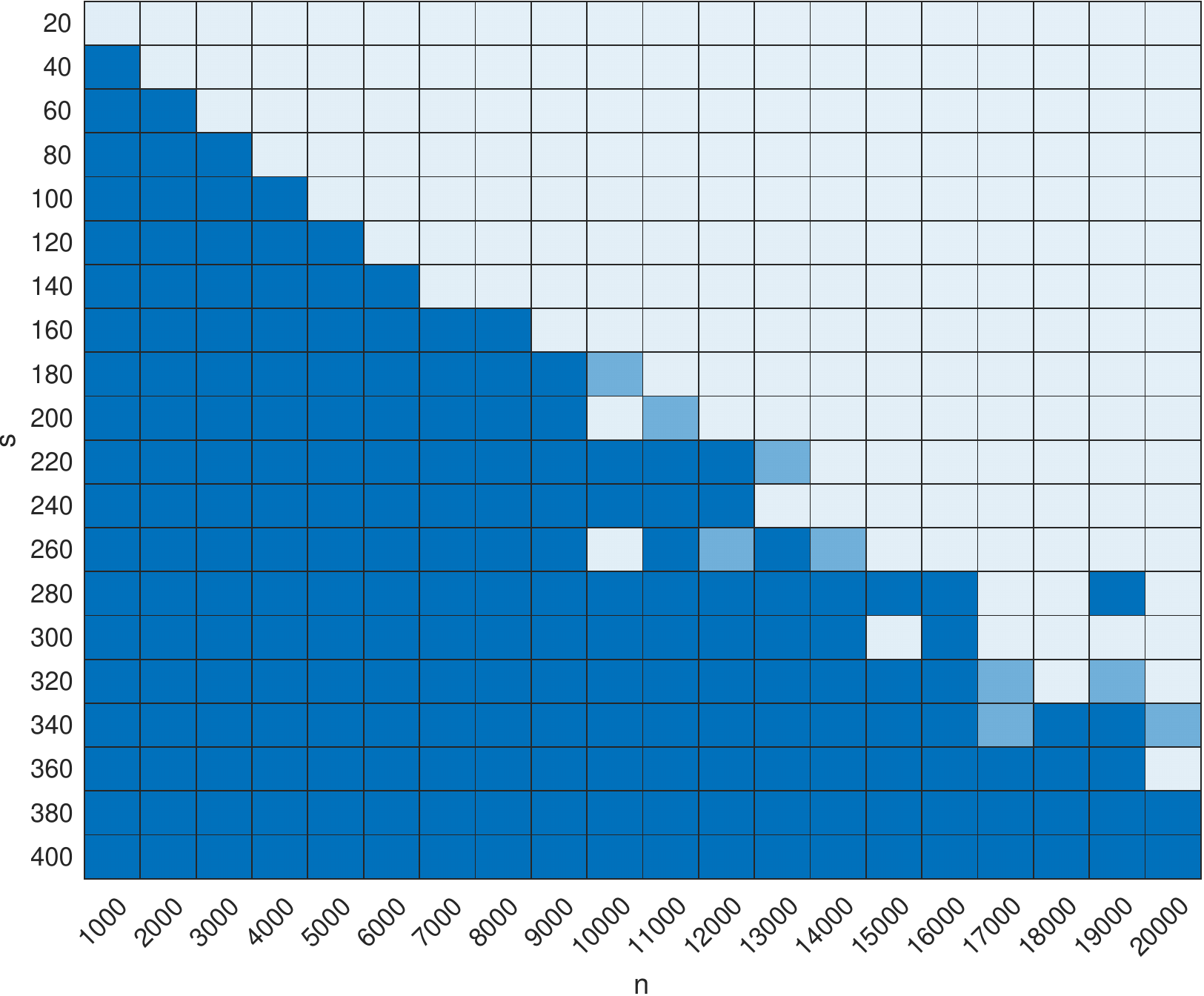}
			\subcaption{GAMP \cite{Schniter2015}}
		\end{subfigure}
		\begin{subfigure}[t]{0.45\textwidth}
			\centering
			\includegraphics[width=0.95\textwidth]{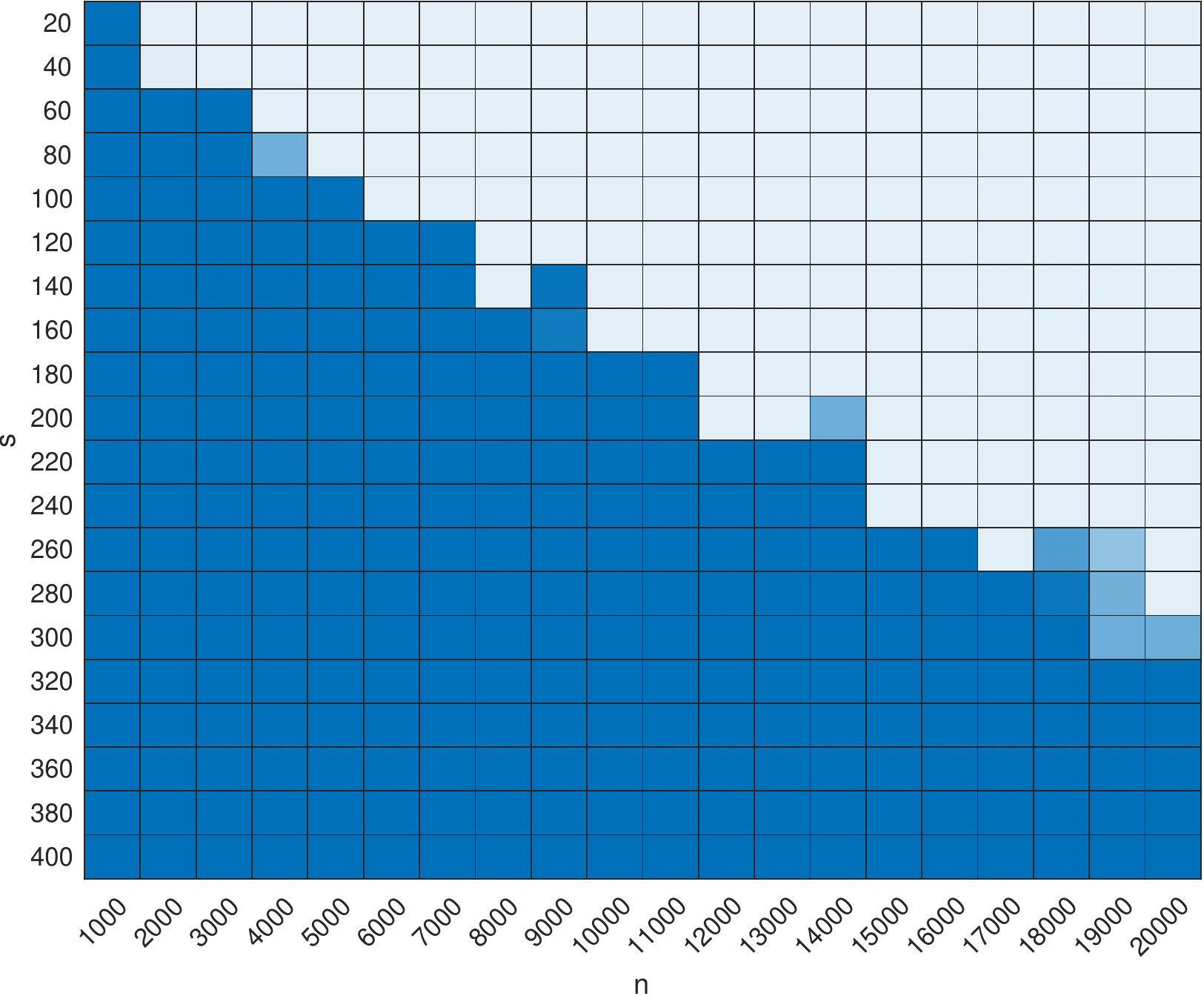}
			\subcaption{SPARTA \cite{Wang2018}}
		\end{subfigure}\\[1em]
		\begin{subfigure}[t]{0.45\textwidth}
			\centering
			\includegraphics[width=0.95\textwidth]{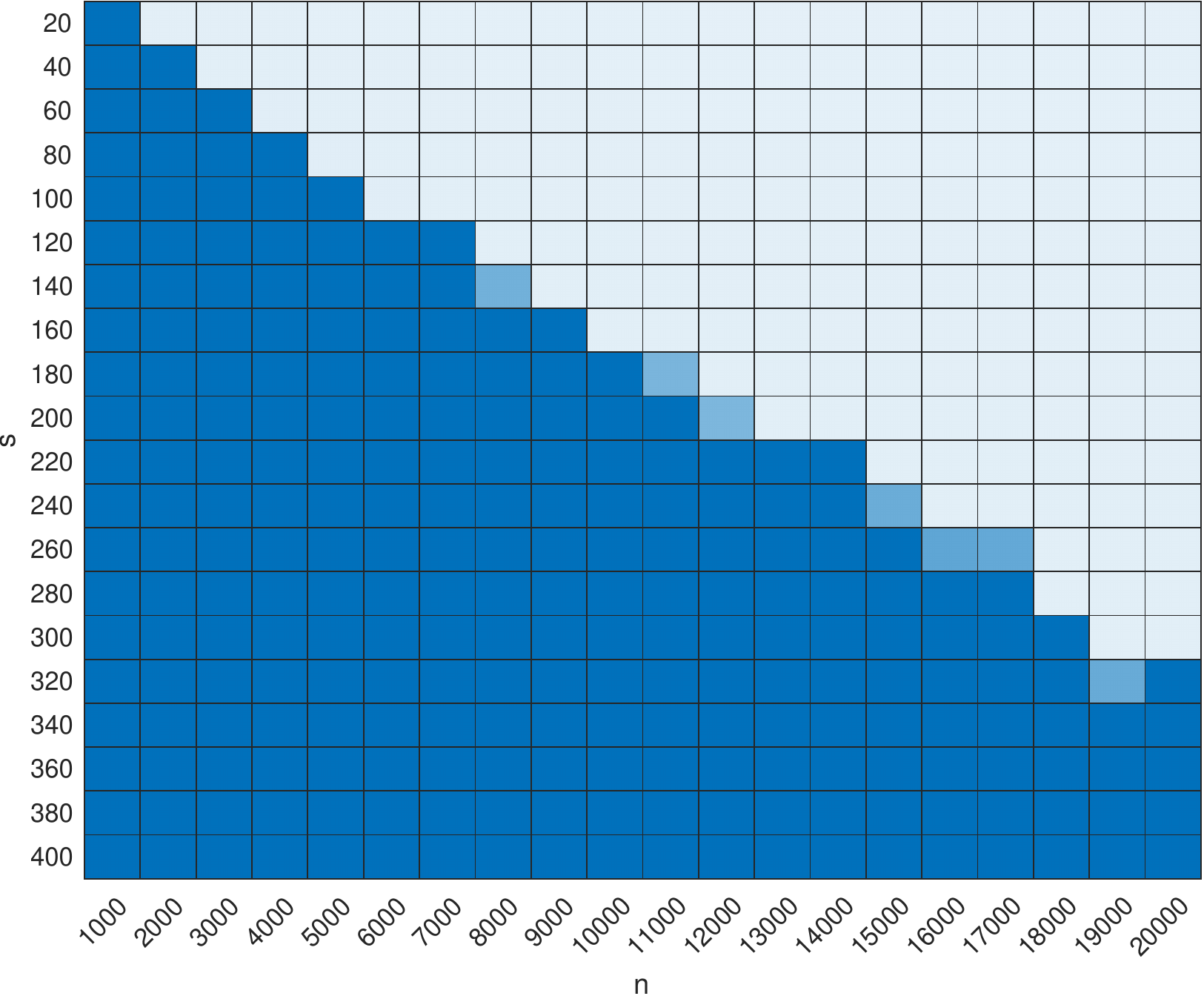}
			\subcaption{CoPRAM \cite{Jagatap2019}}
		\end{subfigure}
		\caption{Phase transition plots. Colors represent 80\% quantile error over 20 trials (darker colors correspond to higher error). We used $p = 20{,}000$, $\norm{\beta^*}_2 = 1$, and $\sigma = 0.05$. All algorithms were run on the same data.}
		\label{fig:phase_tr_sims}
	\end{figure}
	
	\begin{figure}
		\centering
		\begin{subfigure}[t]{0.49\textwidth}
			\centering
			\includegraphics[width=0.98\textwidth]{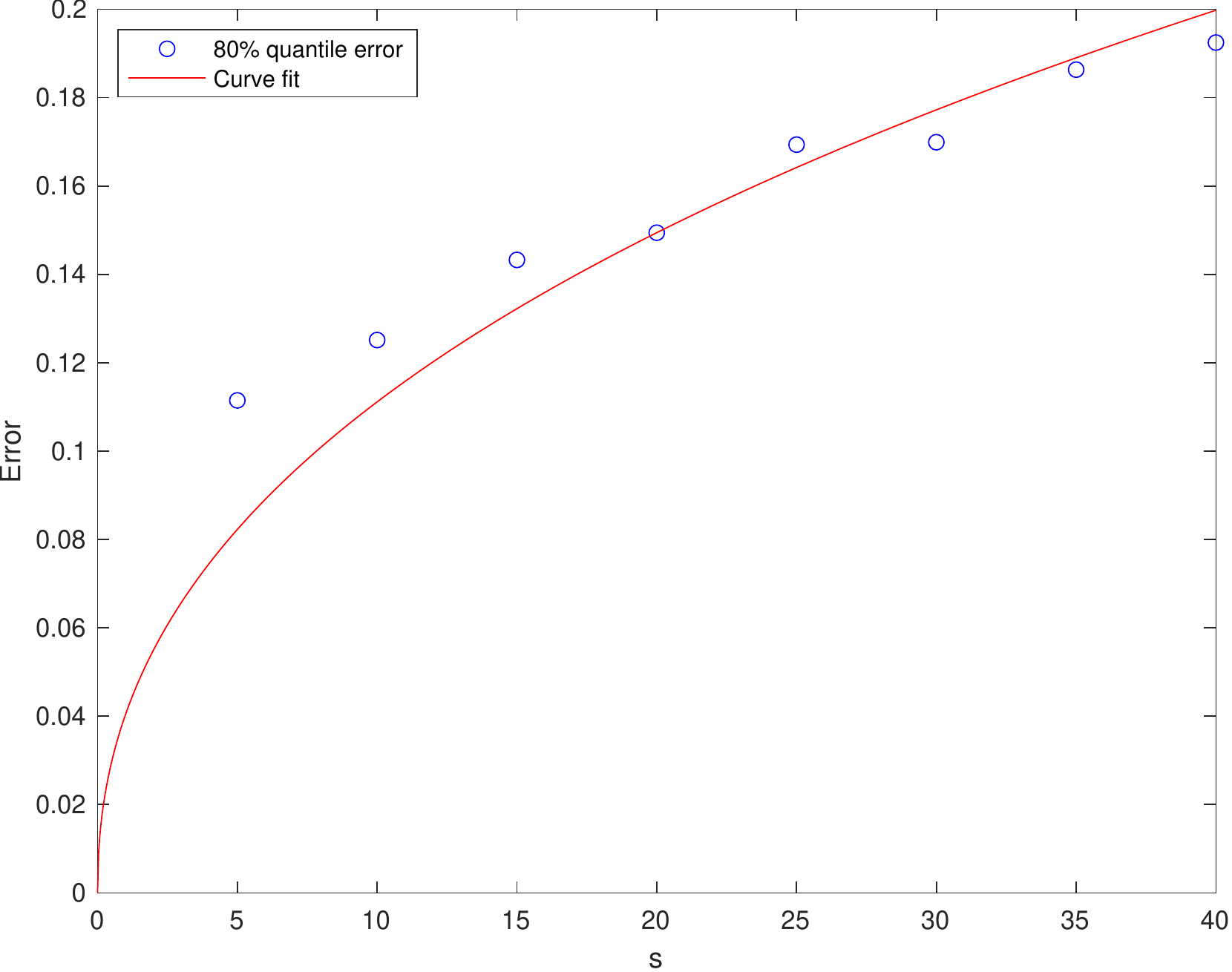}
			\subcaption{Gaussian noise ($\norm{\beta^*}_2 = 1$, $\sigma = 0.2$)}
		\end{subfigure}
		\begin{subfigure}[t]{0.49\textwidth}
			\centering
			\includegraphics[width=0.98\textwidth]{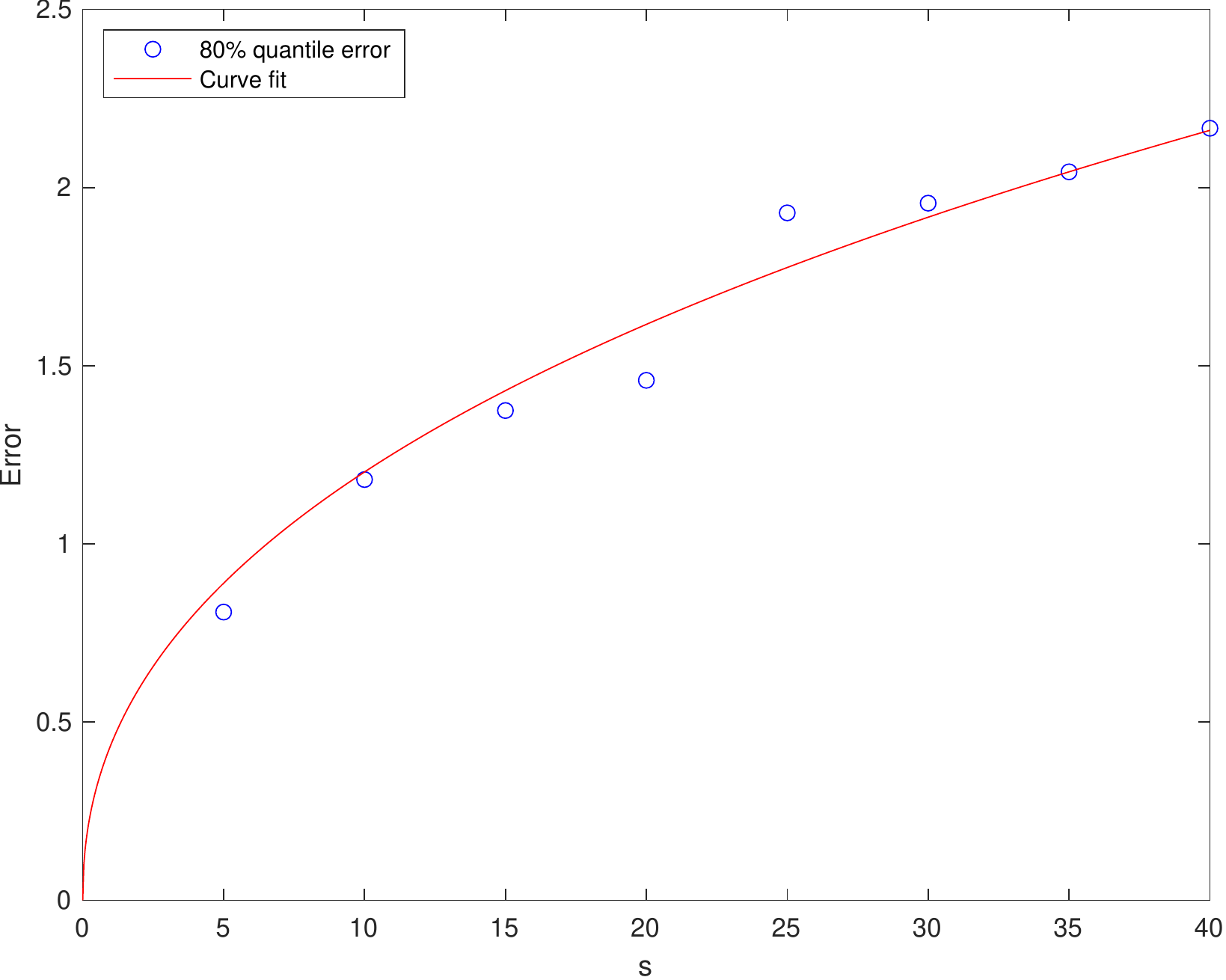}
			\subcaption{Poisson noise ($\norm{\beta^*}_2 = 10$)}
		\end{subfigure}
		\caption{Plot of $\norm{\betahat - \beta^*}_2$ vs.\ $s$ (80\% quantile over 10 trials). All simulations use $p = 8{,}000$ and $n = 4{,}000$.
			Blue circles are actual data; the red curves are of the form $c \sqrt{s \log \frac{ep}{s}}$, where the scaling factor $c$ is chosen to give minimum mean absolute deviation.}
		\label{fig:s_sweep_sim}
	\end{figure}
	
	\subsection{Simulation results}
	\label{sec:sims}
	We implemented \Cref{alg:spr_hilevel} in MATLAB and ran a variety of simulations to illustrate its performance with respect to both sample complexity and noise performance.
	The interested reader can view our code\footnote{\url{https://github.com/admcrae/spr2021}} to see more details, but some of the more salient features are the following:
	\begin{itemize}
		\item Line 5 of \Cref{alg:spr_hilevel} is implemented with alternating minimization over $U = [u_1 \cdots u_r] \in \R^{p \times r}$ and $V = [v_1 \cdots v_r] \in \R^{p \times r}$.
		\item After each alternating minimization step, we ``rebalance'' $U$ and $V$ (i.e., rescale each $u_k, v_k$ to force $\theta_s(u_k, u_k) = \theta_s(u_k, v_k) = \theta_s(v_k, v_k)$).
		\item Each minimization problem over $U$ or $V$ is convex, and we solve it with an accelerated proximal gradient descent algorithm. % (switching to a Newton update once the sparsity pattern is known).
		\item The proximal step requires solving a convex problem of the form
		\[
			\argmin_{y \in \R^p}~\ip{x}{y} + \frac{1}{2} \norm{y}_2^2 + a \norm{y}_2 + b \norm{y}_1
		\]
		for arbitrary $x \in \R^p$ and $a, b > 0$.
		This can be solved in closed form by soft-thresholding $x$ with threshold $b$ and then rescaling.
%		The solution is a soft-thresholding of $x$ with a data-dependent threshold; the threshold can be found by an efficient iterative algorithm (e.g., a Newton algorithm).
%		This problem is closely related to calculating the \emph{dual norm} to $\norm{y} \coloneqq \sqrt{\norm{y}_2^2 + \frac{1}{s} \norm{y}_1^2}$ for arbitrary $s > 0$,
%		i.e., $\norm{x}^* = \max_{\norm{y} \leq 1} \ip{x}{y}$.
%		This may be an interesting object for further theoretical study.
	\end{itemize}
	
	All of our simulations used i.i.d.\ Gaussian measurement vectors $x \sim \normaldist(0, I_p)$.
	\begin{enumerate}
		\item \Cref{fig:phase_tr_sims} shows phase transition diagrams of performance versus sample size $n$ and sparsity $s$ for our algorithm and a variety of alternatives. Note that qualitatively, all these algorithms have similar performance in terms of sample complexity.
		Interestingly, all of them appear only to require (within a log factor) a number of samples \emph{linear} in the sparsity $s$.
		This demonstrates a gap between the empirical performance of all these algorithms and the best theoretical guarantees that have been proved so far.
		
		\item \Cref{fig:s_sweep_sim} shows plots of the error versus sparsity $s$ for both Gaussian noise and Poisson noise.
		Note that in both cases, the error roughly follows the predicted $\sqrt{s \log(p/s)}$ scaling.
		
	\end{enumerate}
	
	\section{Conclusion}
	We have shown that estimators for sparse phase retrieval and sparse PCA obtained by solving a convex program (\eqref{eq:pr_opt_main} for sparse phase retrieval and \eqref{eq:pca_opt} for sparse PCA) with the abstract mixed atomic norm \eqref{eq:mixednorm_def} as a regularizer satisfy optimal statistical guarantees in terms of sample complexity and error.
	For sparse phase retrieval, we have derived a practical heuristic algorithm whose performance matches that of existing state-of-the-art algorithms.
	
	Our work suggests new methods for analyzing these problems (and others with similar sparse factored structure, such as sparse blind deconvolution).
	It also suggests interesting new research directions in sparse recovery and in optimization.
	For example, it would be very useful to study \emph{why} our heuristic approach appears to work well for sparse phase retrieval as well as whether it is possible to do even better. A related problem is to prove that sparse phase retrieval has linear sample complexity with practical algorithms (or that it doesn't, along with why current empirical results seem to suggest otherwise).
	Similarly, the atomic matrix norm (along with other similar norms) invites further analysis, particularly in how well we can optimize it (where this may depend on the structure of the problem in which it is used).
	The interplay between statistical guarantees and computational complexity theory (e.g., in sparse PCA) may be very interesting here.
	\clearpage
	\appendix
	\section{Detailed analysis of mixed norm}
\label{app:mixednorm}
In this section, we explore several important properties of the mixed norm $\normMixeds{\cdot}$.

First, we show that matrices with small mixed norm can be written as a convex combination of sparse rank-1 matrices.
\begin{lemma}
	\label{lem:norm_atomic_equiv}
	For any matrix $A$, we can write $A = \sum a_i u_i \otimes v_i$,
	where each $u_i$ and $v_i$ has unit $\ell_2$ norm and is $s$-sparse, and $\sum \abs{a_i} \leq \normMixeds{A}$.
	
	Consequently, for any matrix $Z$,
	\[
		\sup_{\normMixeds{A} \leq 1} \ipHS{Z}{A}
		\leq \sup_{\substack {\norm{u}_2, \norm{v}_2 \leq 1 \\ \norm{u}_0, \norm{v}_0 \leq s}} \ip{Z u}{v}.
	\]
\end{lemma}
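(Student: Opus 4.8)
The plan is to reduce everything to a one-vector statement: \emph{any $u \in \R^p$ can be written as $u = \sum_\ell b_\ell w_\ell$ where each $w_\ell$ is $s$-sparse with $\norm{w_\ell}_2 = 1$ and $\sum_\ell \abs{b_\ell} \leq \norm{u}_2 + \frac{1}{\sqrt s}\norm{u}_1$.} To prove this I would sort the coordinates of $u$ by decreasing magnitude and split the support into consecutive blocks $S_1, S_2, \dots$ of size $s$ (the last block possibly smaller), setting $b_\ell = \norm{u|_{S_\ell}}_2$ and $w_\ell = u|_{S_\ell}/\norm{u|_{S_\ell}}_2$ for the nonzero blocks, so that $u = \sum_\ell b_\ell w_\ell$ with each $w_\ell$ $s$-sparse and unit-norm. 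The coefficient bound is the standard ``shelling'' estimate: for $\ell \geq 2$ every entry of $u|_{S_\ell}$ has magnitude at most the average magnitude of the entries of $u|_{S_{\ell-1}}$, so $\norm{u|_{S_\ell}}_2 \leq \sqrt s\, \norm{u|_{S_\ell}}_\infty \leq \tfrac{1}{\sqrt s}\norm{u|_{S_{\ell-1}}}_1$; summing over $\ell \geq 2$ and adding $\norm{u|_{S_1}}_2 \leq \norm{u}_2$ gives $\sum_\ell \abs{b_\ell} \leq \norm{u}_2 + \tfrac{1}{\sqrt s}\norm{u}_1$.

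Next I would lift this to rank-one matrices and then to general $A$. Given $u \otimes v$, apply the vector decomposition to each factor, $u = \sum_j b_j w_j$ and $v = \sum_k c_k z_k$, and expand $u \otimes v = \sum_{j,k} b_j c_k\, w_j \otimes z_k$; each $w_j \otimes z_k$ has $s$-sparse unit-$\ell_2$ factors, and $\sum_{j,k} \abs{b_j c_k} = \bigl(\sum_j \abs{b_j}\bigr)\bigl(\sum_k \abs{c_k}\bigr) \leq \theta_s(u,v)$ by the vector bound and the definition of $\theta_s$. Now for arbitrary $A$ and any $\varepsilon > 0$, choose a factorization $A = \sum_m u_m \otimes v_m$ with $\sum_m \theta_s(u_m, v_m) \leq \normMixeds{A} + \varepsilon$, apply the rank-one decomposition to each summand, and concatenate; this yields $A = \sum_i a_i\, u_i \otimes v_i$ with all $u_i, v_i$ being $s$-sparse and unit-$\ell_2$, and $\sum_i \abs{a_i} \leq \normMixeds{A} + \varepsilon$. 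To remove the $\varepsilon$ I would note that the gauge $\gamma(A) := \inf\{\sum_i \abs{a_i} : A = \sum_i a_i\, u_i \otimes v_i,\ u_i, v_i\ s\text{-sparse},\ \norm{u_i}_2 = \norm{v_i}_2 = 1\}$ is an atomic norm whose atom set is compact (a finite union, over pairs of supports, of continuous images of a product of Euclidean spheres), so its defining infimum is attained; since $\gamma(A) \leq \normMixeds{A} + \varepsilon$ for every $\varepsilon$, in fact $\gamma(A) \leq \normMixeds{A}$, and the minimizing decomposition is the asserted one. (If one is content with $\sum_i\abs{a_i}\le\normMixeds{A}+\varepsilon$, this last remark can be skipped, as that already suffices for the consequence.)

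The ``consequently'' part is then immediate: if $\normMixeds{A} \leq 1$, write $A = \sum_i a_i\, u_i \otimes v_i$ as above with $\sum_i \abs{a_i} \leq \normMixeds{A} \leq 1$, so that $\ipHS{Z}{A} = \sum_i a_i \ipHS{Z}{u_i \otimes v_i} = \sum_i a_i \ip{Z u_i}{v_i} \leq \bigl(\sum_i \abs{a_i}\bigr) \sup\{\ip{Zu}{v} : \norm{u}_2, \norm{v}_2 \leq 1,\ \norm{u}_0, \norm{v}_0 \leq s\}$, and taking the supremum over such $A$ finishes the proof. The crux of the argument — and the only place any real care is needed — is the shelling step: the blocks must have size exactly $s$ so that the per-block $\ell_2$ norms are controlled by $\tfrac{1}{\sqrt s}$ times the previous block's $\ell_1$ norm, which is precisely what matches the $\tfrac{1}{\sqrt s}\norm{\cdot}_1$ term in $\theta_s$. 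Everything else is bookkeeping.
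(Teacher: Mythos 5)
Your proposal is correct and follows essentially the same route as the paper: reduce to rank-one matrices via a (near-)optimal factorization, decompose each factor into $s$-sparse unit-norm pieces with total coefficient mass at most $\norm{\cdot}_2 + \tfrac{1}{\sqrt s}\norm{\cdot}_1$ (the paper cites this shelling bound from Vershynin, which you prove explicitly), and multiply out. Your extra care with the $\varepsilon$ in the infimum via compactness of the atom set is a small refinement the paper glosses over, but not a different argument.
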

\begin{proof}
	The consequence follows from the first statement immediately by the fact that any unit-atomic-norm $A$ is in the convex hull of rank-1 $s$-sparse atoms. We now prove the first statement of the Lemma.
	
	Because $\normMixeds{\cdot}$ is defined as an atomic norm over rank-1 atoms,
	it suffices to prove the result for rank-1 $A$.
	Therefore, we will show that any rank-1 matrix $x \otimes y$ can be written as $x \otimes y = \sum u_i \otimes v_i$,
	where each $u_i$ and $v_i$ is $s$-sparse,
	and $\sum \norm{u_i}_2 \norm{v_i}_2 \leq \theta_s(x, y)$.
	
	Indeed, a standard result from sparsity theory (see, e.g., Exercise 10.3.7 in \cite{Vershynin2018}) says that any vector $z$ can be written as $z = \sum z_i$,
	where each $z_i$ is $s$-sparse, and $\sum \norm{z_i}_2 \leq \norm{z}_2 + \frac{1}{\sqrt{s}} \norm{z}_1$.
	Applying this to both $x$ and $y$,
	we have
	\[
	x \otimes y = \parens*{ \sum_i x_i } \parens*{ \sum_j y_j}
	= \sum_{i,j} x_i \otimes y_j,
	\]
	where each $x_i$ and $y_j$ is $s$-sparse, and
	\[
	\sum_{i,j} \norm{x_i}_2 \norm{y_j}_2
	= \parens*{ \sum_i \norm{x_i}_2 } \parens*{ \sum_j \norm{y_j}_2 }
	\leq \parens*{\norm{x}_2 + \frac{\norm{x}_1}{\sqrt{s}}} \parens*{\norm{y}_2 + \frac{\norm{y}_1}{\sqrt{s}}}
	=\theta_s(x, y).
	\]
\end{proof}

To prove \Cref{lem:subgrad_ineq},
we need to find a suitable subgradient of $\normMixeds{\cdot}$ at the point $B = \beta \otimes \beta$.
Let $I \subset \{1, \dots, p\}$ denote the indices for which the entries of $\beta$ are nonzero.
With some abuse of notation, we also write $I$ as the subspace of $\R^{p \times p}$ whose matrices are zero except at entries $(i,j) \in I \times I$.
We also denote $T = \{ x \otimes \beta + \beta \otimes y : x, y \in \R^p \}$.
We will denote the orthogonal projections onto these subspaces and various orthogonal complements and intersections by $\projI$, $\projT$, $\projTIp$, etc.
We will also on occasion denote the orthogonal projection onto $\spn\{\beta\} \subset \R^p$ or its orthogonal complement (in $I$) by $\projb$, $\projbp$, $\projbpI$, etc.

According to \cite[Proposition 1]{Haeffele2020}, a matrix $W \in \partial\normMixeds{B}$ if the following two properties hold:
\begin{enumerate}
	\item $\ip{W \beta}{\beta} = \theta_s(\beta, \beta)$, and
	\item $\ip{W u}{v} \leq \theta_s(u,v)$ for all $u, v \in \R^p$.
\end{enumerate}
It is easy to check that the matrix $W_\beta \coloneqq w_\beta \otimes w_\beta$,
where $w_\beta \coloneqq \frac{\beta}{\norm{\beta}_2} + \frac{1}{\sqrt{s}} \sign \beta$, is a subgradient.
However, as with the subgradients of the ordinary nuclear norm,
a much broader set of matrices satisfies these properties:
\begin{lemma}
	\label{lem:subgrads}
	Suppose $\beta$ is $s$-sparse, and let $B = \beta \otimes \beta$.
	Any matrix of the form $W = W_\beta + W^\perp \in \partial \normMixeds{B}$
	where $W^\perp$ can be any matrix in one of the following three families (or any convex combination thereof):
	\begin{enumerate}
		\item $W^\perp = \frac{1}{\sqrt{s}} (w_\beta \otimes \utl + \vtl \otimes w_\beta)$, where $\utl,\vtl \in I^\perp$ and $\norm{\utl}_\infty, \norm{\vtl}_\infty \leq 1$.
		\item $W^\perp \in T^\perp$ and $\norm{W} \leq 1$.
		\item $W^\perp = \projTpIp(\Wtl)$ for $\Wtl$ satisfying $\ip{\Wtl u}{v} \leq \frac{1}{5}\theta_s(u,v)$ for all $u,v \in \R^p$.
	\end{enumerate}
\end{lemma}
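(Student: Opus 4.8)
The plan is to verify, for each admissible $W$, the two defining conditions of a subgradient of $\normMixeds{\cdot}$ at $B=\beta\otimes\beta$ recalled just above from \cite[Proposition 1]{Haeffele2020}: (i) $\ip{W\beta}{\beta}=\theta_s(\beta,\beta)$, and (ii) $\ip{Wu}{v}\leq\theta_s(u,v)$ for all $u,v\in\R^p$. It helps to abbreviate $\phi(u):=\norm{u}_2+\tfrac{1}{\sqrt s}\norm{u}_1$, so $\theta_s(u,v)=\phi(u)\phi(v)$; applying Cauchy--Schwarz and H\"older to the two pieces $\tfrac{\beta}{\norm{\beta}_2}$ and $\tfrac{1}{\sqrt s}\sign\beta$ of $w_\beta$ gives $\abs{\ip{w_\beta}{u}}\leq\phi(u)$ (equality when $u\parallel\beta$), and since $w_\beta$ is supported on $I$ in fact $\abs{\ip{w_\beta}{u}}\leq\phi(\projI u)$; with $\ip{w_\beta}{\beta}=\phi(\beta)$ this re-proves that $W_\beta=w_\beta\otimes w_\beta$ satisfies (i) and (ii). Because $\partial\normMixeds{B}$ is convex, it suffices to handle the three families of $W^\perp$ separately; the statement about convex combinations then follows since $W_\beta+\sum_j\lambda_jW_j^\perp=\sum_j\lambda_j(W_\beta+W_j^\perp)$ with $\sum_j\lambda_j=1$.

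Property (i) holds in all three cases because $\ip{W^\perp\beta}{\beta}=0$: for family~1 since $\utl,\vtl\in I^\perp$ while $\beta$ is supported on $I$, and for families~2 and~3 since $W^\perp\in T^\perp$ while $\beta\otimes\beta\in T$. Property (ii) for family~2 is immediate from $\norm{W}\leq1$: $\ip{Wu}{v}\leq\norm{u}_2\norm{v}_2\leq\phi(u)\phi(v)$. For family~1, expanding $W^\perp$ gives $\ip{W^\perp u}{v}=\tfrac{1}{\sqrt s}\bigl(\ip{\utl}{u}\ip{w_\beta}{v}+\ip{w_\beta}{u}\ip{\vtl}{v}\bigr)$; bounding $\abs{\ip{\utl}{u}}\leq\norm{\projIp u}_1$ (as $\utl\in I^\perp$, $\norm{\utl}_\infty\leq1$) and $\abs{\ip{w_\beta}{u}}\leq\phi(\projI u)$ yields $\ip{Wu}{v}\leq ab+a'b+ab'$ with $a=\phi(\projI u)$, $b=\phi(\projI v)$, $a'=\tfrac{1}{\sqrt s}\norm{\projIp u}_1$, $b'=\tfrac{1}{\sqrt s}\norm{\projIp v}_1$; since $\norm{u}_2\geq\norm{\projI u}_2$ and $\norm{u}_1=\norm{\projI u}_1+\norm{\projIp u}_1$ we get $\phi(u)\geq a+a'$ (likewise $\phi(v)\geq b+b'$), hence $\ip{Wu}{v}\leq(a+a')(b+b')\leq\theta_s(u,v)$.

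The substantive case is property (ii) for family~3. First I would check that $\projI$ and $\projT$ commute: a short computation from $\projT M=\projb M+M\projb-\projb M\projb$ and $\projI M=\Pi_I M\Pi_I$ (with $\projb$ the $\R^p$-projection onto $\spn\{\beta\}$ and $\Pi_I$ the coordinate projection onto $I$, using that $\beta$ is supported on $I$, so $\Pi_I\projb=\projb=\projb\Pi_I$). Then $\projTpIp=\projTp\projIp$, and $\projTp M=(\mathrm{Id}-\projb)M(\mathrm{Id}-\projb)$ together with $(\mathrm{Id}-\projb)\Pi_I=\projbpI$ give
\[
 \projTpIp(u\otimes v)=\projbpI u\otimes\projIp v+\projIp u\otimes\projbpI v+\projIp u\otimes\projIp v .
\]
Pairing with $\Wtl$ and using $\abs{\ip{\Wtl x}{y}}\leq\tfrac{1}{5}\theta_s(x,y)$ gives
\[
 \ip{W^\perp u}{v}\leq\tfrac{1}{5}\bigl(\phi(\projbpI u)\phi(\projIp v)+\phi(\projIp u)\phi(\projbpI v)+\phi(\projIp u)\phi(\projIp v)\bigr),
\]
while $\ip{W_\beta u}{v}\leq\phi(\projI u)\phi(\projI v)$. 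To finish, I would split $u=\projb u+\projbpI u+\projIp u$ into mutually orthogonal pieces (the last two disjointly supported), use additivity of $\norm{\cdot}_2^2$ and $\norm{\cdot}_1$ across the $I/I^\perp$ split — giving bounds such as $\phi(u)\geq\sqrt{\phi(\projI u)^2+\norm{\projIp u}_2^2}+\tfrac{1}{\sqrt s}\norm{\projIp u}_1$ and $\phi(\projbpI u)\lesssim\phi(\projI u)$ — and conclude that $\phi(\projI u)\phi(\projI v)+\tfrac{1}{5}(\cdots)\leq\phi(u)\phi(v)$. The key structural point is that every term of the $\tfrac{1}{5}$-sum carries a factor $\phi(\projIp u)$ or $\phi(\projIp v)$, hence vanishes exactly when $u,v$ are supported on $I$, which is precisely the regime in which $\phi(\projI u)\phi(\projI v)$ saturates $\phi(u)\phi(v)$; the constant $\tfrac{1}{5}$ supplies the slack needed to absorb what is lost in comparisons like $\phi(\projbpI u)\lesssim\phi(\projI u)$.

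I expect this final inequality to be the main obstacle. Because the part of $u$ ``seen'' by $W_\beta$ (namely $\projI u$) overlaps the part ``seen'' by $W^\perp$ (namely $(\mathrm{Id}-\projb)u=\projbpI u+\projIp u$) in the component $\projbpI u$, one cannot merely add squared $\phi$-values of orthogonal components; the estimate must be arranged so the overlap is charged against the $\tfrac{1}{5}$ budget, and making this bookkeeping close (and confirming that $\tfrac{1}{5}$ is the threshold the argument actually needs) is the delicate step. Verifying the commutation of $\projI$ and $\projT$ and the resulting explicit form of $\projTpIp(u\otimes v)$ are the remaining, routine, ingredients.
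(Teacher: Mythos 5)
Your overall strategy coincides with the paper's: check the two conditions $\ip{W\beta}{\beta}=\theta_s(\beta,\beta)$ and $\ip{Wu}{v}\le\theta_s(u,v)$ from \cite[Proposition~1]{Haeffele2020} case by case, and your decomposition $\projTpIp(u\otimes v)=\projbpI(u)\otimes\projIp(v)+\projIp(u)\otimes\projbpI(v)+\projIp(u)\otimes\projIp(v)$ for family~3 is exactly the one the paper uses. However, there are two genuine gaps. First, your family-2 argument is wrong as written: you claim $\ip{Wu}{v}\le\norm{u}_2\norm{v}_2$ ``immediately from $\norm{W}\le1$,'' but the full $W=W_\beta+W^\perp$ never has operator norm $\le1$ (already $\ip{W_\beta\,\beta}{\beta}=\theta_s(\beta,\beta)>\norm{\beta}_2^2$), so that reading of the hypothesis is vacuous; the condition is on $W^\perp$, and what is needed is $\ip{W^\perp u}{v}=\ip{W^\perp\projbp(u)}{\projbp(v)}\le\norm{\projbp(u)}_2\norm{\projbp(v)}_2$ together with a bound on $\ip{W_\beta u}{v}$ whose $\ell_2$ part involves $\norm{\projb(u)}_2$, so that the two $\ell_2$ contributions sit on \emph{complementary} projections and combine via $\norm{\projb(u)}_2\norm{\projb(v)}_2+\norm{\projbp(u)}_2\norm{\projbp(v)}_2\le\norm{u}_2\norm{v}_2$.

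Second, and decisively, the estimate you commit to, $\abs{\ip{w_\beta}{u}}\le\norm{\projI(u)}_2+\frac{1}{\sqrt s}\norm{\projI(u)}_1$, is too weak to close families~2 and~3 --- and you correctly sense this when you flag the final combination in case~3 as ``the main obstacle'' and leave it unproved. The ingredient you are missing is the sharper mixed bound $\abs{\ip{w_\beta}{u}}\le\norm{\projb(u)}_2+\frac{1}{\sqrt s}\norm{\projI(u)}_1$, which your own Cauchy--Schwarz step on the piece $\beta/\norm{\beta}_2$ already yields but which you then discard. With it, the $\ell_2$ parts of the $W_\beta$ and $W^\perp$ contributions live on the complementary pair $\projb,\projbp$, the $\ell_1$ parts live on the complementary pair $\projI,\projIp$, and the cross terms are handled by Cauchy--Schwarz (giving $\le\frac{1}{\sqrt s}\norm{u}_2\norm{v}_1$); this is precisely how the overlap in $\beta^\perp\cap I$ that worries you disappears. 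With $\norm{\projI(u)}_2$ in place of $\norm{\projb(u)}_2$, the combination $\norm{\projI(u)}_2\norm{\projI(v)}_2+\norm{\projbp(u)}_2\norm{\projbp(v)}_2\le\norm{u}_2\norm{v}_2$ is simply false (take $u=v$ supported on $I$ and orthogonal to $\beta$), so no bookkeeping against the $1/5$ budget can rescue the argument: the $1/5$ is only there to absorb the factor~$2$ losses from $\norm{\projbpI(v)}_2+\frac{1}{\sqrt s}\norm{\projbpI(v)}_1\le2\norm{\projbp(v)}_2$ in case~3, not the overlap of the projections.
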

\begin{proof}
	For each case, note that $\ip{W^\perp \beta}{\beta} = 0$,
	so we only need to show that $\ip{W u}{v} \leq \theta_s(u, v)$ for all $u, v \in \R^p$.
	
	We will use the following simple fact many times: for any vector $u \in \R^p$,
	\[
		\abs{\ip{w_\beta}{u}} \leq \norm{\projb (u)}_2 + \frac{1}{\sqrt{s}} \norm{\projI(u)}_1.
	\]
	We prove each case separately.
	
	\paragraph{Case 1:} Let $\utl,\vtl \in I^\perp$ with $\norm{\utl}_\infty, \norm{\vtl}_\infty \leq 1$.
	Let
	\[
		W = w_\beta \otimes w_\beta + \frac{1}{\sqrt{s}} \parens*{ w_\beta \otimes \utl + \vtl \otimes w_\beta }.
	\]
	Then, for any $u,v \in \R^p$,
	\begin{align*}
		\ip{W u}{v}
		&= \ip{w_\beta}{u} \ip{w_\beta}{v} + \frac{1}{\sqrt{s}} \parens*{ \ip{w_\beta}{v}\ip{\utl}{u} + \ip{\vtl}{v} \ip{w_\beta}{u} } \\
		&\leq \parens*{ \norm{\projb(u)}_2 + \frac{1}{\sqrt{s}} \norm{\projI(u)}_1 } \parens*{ \norm{\projb(v)}_2 + \frac{1}{\sqrt{s}} \norm{\projI(v)}_1 } \\
		&\qquad+ \parens*{ \norm{\projb(v)}_2 + \frac{1}{\sqrt{s}} \norm{\projI(v)}_1 } \frac{\norm{\projIp(u)}_1}{\sqrt{s}} \\
		&\qquad+ \parens*{ \norm{\projb(u)}_2 + \frac{1}{\sqrt{s}} \norm{\projI(u)}_1 } \frac{\norm{\projIp(v)}_1}{\sqrt{s}} \\
		&\leq \parens*{ \norm{\projb(u)}_2 + \frac{1}{\sqrt{s}} \norm{u}_1 } \parens*{ \norm{\projb(v)}_2 + \frac{1}{\sqrt{s}} \norm{v}_1 } \\
		&\leq \theta_s(u, v),
	\end{align*}
	where the penultimate inequality uses the fact that $\norm{z}_1 = \norm{\projI(z)}_1 + \norm{\projIp(z)}_1$ for any vector $z$.
	
	\paragraph{Case 2:} Let $W^\perp \in T^\perp$ such that $\norm{W} \leq 1$.
	Let $u, v \in \R^p$. Note that $\ip{W^\perp u}{v} \leq \norm{\projbp(u)}_2 \norm{\projbp(v)}_2$.
	Then
	\begin{align*}
		\ip{W u}{v}
		&= \ip{w_\beta}{u} \ip{w_\beta}{v} + \ip{W^\perp u}{v} \\
		&\leq \parens*{ \norm{\projb(u)}_2 + \frac{1}{\sqrt{s}} \norm{\projI(u)}_1 } \parens*{ \norm{\projb(v)}_2 + \frac{1}{\sqrt{s}} \norm{\projI(v)}_1 } + \norm{\projbp(u)}_2 \norm{\projbp(v)}_2 \\
		&\leq \theta_s(u, v),
	\end{align*}
	where the last inequality uses that fact that
	\[
		\norm{\projb(u)}_2 \norm{\projb(v)}_2 + \norm{\projbp(u)}_2 \norm{\projbp(v)}_2
		\leq \norm{u}_2 \norm{v}_2.
	\]
	
	\paragraph{Case 3:} Let $\Wtl \in \R^{p \times p}$ satisfy $\ip{\Wtl u}{v} \leq \frac{1}{5}\theta_s(u, v)$.
	Let $W = W_\beta + \projTpIp(\Wtl)$.
	Then, for $u,v \in \R^p$,
	\begin{align*}
		\ip{W u}{v}
		&= \ip{w_\beta}{u} \ip{w_\beta}{v} + \ipHS{\projTpIp(\Wtl)}{v \otimes u} \\
		&= \ip{w_\beta}{u} \ip{w_\beta}{v} + \ipHS{\Wtl}{\projTpIp(v \otimes u)} \\
		&= \ip{w_\beta}{u} \ip{w_\beta}{v} + \ip{\Wtl \projIp(u)}{\projbpI(v)} + \ip{\Wtl \projbpI(u)}{\projIp(v)} + \ip{\Wtl \projIp(u)}{\projIp(v)} \\
		&\leq \parens*{ \norm{\projb(u)}_2 + \frac{1}{\sqrt{s}} \norm{\projI(u)}_1 } \parens*{ \norm{\projb(v)}_2 + \frac{1}{\sqrt{s}} \norm{\projI(v)}_1 } \\
		&\qquad+ \frac{1}{5}\parens*{ \norm{\projIp(u)}_2 + \frac{1}{\sqrt{s}} \norm{\projIp(u)}_1 } \parens*{ \norm{\projbpI(v)}_2 + \frac{1}{\sqrt{s}} \norm{\projbpI(v)}_1 } \\
		&\qquad+ \frac{1}{5}\parens*{ \norm{\projbpI(u)}_2 + \frac{1}{\sqrt{s}} \norm{\projbpI(u)}_1 } \parens*{ \norm{\projIp(v)}_2 + \frac{1}{\sqrt{s}} \norm{\projIp(v)}_1 } \\
		&\qquad+ \frac{1}{5}\parens*{ \norm{\projIp(u)}_2 + \frac{1}{\sqrt{s}} \norm{\projIp(u)}_1 } \parens*{ \norm{\projIp(v)}_2 + \frac{1}{\sqrt{s}} \norm{\projIp(v)}_1 } \\
		&\leq \parens*{ \norm{\projb(u)}_2 + \frac{1}{\sqrt{s}} \norm{\projI(u)}_1 } \parens*{ \norm{\projb(v)}_2 + \frac{1}{\sqrt{s}} \norm{\projI(v)}_1 } \\
		&\qquad+ \frac{2}{5} \parens*{ \norm{\projbp(u)}_2 + \frac{1}{\sqrt{s}} \norm{\projIp(u)}_1 } \norm{\projbp(v)}_2 \\
		&\qquad+ \frac{2}{5} \norm{\projbp(u)}_2 \parens*{ \norm{\projbp(v)}_2 + \frac{1}{\sqrt{s}} \norm{\projIp(v)}_1 } \\
		&\qquad+ \frac{1}{5} \parens*{ \norm{\projbp(u)}_2 + \frac{1}{\sqrt{s}} \norm{\projIp(u)}_1 } \parens*{ \norm{\projbp(v)}_2 + \frac{1}{\sqrt{s}} \norm{\projIp(v)}_1 } \\
		&\leq \parens*{ \norm{\projb(u)}_2 + \frac{1}{\sqrt{s}} \norm{\projI(u)}_1 } \parens*{ \norm{\projb(v)}_2 + \frac{1}{\sqrt{s}} \norm{\projI(v)}_1 } \\
		&\qquad + \parens*{ \norm{\projbp(u)}_2 + \frac{1}{\sqrt{s}} \norm{\projIp(u)}_1 } \parens*{ \norm{\projbp(v)}_2 + \frac{1}{\sqrt{s}} \norm{\projIp(v)}_1 }.
	\end{align*}
	To bound this last expression,
	we consider the terms that we get from multiplying everything out.
	Note again that
	\[
		\norm{\projb(u)}_2 \norm{\projb(v)}_2 + \norm{\projbp(u)}_2 \norm{\projbp(v)}_2 \leq \norm{u}_2 \norm{v}_2,
	\]
	and also 
	\[
		\norm{\projI(u))}_1 \norm{\projI(v)}_1 + \norm{\projIp(u)}_1 \norm{\projIp(v)}_1 \leq \norm{u}_1 \norm{v}_1.
	\]
	For the cross-terms, note that
	\begin{align*}
		\norm{\projb(u)}_2 \norm{\projI(v)}_1 + \norm{\projbp(u)}_2 \norm{\projIp(v)}_1
		&\leq \min_{c > 0} c \frac{ \norm{\projb(u)}_2^2 + \norm{\projbp(u)}_2^2 }{2} + \frac{1}{c} \frac{ \norm{\projI(v)}_1^2 + \norm{\projIp(v)}_1^2}{2 s} \\
		&\leq \min_{c > 0} \parens*{ c \frac{\norm{u}_2^2}{2} + \frac{1}{c} \frac{\norm{v}_1^2}{2s}} \\
		&= \frac{1}{\sqrt{s}} \norm{u}_2 \norm{v}_1.
	\end{align*}
	The similar inequality holds for $u$ and $v$ reversed.
	Therefore,
	\[
		\ip{W u}{v}
		\leq \parens*{ \norm{u}_2 + \frac{1}{\sqrt{s}} \norm{u}_1 } \parens*{ \norm{v}_2 + \frac{1}{\sqrt{s}} \norm{v}_1 }
		= \theta_s(u, v).
	\]
\end{proof}

With this, we can prove \Cref{lem:subgrad_ineq}.
\begin{proof}[Proof of \Cref{lem:subgrad_ineq}.]
	Let $A \in \R^{p \times p}$.
	We choose a subgradient $W \in \partial \normMixeds{B}$ as follows:
	Let
	\[
		W = W_\beta + \frac{1}{10}\parens*{ W_1^\perp + 4 W_2^\perp + 5 W_3^\perp },
	\]
	where we choose $W_i^\perp$, $i = 1,2,3$, as follows:
	\begin{enumerate}
		\item If $\projTIp(A) = \beta \otimes u + v \otimes \beta$ where $u,v \in I^\perp$,
		choose
		\[
			W_1^\perp = \frac{1}{\sqrt{s}} \parens*{ w_\beta \otimes \utl + \vtl \otimes w_\beta },
		\]
		where $\utl, \vtl \in I^\perp$, $\norm{\utl}_\infty, \norm{\vtl}_\infty \leq 1$ and $\ip{\utl}{u} = \norm{u}_1$, $\ip{\vtl}{v} = \norm{v}_1$.
		Then
		\begin{align*}
			\ip{W_1^\perp u}{v}
			&= \parens*{ \norm{\beta}_2 + \frac{\norm{\beta}_1}{\sqrt{s}} } \frac{\norm{u}_1 + \norm{v}_1}{\sqrt{s}} \\
			&\geq \theta_s(\beta, u) + \theta_s(\beta, v) - 2 \norm{\beta}_2 (\norm{u}_2 + \norm{v}_2) \\
			&\geq \normMixeds{\projTIp(A)} - 2\sqrt{2} \norm{\projTIp(A)}_F \\
			&\geq \normMixeds{\projTIp(A)} - 2\sqrt{2} \norm{A}_F.
		\end{align*}
	
		\item Choose $W_2^\perp \in T^\perp \cap I$ with $\norm{W_2^\perp} \leq 1$ such that $\ipHS{W_2^\perp}{A} = \norm{\projTpI(A)}_* \geq \frac{1}{4} \normMixeds{\projTpI(A)}$.
		This last norm inequality holds because every vector in $I$ is $s$-sparse.
		
		\item Choose $W_3^\perp$ according to \Cref{lem:subgrads} such that $\ipHS{W_3^\perp}{A} = \frac{1}{5} \normMixeds{\projTpIp(A)}$.
	\end{enumerate}
	Then, using the fact that $\norm{W_\beta}_F = \norm{w_\beta}_2^2 \leq 4$, we have
	\begin{align*}
		\ipHS{W}{A}
		&= \ipHS{W_\beta}{A} + \frac{1}{10}\ipHS{W_1^\perp}{A} + \frac{4}{10} \ipHS{W_2^\perp}{A} + \frac{5}{10}\ipHS{W_3^\perp}{A} \\
		&\geq -4 \norm{A}_F - \frac{1}{10} \normMixeds{\projTI(A)} + \frac{1}{10} \normMixeds{\projTI(A)}\\
		&\qquad+ \frac{1}{10} \parens*{ \normMixeds{\projTIp(A)} - 2\sqrt{2} \norm{A}_F } + \frac{4}{10} \cdot \frac{1}{4} \normMixeds{\projTpI(A)} + \frac{5}{10} \cdot \frac{1}{5} \normMixeds{\projTpIp(A)} \\
		&\geq \frac{1}{10} \normMixeds{A} - \parens*{4 + \frac{\sqrt{2}}{5}} \norm{A}_F - \frac{1}{10} \normMixeds{\projTI(A)} \\
		&\geq \frac{1}{10} \normMixeds{A} - 5 \norm{A}_F,
	\end{align*}
	where the last inequality uses the fact that $\normMixeds{\projTI(A)} \leq 4 \norm{\projTI(A)}_* \leq 4 \sqrt{2} \norm{A}_F$.
\end{proof}

\section{Empirical process and restricted lower isometry bounds}
\label{app:empirical_proofs}

\begin{proof}[Proof of \Cref{lem:emp_proc}]
	By \Cref{lem:norm_atomic_equiv}, it suffices to show
	\[
	\sup_{\substack{\norm{u}_2 = \norm{v}_2 = 1\\ \norm{u}_0, \norm{v}_0 \leq s}} \ip{Z u}{v}
	\lesssim \sigma \sqrt{\frac{s \log (e p/s)}{n}} + \frac{M}{n^{1-c}} \parens*{s \log \frac{ep}{s}}^{\eta + 1}
	\]
	where, again, $Z = \frac{1}{n} \sum_i G_i$.
	
	We first consider the random variable $\ip{Z u}{v}$ for fixed unit-norm $u$ and $v$.
	We have
	\begin{align*}
		\ip{Zu}{v}
		&= \frac{1}{n} \sum_{i=1}^n \ip{G_i u}{v}.
	\end{align*}
	This is the sum of independent copies of the zero-mean random variable $\ip{G u}{v}$.
	By assumption,
	\[
	\E \ip{G u}{v}^2 \leq \sigma^2
	\]
	and, for $\alpha \geq 3$,
	\[
	\norm{\ip{G u}{v}}_\alpha \leq M \alpha^{\eta+1}.
	\]
	Then, by \cite[Theorem 3.1]{Rio2017}, for any $\delta > 0$, with probability at least $1 - \delta$,
	\[
	\ip{Z u}{v} \lesssim \sigma \sqrt{\frac{\log \delta^{-1}}{n}} + \frac{M \alpha^{\eta + 1}}{n^{1 - 1/\alpha}} \delta^{-1/\alpha}.
	\]		
	We then use a covering argument similar to that in \cite{Baraniuk2008}.
	Let $J_1$ and $J_2$ be any two subspaces of $s$-sparse vectors in $\R^p$.
	The unit sphere $S_{J_i}$ in $J_i$ can be covered within a resolution of $1/4$ by at most $9^s$ points (\cite[Corollary 4.2.13]{Vershynin2018}, for example).
	Let $\scrN_{J_1}, \scrN_{J_2}$ be optimal $1/4$-covering sets.
	For each $x \in S_{J_i}$, let $n_i(x)$ be the closest point in $\scrN_{J_i}$.
	Then
	\begin{align*}
		\sup_{\substack{u \in S_{J_1} \\ v \in S_{J_2}}} \ip{Zu}{v}
		&= \sup_{\substack{u \in S_{J_1} \\ v \in S_{J_2}}} \ip{Z n_1(u)}{n_2(v)} + \ip{Z (u - n_1(u))}{v} + \ip{Z n_1(u)}{v - n_2(v)} \\
		&\leq \max_{\substack{u \in \scrN_{J_1} \\ v \in \scrN_{J_2}}} \ip{Z u}{v} + \frac{1}{2} \sup_{\substack{u \in S_{J_1} \\ v \in S_{J_2}}} \ip{Zu}{v},
	\end{align*}
	so
	\[
	\sup_{\substack{u \in S_{J_1} \\ v \in S_{J_2}}} \ip{Zu}{v} \leq 2 \max_{\substack{u \in \scrN_{J_1} \\ v \in \scrN_{J_2}}} \ip{Z u}{v}.
	\]
	Let
	\[\scrN = \union_{\text{$s$-sparse $J_1,J_2$}} \scrN_{J_1} \times \scrN_{J_2}.\]
	Clearly,
	\begin{align*}
		\sup_{\substack{\norm{u}_2 = \norm{v}_2 = 1\\ \norm{u}_0, \norm{v}_0 \leq s}} \ip{Zu}{v}
		&= \sup_{\text{$s$-sparse $J_1, J_2$}}\ \sup_{\substack{u \in S_{J_1} \\ v \in S_{J_2}}} \ip{Zu}{v} \\
		&\leq 2 \max_{(u, v) \in \scrN}~\ip{Zu}{v}.
	\end{align*}
	There are $\binom{p}{s} \leq \parens*{\frac{ep}{s}}^s$ $s$-sparse subspaces of $\R^p$,
	so $\abs{\scrN} \leq \parens*{9^s \parens*{\frac{ep}{s}}^s}^2$.
	
	By a union bound and substituting $\delta$ above with $\delta / \abs{\scrN}$, we then have, for any $\delta > 0$, with probability at least $1 - \delta$,
	\[
	\sup_{\substack{\norm{u}_2 = \norm{v}_2 = 1\\ \norm{u}_0, \norm{v}_0 \leq s}} \ip{Zu}{v}
	\lesssim  \sigma \sqrt{\frac{s \log (e p/s)}{n} + \frac{\log \delta^{-1}}{n}} + \frac{M \alpha^{\eta + 1}}{n^{1 - 1/\alpha}} \parens*{\frac{e p}{s}}^{2 s/\alpha} \delta^{-1/\alpha}.
	\]
	Taking $\delta = e^{-s} (s/p)^s$ and $\alpha \approx s \log \frac{C p}{s}$,
	we get, with probability at least $1 -  e^{-s} (s/p)^s$,
	\[
	\sup_{\substack{\norm{u}_2 = \norm{v}_2 = 1\\ \norm{u}_0, \norm{v}_0 \leq s}} \ip{Zu}{v}
	\lesssim \sigma \sqrt{\frac{s \log (e p/s)}{n}} + \frac{M}{n^{1-c}} \parens*{s \log \frac{ep}{s}}^{\eta + 1}.
	\] 
\end{proof}

We will need the following variant of \Cref{lem:emp_proc} for both the sparse PCA results and our restricted lower isometry lemma:
\begin{lemma}
	\label{lem:emp_proc_exp}
	Let $G_1, \dots, G_n$ be i.i.d.\ copies of a random matrix $G \in \R^{p \times p}$,
	where, for all $u, v \in \R^p$, $\ip{Gu}{v}$ has zero mean,
	\[
		\E \ip{G u}{v}^2 \lesssim \norm{u}_2^2 \norm{v}_2^2
	\]
	and $\ip{G u}{v}$ is sub-exponential in the sense that $\norm{\ip{G u}{v}}_\alpha \lesssim \alpha \norm{u}_2 \norm{v}_2$ for all $\alpha \geq 2$.
	
	Let
	\[
	Z = \frac{1}{n} \sum_{i=1}^n G_i
	\]
	For any integer $s \geq 1$, with probability at least $1-e^{-s}(s/p)^s$,
	\[
		\sup_{\normMixeds{A} \leq 1} \ipHS{Z}{A}
		\lesssim \sqrt{\frac{s \log (e p/s)}{n}} + \frac{s \log (e p/s)}{n}.
	\]
	Furthermore, for $n \gtrsim s \log \frac{ep}{s}$,
	\[
		\E \sup_{\normMixeds{A} \leq 1} \ipHS{Z}{A}
		\lesssim \sqrt{\frac{s \log (e p/s)}{n}}.
	\]
\end{lemma}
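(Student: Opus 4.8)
The plan is to follow the proof of \Cref{lem:emp_proc} almost verbatim, replacing the moment-based concentration inequality of \cite{Rio2017} with a Bernstein inequality for independent sub-exponential summands; the lighter tail is exactly what turns the polynomial-in-$n$ correction of \Cref{lem:emp_proc} into the benign additive $\frac{s \log(ep/s)}{n}$ term here.

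First, by \Cref{lem:norm_atomic_equiv} the unit ball $\{\normMixeds{A} \leq 1\}$ is contained in the convex hull of rank-$1$ $s$-sparse atoms, so it suffices to bound $\sup_{u,v} \ip{Zu}{v}$ over unit-norm, $s$-sparse $u, v$. For such a fixed pair, $\ip{Zu}{v} = \frac1n \sum_{i=1}^n \ip{G_i u}{v}$ is an average of i.i.d.\ zero-mean random variables whose variance proxy and sub-exponential scale are both $\lesssim 1$ (by the two hypotheses on $G$). Hence the Bernstein inequality for sums of independent sub-exponential random variables gives, for every $\delta > 0$, with probability at least $1 - \delta$,
\[
	\ip{Zu}{v} \lesssim \sqrt{\frac{\log \delta^{-1}}{n}} + \frac{\log \delta^{-1}}{n}.
\]

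Next I would run the same $1/4$-net covering argument as in \Cref{lem:emp_proc}: for every pair $J_1, J_2$ of $s$-sparse coordinate subspaces, take optimal $1/4$-nets $\scrN_{J_1}, \scrN_{J_2}$ of the respective unit spheres (each of cardinality $\leq 9^s$), so that the supremum over $S_{J_1} \times S_{J_2}$ is at most twice the maximum over $\scrN_{J_1} \times \scrN_{J_2}$. Since there are $\binom{p}{s}^2 \leq (ep/s)^{2s}$ such pairs, the union $\scrN$ of all $\scrN_{J_1} \times \scrN_{J_2}$ satisfies $\log \abs{\scrN} \lesssim s \log(ep/s)$. A union bound over $\scrN$ (replacing $\delta$ by $\delta / \abs{\scrN}$) then yields, with probability at least $1 - \delta$,
\[
	\sup_{\substack{\norm{u}_2 = \norm{v}_2 = 1 \\ \norm{u}_0, \norm{v}_0 \leq s}} \ip{Zu}{v}
	\lesssim \sqrt{\frac{s \log(ep/s) + \log \delta^{-1}}{n}} + \frac{s \log(ep/s) + \log \delta^{-1}}{n}.
\]
Taking $\delta = e^{-s}(s/p)^s$, so that $\log \delta^{-1} = s + s \log(p/s) \lesssim s \log(ep/s)$, gives the high-probability bound.

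For the expectation bound I would integrate this tail estimate. Writing $L \coloneqq s \log(ep/s)$, the union bound in fact shows that for every $t \geq 0$ the supremum exceeds $C\bigl(\sqrt{(L+t)/n} + (L+t)/n\bigr)$ with probability at most $e^{-t}$; integrating over $t$ (layer-cake, using $L \geq 1$) gives $\E \sup_{u,v} \ip{Zu}{v} \lesssim \sqrt{L/n} + L/n$, and since $n \gtrsim L$ forces $L/n \leq \sqrt{L/n}$, the square-root term dominates and the claimed bound follows. I do not anticipate any serious obstacle: this lemma is essentially \Cref{lem:emp_proc} with a sub-exponential rather than sub-Weibull tail. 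The only point needing a little care is checking that the tail estimate holds at all deviation levels $t$ (not merely $t \gtrsim s$) so that the layer-cake integral converges to the stated rate.
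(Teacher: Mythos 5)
Your proposal is correct and follows exactly the route the paper intends: the paper omits the proof of this lemma, stating only that it is ``nearly identical to the proof of Lemma~\ref{lem:emp_proc} with the Fuk--Nagaev inequality replaced by a Bernstein inequality,'' which is precisely the substitution you make, combined with the same reduction via Lemma~\ref{lem:norm_atomic_equiv} and the same covering/union-bound argument. Your layer-cake integration for the expectation bound is the standard way to fill in the step the paper leaves implicit, and it works as you describe since the tail estimate holds at every deviation level $t \geq 0$.
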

We omit the proof, as it is nearly identical to the proof of \Cref{lem:emp_proc}.
We simply replace the Fuk-Nagaev inequality with a Bernstein inequality.
With this, we can prove our restricted lower isometry lemma:
\begin{proof}[Proof of \Cref{lem:rsc}]
	If $X = x \otimes x$,
	by a straightforward calculation, for any $p \times p$ matrix $A$,
	\[
	\E \ipHS{X}{A}^2 = \sum_{i \neq j} A_{ii}A_{jj} \E(x^{(i)})^2 (x^{(j)})^2
	+ 2 \sum_{i \neq j} A_{ij}^2 \E(x^{(i)})^2(x^{(j)})^2
	+ \sum_i A_{ii}^2 \E (x^{(i)})^4.
	\]
	Using the facts that $\E (x^{(i)})^2 = 1$ for each $i$ and $x^{(i)}$ and $x^{(j)}$ are independent when $i \neq j$,
	we have
	\begin{align*}
		\E \ipHS{X}{A}^2 &= \sum_{i,j} A_{ii} A_{jj} +2 \sum_{i \neq j} A_{ij}^2 + \sum_i A_{ii}^2 (\E (x^{(i)})^4 - 1) \\
		&\geq (\tr A)^2 + \min\{2, \E (x^1)^4 - 1 \} \norm{A}_F^2 \\
		&\gtrsim \norm {A}_F^2.
	\end{align*}
	The last inequality uses the assumption that $\E (x^{(1)})^4 > 1$.
	
	By the Hanson-Wright inequality for sub-Gaussian vectors \cite{Rudelson2013},
	we have
	\[
	\E (\ipHS{X}{A}^2 - \E \ipHS{X}{A}^2)^2 \lesssim \norm{A}_F^4,
	\]
	so $\E \ipHS{X}{A}^4 \lesssim (\E \ipHS{X}{A}^2)^2$.
	By the Paley-Zygmund inequality, we then have, for some $c_1,c_2 > 0$,
	\[
	\inf_{A \in \R^{p \times p}} \P(\abs{\ipHS{X}{A}} \geq c_1 \norm{A}_F ) \geq c_2.
	\]
	The remainder of the proof is a small-ball argument (\cite{Mendelson2015}; see also \cite{Tropp2015a} for an excellent introduction).
	
	Let
	\[
	S = \{ A \in \R^{p \times p} : \norm{A}_F = 1;\ \normMixeds{A} \leq C \}.
	\]
	We will prove that
	\[
	\inf_{A \in S} \frac{1}{n} \sum_{i=1}^n \ipHS{X_i}{A}^2 \geq c
	\]
	with high probability for some constant $c > 0$.
	
	By \cite[Proposition 5.1]{Tropp2015a}, for any $t > 0$, we have, with probability at least $1 - e^{-t^2/2}$,
	\[
	\inf_{A \in S} \sqrt{\frac{1}{n} \sum_{i=1}^n \ipHS{X_i}{A}^2}
	\gtrsim c_1 c_2 - 2 \E \sup_{A \in S} \parens*{ \frac{1}{n} \sum_{i=1}^n \varepsilon_i \ipHS{X_i}{A} } - \frac{1}{\sqrt{n}} c_1 t,
	\]
	where $\varepsilon_1, \dots, \varepsilon_n$ are i.i.d.\ Rademacher random variables independent of everything else.
	
	Set $Z = \frac{1}{n} \sum_{i=1}^n \varepsilon_i X_i$, and note that $G_i = \varepsilon_i X_i$, $i = 1, \dots, n$, satisfy the requirements of \Cref{lem:emp_proc_exp}.
	Then
	\[
		\E \sup_{A \in S} \ipHS{Z}{A} \lesssim C \underline{}\sqrt{\frac{s \log (ep/s)}{n} }.
	\]
	Choosing $n$ large enough and $t = \sqrt{2 b n}$ for small enough $b > 0$ completes the proof.
	
\end{proof}

\section{Proof of sparse PCA error bound}
\newcommand{\projbperp}{\scrP_{(v_1 \otimes v_1)^\perp}}
\label{app:pca_proof}
\begin{proof}[Proof of \Cref{thm:pca}]
	By a similar argument to that in the proof of \Cref{thm:pr} in \Cref{sec:pr_proof},
	the solution $\Phat$ to \eqref{eq:pca_opt} satisfies
	\[
	\ipHS{\Sigmahat}{-H} \leq \lambda \ipHS{W}{-H}
	\]
	for $H = \Phat - P_1$ and any $W \in \partial \normMixeds{P_1}$.
	Choosing $W$ according to \Cref{lem:subgrad_ineq} (as in the proof of \Cref{thm:pr}),
	we obtain
	\[
	\ipHS{\Sigmahat}{H}
	\geq \lambda \parens*{ \frac{1}{10} \normMixeds{H} - 5 \norm{H}_F }.
	\]
	
	We first consider the difference between $\ipHS{\Sigmahat}{H}$ and $\ipHS{\Sigma}{H}$.
	Since the distribution of $\Sigmahat$ is independent of $\mu$,
	we assume, without loss of generality, that $\mu = 0$.
	We write $x_i = \Sigma^{1/2} z_i$, where $z_i \sim \normaldist(0, I_p)$,
	and $\Sigma^{1/2} = \sqrt{\sigma_1} P_1 + \Sigma_2^{1/2}$.
	We therefore want to bound
	\[
		\ipHS{\Sigmahat - \Sigma}{H} = \ipHS{\Sigma^{1/2} (Z - I_p - \zbr \otimes \zbr) \Sigma^{1/2}}{H},
	\]
	where $Z = \frac{1}{n} \sum_{i=1}^n z_i \otimes z_i$
	and $\zbr = \frac{1}{n} \sum_{i=1}^n z_i$.
	
	Let $H^\perp$ denote the component of $H$ orthogonal (in Hilbert-Schmidt inner product) to $P_1$.
	We have 
	\[
		H = \ipHS{H}{P_1} P_1 + H^\perp.
	\]
	First, for all $t \leq n$, with probability at least $1 - e^{-t}$,
	\begin{align*}
		\abs*{\ipHS{\Sigmahat - \Sigma}{P_1}}
		&= \sigma_1 \abs*{\frac{1}{n} \sum_{i=1}^n (\ip{z_i}{v_1}^2 - 1) - \ip{\zbr}{v_1}^2} \\
		&\leq \sigma_1 \abs*{\frac{1}{n} \sum_{i=1}^n (\ip{z_i}{v_1}^2 - 1) } + \ip{\zbr}{v_1}^2 \\
		&\lesssim \sigma_1 \parens*{\sqrt{\frac{t}{n}} + \frac{t}{n}} \\
		&\lesssim \sigma_1 \sqrt{\frac{t}{n}},
	\end{align*}
	where the second-to-last inequality follows from applying a Bernstein inequality to the sum and an ordinary Gaussian tail bound to the $\normaldist(0, 1/n)$ random variable $\ip{\zbr}{v_1}$.
	
	To analyze the remainder, denote the portion of $\Sigmahat$ orthogonal to $P_1$ as
	\begin{align*}
		\Sigmahat^\perp
		&= \Sigmahat - \ipHS{\Sigmahat}{P_1} P_1 \\
		&= \frac{1}{n} \sum_{i=1}^n \parens*{ \sqrt{\sigma_1} \ip{z_i}{v_1} ( v_1 \otimes (\Sigma_2^{1/2} z_i) + (\Sigma_2^{1/2} z_i) \otimes v_1 ) + (\Sigma_2^{1/2} z_i)^{\otimes 2} } - (\Sigma_2^{1/2} \zbr)^{\otimes 2}.
	\end{align*}
	Note that for each $i$, $\ip{v_1}{z_i}$ is independent of $\Sigma_2^{1/2} z_i$.
	By \Cref{lem:emp_proc_exp}, with probability at least $1 - 2 e^{-s} (s/p)^s$,
	\begin{align*}
		\sup_{\normMixeds{A} \leq 1} \ipHS{\Sigmahat^\perp + (\Sigma_2^{1/2} \zbr)^{\otimes 2} - \Sigma_2}{A}
		&\leq 2 \sup_{\normMixeds{A} \leq 1}~\ipHS*{ \frac{1}{n} \sum_{i=1}^n \sqrt{\sigma_1} \ip{z_i}{v_1} v_1 \otimes (\Sigma_2^{1/2} z_i) }{A} \\
		&\qquad+ \sup_{\normMixeds{A} \leq 1}~\ipHS*{ \frac{1}{n} \sum_{i=1}^n (\Sigma_2^{1/2} z_i)^{\otimes 2} - \Sigma_2}{A} \\
		&\lesssim (\sqrt{\sigma_1 \sigma_2} + \sigma_2) \parens*{ \sqrt{\frac{s \log (ep/s)}{n} } + \frac{s \log (ep/s)}{n} } \\
		&\lesssim \sqrt{\sigma_1 \sigma_2} \sqrt{\frac{s \log (ep/s)}{n} }.
	\end{align*}
	\Cref{lem:emp_proc_exp} also gives, with probability at least $1 - e^{-s} (s/p)^s$,
	\begin{align*}
		\sup_{\normMixeds{A} \leq 1}~\ipHS{(\Sigma_2^{1/2} \zbr)^{\otimes 2}}{A}
		&\leq \sup_{\normMixeds{A} \leq 1}~\ipHS{(\Sigma_2^{1/2} \zbr)^{\otimes 2} - \E (\Sigma_2^{1/2} \zbr)^{\otimes 2} }{A} + \sup_{\normMixeds{A} \leq 1}~\ipHS{\E (\Sigma_2^{1/2} \zbr)^{\otimes 2}}{A} \\
		&\lesssim \sigma_2 \sqrt{\frac{s \log (ep/s)}{n} }.
	\end{align*}
	Therefore,
	\[
		\sup_{\normMixeds{A} \leq 1}~\ipHS{\Sigmahat^\perp - \Sigma_2}{A}
		\lesssim \sqrt{\sigma_1 \sigma_2} \sqrt{\frac{s \log (ep/s)}{n} }
	\]
	with probability at least $1 - 3 e^{-s} (s/p)^s$.

	Let $\lambda$ be chosen with a large enough constant to ensure that on this event,
	\[
		\sup_{\normMixeds{A} \leq 1} \ipHS{\Sigmahat^\perp - \Sigma_2}{A} \leq \frac{\lambda}{10}.
	\]
	Then
	\[
		\abs{ \ipHS{\Sigmahat^\perp - \Sigma_2}{H} } \leq \frac{\lambda}{10} \normMixeds{H}.
	\]
	
	We then have
	\begin{align*}
		\sigma_1 \ipHS{P_1}{H} +\ipHS{\Sigma_2}{H}
		&= \ipHS{\Sigma}{H} \\
		&= \ipHS{\Sigmahat}{H} + \ipHS{\Sigma - \Sigmahat}{H} \\
		&\geq \lambda\parens*{\frac{1}{10} \normMixeds{H} - 5 \norm{H}_F} - \sigma_1 \sqrt{\frac{t}{n}} \abs{\ipHS{H}{P_1}} - \frac{\lambda}{10} \normMixeds{H} \\
		&= - 5\lambda \norm{H}_F - \sigma_1 \sqrt{\frac{t}{n}} \abs{\ipHS{H}{P_1}}.
	\end{align*}
	Note that
	\[
		\ipHS{H}{P_1} = \ipHS{\Phat}{P_1} - 1 \leq 0,
	\]
	and $\ipHS{\Sigma_2}{H} = \ipHS{\Sigma_2}{\Phat}$,
	so
	\begin{align*}
		\sigma_1 \parens*{ 1 - \sqrt{\frac{t}{n}} } (\ipHS{\Phat}{P_1} - 1) + \ipHS{\Phat}{\Sigma_2} \gtrsim - \lambda \norm{H}_F.
	\end{align*}
	
	Note that $\ipHS{\Phat}{\Sigma_2} \leq \sigma_2 \norm{\projTp(\Phat)}_*$,
	where $T^\perp$ is (similarly to before) the matrix subspace with rows and columns orthogonal to $v_1$.
	Note that $1 \geq \norm{\Phat}_* \geq \ipHS{\Phat}{P_1} + \norm{\projTp(\Phat)}_*$,
	so $\ipHS{\Phat}{\Sigma_2} \leq \sigma_2(1 - \ipHS{\Phat}{P_1})$.
	
	Combining this with the previous inequality and requiring $n \gtrsim \parens*{\frac{\sigma_1}{\sigma_1 - \sigma_2}}^2 t$,
	we have
	\[
		\parens*{ \sigma_1 - \sigma_2 } (1 - \ipHS{\Phat}{P_1})
		\lesssim \parens*{ \sigma_1 \parens*{1 - \sqrt{\frac{t}{n}}} - \sigma_2 } (1 - \ipHS{\Phat}{P_1})
		\lesssim \lambda \norm{H}_F.
	\]
	To bound $\norm{H}_F$, note that we can write
	\[
		\Phat = a v_1 \otimes v_1 + v_1 \otimes u + w \otimes v_1 + \scrP_{T^\perp}(\Phat),
	\]
	where $a = \ipHS{\Phat}{P_1}$ and $u,w \perp v_1$.
	Then
	\[
		1 \geq \norm{\Phat}_*^2 \geq \norm{\Phat}_F^2 = a^2 + \norm{u}_2^2 + \norm{w}_2^2 + \norm{\scrP_{T^\perp}(\Phat)}_F^2,
	\]
	and therefore
	\begin{align*}
		\norm{H}_F^2
		&= (1 - a)^2 + \norm{u}_2^2 + \norm{w}_2^2 + \norm{\scrP_{T^\perp}(\Phat)}_F^2 \\
		&\leq (1-a)^2 + 1 - a^2 \\
		&= 2(1 - a) \\
		&= 2(1 - \ipHS{\Phat}{P_1}).
	\end{align*}
	From this, we have $(\sigma_1 - \sigma_2) \norm{H}_F^2 \lesssim \lambda \norm{H}_F$,
	from which the result immediately follows.
\end{proof}

\section{Proof of Poisson variance/moment bounds}
\label{app:poisson}
If $x$ satisfies \Cref{assump:spr_meas} and, conditioned on $x$, $y \sim \poissondist(\ip{x}{\beta^*}^2)$,
then, for unit-norm $u \in \R^p$,
\begin{align*}
	\E \xi^2 \ip{x}{u}^4
	&= \E \brackets*{ \E[\xi^2 \given x] \ip{x}{u}^4 } \\
	&= \E \ip{x}{\beta^*}^2 \ip{x}{u}^4 \\
	&\lesssim \norm{\beta^*}_2^2.
\end{align*}
Also,
\begin{align*}
	\norm{\xi \ip{x}{u}^2}_\alpha
	&= \parens*{ \E\abs*{ \xi \ip{x}{u}^2 }^\alpha }^{1/\alpha} \\
	&= \parens*{\E \brackets*{ \E[ \abs{\xi}^\alpha \given x ] \abs{\ip{x}{u}}^{2\alpha} }}^{1/\alpha} \\
	&\lesssim \sqrt{\alpha} \parens*{\E  \abs{\ip{x}{\beta^*}}^\alpha \abs{\ip{x}{u}}^{2\alpha}}^{1/\alpha} + \alpha \norm{\ip{x}{u}^2}_\alpha \\
	&\lesssim \alpha^2( \norm{\beta^*}_2 + 1 ),
\end{align*}
where the first inequality uses the standard Poisson centered moment bound
\[
	\norm{Z - \E Z}_\alpha \lesssim \sqrt{\alpha \lambda} + \alpha 
\]
if $Z \sim \poissondist(\lambda)$.

	\printbibliography[heading=bibintoc]
\end{document}